\numberwithin{equation}{section}
\theoremstyle{plain}
\newtheorem{theorem}{Theorem}[section]
\newtheorem{lemma}[theorem]{Lemma}
\newtheorem{proposition}[theorem]{Proposition}
\newtheorem{corollary}[theorem]{Corollary}
\theoremstyle{definition}
\newtheorem{definition}[theorem]{Definition}
\newcommand{\prob}[1]{\textup{(}$P_{#1}$\textup{)}}
\newcommand{\fprob}[1]{\textup{(}$F_{#1}$\textup{)}}
\newcommand{\olambda}{\ensuremath{\bar{\lambda}}}
\newcommand{\eps}{\varepsilon}
\newcommand{\dC}{\mathbb{C}}
\newcommand{\dN}{\mathbb{N}}
\newcommand{\dR}{\mathbb{R}}
\newcommand{\dZ}{\mathbb{Z}}
\newcommand{\cH}{\mathcal{H}}
\newcommand{\cL}{\mathcal{L}}
\newcommand{\cN}{\mathcal{N}}
\newcommand{\cR}{\mathcal{R}}
\newcommand{\cT}{\mathcal{T}}
\newcommand{\rmD}{\mathrm{D}}
\newcommand{\rmd}{\mathrm{d}}
\newcommand{\rme}{\mathrm{e}}
\newcommand{\rmf}{\mathrm{f}}
\newcommand{\olu}{{\bar{u}}}
\newcommand{\wtu}{\tilde{u}}
\DeclareMathOperator{\opspan}{span}
\DeclareMathOperator{\codim}{codim}
\DeclareMathOperator{\Real}{Re}
\DeclareMathOperator{\Imag}{Im}
\DeclareMathOperator*{\wlim}{w-lim}
\newcommand{\rmloc}{\mathrm{loc}}
\newcommand{\rmess}{\mathrm{ess}}
\newcommand{\dint}{\,\rmd}
\newcommand{\ssm}{\backslash}
\newcommand{\weakto}{\rightharpoonup}
\newcommand{\lr}[3]{#1#3#2}
\newcommand{\xlr}[3]{\left#1#3\right#2}
\newcommand{\biglr}[3]{\bigl#1#3\bigr#2}
\newcommand{\Biglr}[3]{\Bigl#1#3\Bigr#2}
\newcommand{\bigglr}[3]{\biggl#1#3\biggr#2}
\newcommand{\abs}[1]{\lr\lvert\rvert{#1}}
\newcommand{\xabs}[1]{\xlr\lvert\rvert{#1}}
\newcommand{\bigabs}[1]{\biglr\lvert\rvert{#1}}
\newcommand{\Bigabs}[1]{\Biglr\lvert\rvert{#1}}
\newcommand{\norm}[1]{\lr\lVert\rVert{#1}}
\newcommand{\biggnorm}[1]{\bigglr\lVert\rVert{#1}}
\newcommand{\scp}[1]{\lr\langle\rangle{#1}}
\newcommand{\bigscp}[1]{\biglr\langle\rangle{#1}}
\newcommand{\coloneqq}{:=}
\newcommand{\loc}{\mathrm{loc}}
\begin{document}

\title{Unstable normalized standing waves for the space periodic
  NLS}

\author{Nils Ackermann\thanks{Supported by CONACYT grant 237661,
    UNAM-DGAPA-PAPIIT grant IN100718 and the program
    UNAM-DGAPA-PASPA (Mexico)}\and Tobias
  Weth} \date{}

\maketitle
\begin{abstract}
  For the stationary nonlinear Schrödinger equation
  $-\Delta u+ V(x)u- f(u) = \lambda u$ with periodic potential
  $V$ we study the existence and stability properties of
  multibump solutions with prescribed $L^2$-norm. To this end we
  introduce a new nondegeneracy condition and develop new
  superposition techniques which allow to match the
  $L^2$-constraint.  In this way we obtain the existence of
  infinitely many geometrically distinct solutions to the
  stationary problem.  We then calculate the Morse index of these
  solutions with respect to the restriction of the underlying
  energy functional to the associated $L^2$-sphere, and we show
  their orbital instability with respect to the Schrödinger
  flow. Our results apply in both, the mass-subcritical and the
  mass-supercritical regime.

  \textbf{Keywords:} Nonlinear Schrödinger equation; periodic
    potential; standing wave solution; orbitally unstable
    solution; multibump construction; prescribed norm

  \textbf{MSC:} 35J91, 35Q55; 35J20
\end{abstract}

\section{Introduction}
\label{sec:introduction}

Suppose that $N\in\dN$ and consider the stationary nonlinear
Schrödinger equation with prescribed $L^2$-norm
\begin{equation*}
  \tag*{\prob{\alpha}}
  -\Delta u+V(x)u- f(u)=\lambda u, \qquad u\in H^1(\dR^N), \qquad  \abs{u}_2^2=\alpha
\end{equation*}
which we will call the \emph{constrained} equation.  Here
$\abs{\cdot}_2$ denotes the standard $L^2$-norm,
$V\in L^\infty(\dR^N)$ is periodic in all coordinates, $f$ is a
superlinear nonlinearity of class $C^1$ with Sobolev-subcritical
growth, $\alpha>0$ is given, $u$ is the unknown weak solution and
$\lambda\in\dR$ is an unknown parameter.  Solutions to
Equation~\prob{\alpha} are standing wave solutions for the
time-dependent Schrödinger Equation modeling a Bose-Einstein
condensate in a periodic optical lattice \cite{MR2502397,
  RevModPhys.78.179, 0295-5075-63-5-642, 10.1038/nature01452,
  PhysRevA.67.013602, PhysRevA.67.063608, PhysRevLett.90.160407,
  PhysRevA.66.063605, RevModPhys.71.463}.  In this model $\alpha$
is proportional to the total number of atoms in the condensate.

Set
\begin{equation}
  \label{eq:def-Sigma-alpha}
  \Sigma_\alpha\coloneqq \xlr\{\}{u\in H^1(\dR^N)\mid \abs{u}_2^2=\alpha}
\end{equation}
for $\alpha>0$.  Define the functional
$\Phi\colon H^1(\dR^N)\to\dR$ by
\begin{equation}
  \label{eq:def-functional-phi}
  \Phi(u)\coloneqq
  \frac12\int_{\dR^N}(\abs{\nabla u}^2+Vu^2)
  -\int_{\dR^N}F(u),
\end{equation}
where we have set $F(s)\coloneqq\int_0^sf$.
Then the pair $(u,\lambda)$ is a weak solution of \prob{\alpha}
if and only if $u$ is a critical point of the restriction of
$\Phi$ to $\Sigma_{\alpha}$ with Lagrange multiplier $\lambda$.

Not assuming periodicity of $V$ but instead
$\sup_{\dR^N} V = \lim _{\abs{x}\to\infty}V(x)$, the existence of
a minimizer of $\Phi$ on $\Sigma_\alpha$ in the mass-subcritical
case was shown under additional assumptions on the growth of the
nonlinearity $f$ by Lions~\cite{MR778974}; see also
\cite{MR2796242} for a different approach.  For constant $V$
solutions of \prob{\alpha} are constructed in the
mass-supercritical case in \cite{MR1430506, MR3009665,
  MR3639521}; here the corresponding critical points of
$\Phi|_{\Sigma_\alpha}$ are not local minimizers.  In
\cite{MR3638314, 2014arXiv1410.4767B, MR1948875, MR1815766} local
minimizers are found in the mass-supercritical case under
spatially constraining potentials.

The structure of the solution set of the constrained equation is
rather poorly understood up to now in the case where
$V\in L^\infty(\dR^N)$ is not constant, but $1$-periodic in all
coordinates.  In contrast, a large amount of information is
available for the \emph{free} equation
\begin{equation*}
  -\Delta u+V(x)u= f(u),\qquad        u\in H^1(\dR^N),
\end{equation*}
where essentially the parameter $\lambda$ is fixed but the
$L^2$-norm is not prescribed anymore.  Of particular interest for
us are the results on the existence of so-called \emph{multibump
  solutions}. In~\cite{MR2485422, MR2491945, MR2216902,
  MR98b:58034, MR97j:58051, sprad:1995, MR94d:35044, MR2151860,
  MR93k:35087}, an infinite number of solutions are built using
nonlinear superposition of translates of a special solution which
satisfies a nondegeneracy condition of some form.

The main goal of the present work is to apply nonlinear
superposition techniques to the constrained problem with periodic
$V$ to obtain an infinity of $L^2$-normalized solutions in the
form of multibump solutions.  We succeed in doing this, but have
to impose a stricter nondegeneracy condition than in the case of
the free equation which nevertheless is fulfilled in many
situations.  This provides, as far as we know, the first result
on multibump solutions for the constrained problem, and also the
first multiplicity result in the case of a nonconstant periodic
potential $V$. We also compute the Morse index of these
normalized multibump solutions with respect to the restricted
functional $\Phi|_{\Sigma_{\alpha}}$, and we will use the Morse
index information to derive orbital instability of the multibump
solutions.

To state our results, we need the following hypotheses. We
consider, as usual, the critical Sobolev exponent defined by
$2^*:= \frac{2N}{N-2}$ in case $N \ge 3$ and $2^*:= \infty$ in
case $N=1,2$.

\begin{enumerate}[label=(H\arabic{*}),series=hypotheses]
\item \label{item:1} $V\in L^\infty(\dR^N)$;
\item \label{item:2} $V$ is $1$-periodic in all coordinates;
\item \label{item:3} $f\in C^1(\dR)$, $f(0)=f'(0)=0$,
  \begin{equation*}
    \lim_{s\to\infty}\frac{f'(s)}{\abs{s}^{2^*-2}}=0
  \end{equation*}
  if $N\ge3$, and there is $p>2$ such that
  \begin{equation*}
    \lim_{s\to\infty}\frac{f'(s)}{\abs{s}^{p-2}}=0
  \end{equation*}
  if $N=1$ or $N=2$.
\end{enumerate}
Throughout this paper we assume \ref{item:1} and \ref{item:3}.
It is well known that $\Phi$ is well defined by
\eqref{eq:def-functional-phi} and of class $C^2$.  The standard
example for a function satisfying \ref{item:3} is
$f(s)\coloneqq\abs{s}^{p-2}s$ with $p\in(2,2^*)$. In
the following, we let $H^{-1}(\dR^N)$ denote the topological dual
of $H^1(\dR^N)$.  For our main result, we need the notion of a
{\em fully nondegenerate} critical point of
$\Phi |_{\Sigma_\alpha}$.

\begin{definition}
  \th\label{def:fully-nondegenerate}
  Assume \ref{item:1} and \ref{item:3}. For $\alpha>0$, a
  critical point $u \in H^1(\dR^N)$ of $\Phi |_{\Sigma_\alpha}$
  with Lagrangian multiplier $\lambda$ will be called {\em fully
    nondegenerate} if for every $g \in H^{-1}(\dR^N)$ there
  exists a unique weak solution $z_g \in H^1(\dR^N)$ of the
  linearized equation
  \begin{equation}
    \label{eq:determining-equation}
    -\Delta  z_g +[V-\lambda] z_g - f'(u)z_g =g  \qquad \text{in $\dR^N$,}
  \end{equation}
  and if in the case $g=u$ we have
  $\int_{\dR^N} u\, z_u  \not = 0$.  Here, as usual, we
  regard $H^1(\dR^N)$ as a subspace of $H^{-1}(\dR^N)$, so
  $u \in H^{-1}(\dR^N)$.
\end{definition}

As we shall see in Section~\ref{sec:result-about-pert} below, the
full nondegeneracy of a critical point $u \in H^1(\dR^N)$ of
$\Phi |_{\Sigma_\alpha}$ with Lagrangian multiplier $\lambda$
implies the nondegeneracy of the Hessian of
$\Phi |_{\Sigma_\alpha}$ at $u$. By definition, this Hessian is
the bilinear form
\begin{equation}
  \label{eq:defi-quadratic}
  (v,w) \mapsto \int_{\dR^N} \Bigl(\nabla v \nabla w + [V-\lambda]vw - f'(u)v w\Bigr) 
\end{equation}
defined on the tangent space
\begin{equation*}
  T_{u}\Sigma_\alpha  = \{v \in H^1(\dR^N)\mid (v,u)_2 = 0\},
\end{equation*}
see \th\ref{def:various} below.  Here
$\lr(){\cdot,\cdot}_2$ denotes the standard scalar product in
$L^2(\dR^N)$.  We also need to fix the following elementary
notation. If $n\in\dN$ and $a=(a^1,a^2,\dots,a^n)\in(\dZ^N)^n$ is
a tuple of $n$ elements from $\dZ^N$, denote
\begin{equation*}
  d(a)\coloneqq\min_{i\neq j}\abs{a^i-a^j}.
\end{equation*}
Moreover, for $b \in \dR^N$ we denote by $\cT_b$ the associated
translation operator, i.e., for $u\colon \dR^N \to \dR$ the
function $\cT_b u\colon \dR^N \to \dR$ is given by
\begin{equation*}
  \cT_b u (x):= u(x-b),\qquad \text{for $x \in \dR^N$.}
\end{equation*}

Our first main result is the following.

\begin{theorem}[Multibump Solutions]
  \th\label{thm:two-bumps} Assume \ref{item:1}--\ref{item:3} and
  fix $\alpha>0$, $n\in\dN$, $n\ge 2$. Moreover, suppose that
  $\olu$ is a fully nondegenerate critical point of
  $\Phi|_{\Sigma_{\alpha/n}}$ with Lagrangian multiplier
  $\olambda$. Then for every $\varepsilon>0$ there exists
  $R_\varepsilon>0$ such that for every $a\in(\dZ^N)^n$ with
  $d(a)\ge R_\varepsilon$ there is a critical point $u_a$ of
  $\Phi|_{\Sigma_\alpha}$ with Lagrange multiplier $\lambda_a$
  such that
  \begin{equation*}
    \biggnorm{u_a-\sum_{i=1}^n \cT_{a^i} \olu}_{H^1(\dR^N)}\le\varepsilon
    \qquad\text{and}\qquad\abs{\lambda_a-\olambda}\le\varepsilon.
  \end{equation*}
  If $\varepsilon$ is chosen small enough then $u_a$ is unique.
  Moreover, if $\olu$ is a positive function and $f(\olu) \ge 0$
  on $\dR^N$, $f(\olu) \not \equiv 0$, then $u_a$ is positive as
  well.
\end{theorem}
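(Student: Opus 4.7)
The plan is to solve the equation $F=0$ near $(u_a^0,\bar\lambda)$ via a quantitative implicit function theorem, where
\[
F\colon H^1(\dR^N)\times\dR \to H^{-1}(\dR^N)\times\dR,\qquad
F(u,\lambda):=\bigl(-\Delta u+Vu-f(u)-\lambda u,\ \abs{u}_2^2-\alpha\bigr),
\]
and $u_a^0:=\sum_{i=1}^n\cT_{a^i}\bar u$. Zeros of $F$ are precisely pairs $(u,\lambda)$ with $u$ a critical point of $\Phi|_{\Sigma_\alpha}$ with Lagrange multiplier $\lambda$, so producing a zero of the form $(u_a^0+w_a,\bar\lambda+\mu_a)$ for small $(w_a,\mu_a)$ and setting $u_a:=u_a^0+w_a$, $\lambda_a:=\bar\lambda+\mu_a$ will deliver the theorem.

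I would first estimate the residual $F(u_a^0,\bar\lambda)$. By \ref{item:2} and $a^i\in\dZ^N$, each translate $\cT_{a^i}\bar u$ satisfies the unconstrained equation $-\Delta v+Vv-f(v)=\bar\lambda v$, so summing gives
\[
F(u_a^0,\bar\lambda)=\Bigl(\sum_{i=1}^n f(\cT_{a^i}\bar u)-f(u_a^0),\ 2\sum_{i<j}(\cT_{a^i}\bar u,\cT_{a^j}\bar u)_2\Bigr).
\]
Exponential decay of $\bar u$ (which holds because invertibility of the linearisation, a consequence of full nondegeneracy, places $\bar\lambda$ outside the essential spectrum of $-\Delta+V$) combined with the growth hypothesis \ref{item:3} on $f$ forces both components to tend to $0$ uniformly in the positions of the bumps as $d(a)\to\infty$.

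The main obstacle is establishing uniform invertibility of $DF(u_a^0,\bar\lambda)\colon H^1\times\dR\to H^{-1}\times\dR$ for $d(a)$ large. At a single bump the analogous map
\[
A\colon(v,\mu)\mapsto\bigl(-\Delta v+(V-\bar\lambda)v-f'(\bar u)v-\mu\bar u,\ 2(\bar u,v)_2\bigr)
\]
is a bijection by precisely the two clauses of \th\ref{def:fully-nondegenerate}: the first clause uniquely solves $Lv=g+\mu\bar u$ as $v=z_g+\mu z_{\bar u}$, and $\int_{\dR^N}\bar u\,z_{\bar u}\neq 0$ then fixes $\mu$ from the constraint line; the open mapping theorem promotes this to an isomorphism. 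To transfer to $DF(u_a^0,\bar\lambda)$ I would argue by contradiction: given $a_n$ with $d(a_n)\to\infty$ and $(v_n,\mu_n)$ of unit norm whose image tends to $0$ in $H^{-1}\times\dR$, localise $v_n$ via smooth cutoffs around each $a_n^i$, translate by $\cT_{-a_n^i}$, and extract weak limits $w^i\in H^1(\dR^N)$ and $\mu_\infty=\lim\mu_n$. The periodicity of $V$ and the exponential decay of $\bar u$ and $f'(\bar u)$ make cross-bump terms vanish in the limit, so each $(w^i,\mu_\infty)$ lies in the kernel of $A$ and hence $w^i=0$ and $\mu_\infty=0$; a partition-of-unity bound then upgrades weak to strong vanishing and contradicts $\|(v_n,\mu_n)\|=1$.

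With uniform invertibility and vanishing residual in hand, a standard Banach contraction argument solves $F(u_a^0+w,\bar\lambda+\mu)=0$ uniquely for $(w,\mu)$ whose size is controlled linearly by $\|F(u_a^0,\bar\lambda)\|_{H^{-1}\times\dR}$, yielding $u_a$ and $\lambda_a$ with the claimed $\varepsilon$-estimates as well as the asserted local uniqueness. Finally, for the positivity statement, $u_a^0>0$ on $\dR^N$ and $\|w_a\|_{H^1}\le\varepsilon$ keep $u_a^-$ small in $H^1$; testing $-\Delta u_a+(V-\lambda_a)u_a=f(u_a)$ against $u_a^-$ and using $f(0)=f'(0)=0$ to absorb the nonlinear contribution forces $u_a^-\equiv 0$, after which the strong maximum principle (applicable because $f(\bar u)\ge 0$, $f(\bar u)\not\equiv 0$ ensures $u_a\not\equiv 0$) yields $u_a>0$ everywhere on $\dR^N$.
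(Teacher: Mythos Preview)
Your approach is essentially the paper's: apply a quantitative Newton/contraction argument to the extended Lagrangian map (your $F$ is, up to a factor, the paper's $\nabla G_\alpha$), show the residual vanishes as $d(a)\to\infty$, and prove uniform invertibility of the linearisation by contradiction via translated weak limits; the positivity is also handled by testing against the negative part.

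One step needs correction. In your invertibility argument you assert that ``each $(w^i,\mu_\infty)$ lies in the kernel of $A$''. The first component does pass to the limit bump by bump, giving $\bar L w^i=\mu_\infty\bar u$ and hence $w^i=\mu_\infty z_{\bar u}$ for every $i$ (using invertibility of $\bar L$). The constraint line, however, does \emph{not} localise: from $(u_{a_n}^0,v_n)_2\to 0$ you only obtain the sum $\sum_i(\bar u,w^i)_2=0$, not $(\bar u,w^i)_2=0$ individually. The argument is rescued precisely by the second clause of full nondegeneracy: inserting $w^i=\mu_\infty z_{\bar u}$ into the sum gives $n\mu_\infty(\bar u,z_{\bar u})_2=0$, whence $\mu_\infty=0$ and then $w^i=0$. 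This is exactly how the paper proceeds (it tests against $\cT_{a_k^i}z_{\bar u}$ to isolate $\mu$ first), and it is the place where $(\bar u,z_{\bar u})_2\neq 0$ is genuinely used; your phrasing hides this.

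For the positivity, the absorption of $\int f(u_a^-)u_a^-$ only works against a coercive left-hand side, i.e.\ you need $\bar\lambda<\inf\sigma(-\Delta+V)$ (so that $\lambda_a$ is also below the spectrum for small $\varepsilon$). This is where the hypotheses $\bar u>0$, $f(\bar u)\ge 0$, $f(\bar u)\not\equiv 0$ actually enter: they force $\bar\lambda$ to be the simple ground-state eigenvalue of $-\Delta+V-f(\bar u)/\bar u$, strictly below $\inf\sigma(-\Delta+V)$. Your sketch uses these hypotheses only to ensure $u_a\not\equiv 0$, which is automatic since $u_a\in\Sigma_\alpha$.
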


The proof of \th\ref{thm:two-bumps} is based on a general
Shadowing Lemma, a simple consequence of Banach's Fixed
Point Theorem, applied to approximate zeros of the gradient of
the extended Lagrangian $G_\alpha$ for the constrained
variational problem on $\Sigma_\alpha$.  If $\olu$ is a
nondegenerate local minimum of $\Phi$ on $\Sigma_{\alpha/n}$ then
it is easy to see that the sum $\wtu$ of $n$ translates of $\olu$
is an approximate zero of $\nabla G_\alpha$ if these translates
are far enough apart from each other.  The Shadowing Lemma
implies that to obtain a zero of $\nabla G_\alpha$ near $\wtu$ it
is sufficient to prove that $\rmD^2 G_\alpha(\wtu)$ is invertible
and that the norm of its inverse is bounded appropriately. This
step is the main difficulty and requires the assumption of full
nondegeneracy of $\olu$.
 
Our next result is concerned with the Morse index of the
solutions $u_a$ given in \th\ref{thm:two-bumps} with respect
to the functional $\Phi|_{\Sigma_\alpha}$. For this we recall
that the Morse index $m(u)$ of a critical point $u$ of
$\Phi|_{\Sigma_\alpha}$ with Lagrangian multiplier $\lambda$ is
defined as the maximal dimension of a subspace
$W \subset T_{u}\Sigma_\alpha$ such that the quadratic form in
\eqref{eq:defi-quadratic} is negative definite on $W$. If such
a maximal dimension does not exist, one sets $m(u)=\infty$.  We
also introduce the following additional assumption.
\begin{enumerate}[resume*=hypotheses]
\item \label{item:7} $f(s)/\abs{s}$ is nondecreasing in $\dR$ and
  $f(s)s>0$ for all $s\neq0$.
\end{enumerate}

\begin{theorem}
  \th\label{thm:two-bumps-morse-index} Assume
  \ref{item:1}--\ref{item:3}, fix $\alpha>0$, $n\in\dN$,
  $n\ge 2$, and suppose that $\olu$ is a fully nondegenerate
  critical point of $\Phi|_{\Sigma_{\alpha/n}}$ with Lagrangian
  multiplier $\olambda$ and finite Morse index
  $m(\olu)$. Moreover, let $z_\olu$ be given as in
  \th\ref{def:fully-nondegenerate} with $u=\olu$.  Then the
  critical points $u_a$ found in \th\ref{thm:two-bumps} have, for
  small $\varepsilon$, the following Morse index $m(u_a)$ with
  respect to $\Phi|_{\Sigma_\alpha}$:
  \begin{equation*}
    m(u_a) = \left \{
      \begin{aligned}
        &n (m(\olu)+1)-1 &&\qquad \text{if }
          \lr(){\olu, z_\olu}_2 < 0;\\
        &n m(\olu)  &&\qquad \text{if }\lr(){\olu, z_\olu}_2 > 0.
      \end{aligned}
    \right.
  \end{equation*}
  If moreover \ref{item:7} holds true, then $m(u_a)>0$.
\end{theorem}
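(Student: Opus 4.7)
The plan is to relate the constrained Morse index $m(u_a)$ to the \emph{unconstrained} Morse index of the linearized operator, exploiting full nondegeneracy. For any fully nondegenerate critical point $u$ of $\Phi|_{\Sigma_{\abs{u}_2^2}}$ with multiplier $\lambda$, let $L_u\colon H^1(\dR^N)\to H^{-1}(\dR^N)$ be the operator on the left-hand side of \eqref{eq:determining-equation}, and $q_u$ its quadratic form. Full nondegeneracy gives $z_u = L_u^{-1}u$, satisfying $q_u(z_u,v) = \lr(){u,v}_2$ for all $v\in H^1(\dR^N)$; equivalently, $H^1(\dR^N) = T_u\Sigma_{\abs{u}_2^2}\oplus \dR z_u$ is a $q_u$-orthogonal direct sum (the sum being direct because $\lr(){u,z_u}_2\neq 0$). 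Counting negative directions in each summand yields
\begin{equation*}
  m^-(q_u) = m(u) + \epsilon(u), \qquad
  \epsilon(u) \coloneqq \begin{cases} 0 & \text{if } \lr(){u,z_u}_2 > 0,\\ 1 & \text{if } \lr(){u,z_u}_2 < 0, \end{cases}
\end{equation*}
where $m^-(q_u)$ denotes the maximal dimension of a subspace of $H^1(\dR^N)$ on which $q_u$ is negative definite.

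\textbf{Step 2 (localization of the unconstrained index).} I would next prove $m^-(q_{u_a}) = n\,m^-(q_\olu)$ for $\varepsilon$ small. Since $m(\olu)<\infty$, the parameter $\olambda$ lies strictly below the essential spectrum of $-\Delta+V-f'(\olu)$, so negative eigenfunctions of $L_\olu$ decay exponentially; any $m^-(q_\olu)$-dimensional negative subspace $W$ for $q_\olu$ can be taken to consist of such eigenfunctions. The translated subspaces $\cT_{a^i}W$ are then nearly $q_{u_a}$-orthogonal across distinct $i$ (the interaction terms decay with $d(a)$), and each remains $q_{u_a}$-negative by continuity, so their sum is an $n\,m^-(q_\olu)$-dimensional negative subspace. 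The converse inequality is the main obstacle: one splits an arbitrary $v\in H^1(\dR^N)$ by an IMS-type partition of unity adapted to the sites $a^i$ and a complementary far-field region, replaces each near-field piece by its translate to compare with $q_\olu$, and controls the far-field contribution by the positive lower bound on the essential spectrum. The delicate point is to make the localization errors uniformly negligible as $d(a)\to\infty$.

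\textbf{Step 3 (sign preservation and conclusion).} Because $L_{u_a}$ is a small $H^1\!\to\!H^{-1}$ perturbation of the invertible $L_\olu$, it is invertible for $\varepsilon$ small, so $z_{u_a}\coloneqq L_{u_a}^{-1}u_a$ is defined. Setting $w\coloneqq\sum_{i=1}^n \cT_{a^i}z_\olu$, the periodicity of $V$ gives $\cT_{a^i}L_\olu z_\olu = \cT_{a^i}\olu$, so
\begin{equation*}
  L_{u_a}w - u_a = \sum_{i=1}^n\bigl[(\olambda-\lambda_a)\cT_{a^i}z_\olu + (f'(\cT_{a^i}\olu)-f'(u_a))\cT_{a^i}z_\olu\bigr] + \Bigl(\sum_i\cT_{a^i}\olu - u_a\Bigr) \to 0
\end{equation*}
in $H^{-1}(\dR^N)$ as $d(a)\to\infty$. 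Hence $z_{u_a}\to w$ in $H^1(\dR^N)$, and the off-diagonal terms in $\lr(){u_a,z_{u_a}}_2$ vanish by decay, giving $\lr(){u_a,z_{u_a}}_2 \to n\,\lr(){\olu,z_\olu}_2$. In particular $u_a$ is fully nondegenerate for $\varepsilon$ small and $\epsilon(u_a)=\epsilon(\olu)$. Combining with Steps~1 and~2,
\begin{equation*}
  m(u_a) = m^-(q_{u_a}) - \epsilon(u_a) = n\bigl(m(\olu)+\epsilon(\olu)\bigr) - \epsilon(\olu) = n\,m(\olu) + (n-1)\,\epsilon(\olu),
\end{equation*}
which reproduces the two cases in the statement.

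\textbf{Step 4 (positivity under \ref{item:7}).} When $\lr(){\olu,z_\olu}_2<0$ the formula already gives $m(u_a)\ge n-1\ge 1$. In the complementary case I would use \ref{item:7}: the monotonicity of $f(s)/\abs{s}$ together with $f(0)=f'(0)=0$ implies $f'(s)s^2\ge f(s)s$, strictly on $\{\olu\neq 0\}$, so testing $-\Delta\olu+V\olu-f(\olu)=\olambda\olu$ against $\olu$ yields $q_\olu(\olu,\olu) = \int_{\dR^N}(\olu f(\olu)-f'(\olu)\olu^2) < 0$. If both $m(\olu)=0$ and $\lr(){\olu,z_\olu}_2>0$ held, then $q_\olu$ would be positive on each summand of the Step~1 decomposition and hence on all of $H^1(\dR^N)$, contradicting $q_\olu(\olu,\olu)<0$. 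Therefore $m(\olu)\ge 1$, and $m(u_a)=n\,m(\olu)\ge n>0$.
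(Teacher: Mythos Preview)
Your overall architecture matches the paper's: reduce to the free Morse index via the $q_u$-orthogonal decomposition $H^1=T_u\Sigma\oplus\dR z_u$ (this is the paper's Lemma~2.7), prove $m^-(q_{u_a})=n\,m^-(q_{\olu})$ (the paper's Lemma~4.2), check that the sign of $(u_a,z_{u_a})_2$ agrees with that of $(\olu,z_\olu)_2$ (the paper's Lemma~4.3), and combine. Step~4 is also the paper's argument via $q_{\olu}(\olu,\olu)<0$.

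There is one genuine error in Step~3: the sentence ``$L_{u_a}$ is a small $H^1\!\to\!H^{-1}$ perturbation of the invertible $L_\olu$'' is false. The potential $f'(u_a)$ carries $n$ well-separated bumps, so $L_{u_a}-L_\olu$ contains $n-1$ bump contributions and is \emph{not} small in operator norm. What is true is that $L_{u_a}$ is a small perturbation of the multibump linearization $-\Delta+V-\olambda-f'(\sum_i\cT_{a^i}\olu)$, whose invertibility (with uniformly bounded inverse) is not obvious a priori; in the paper it is obtained precisely as a by-product of the free Morse index computation in Step~2 (the quadratic form is uniformly negative on an $n\,m^-(q_\olu)$-dimensional space and uniformly positive on its orthogonal complement). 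So your Step~3 should quote Step~2 for invertibility, not perturbation off $L_\olu$.

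On methods: your Step~2 proposes IMS localization; the paper instead uses the BL-splitting of $\rmD^2\Psi$ together with compactness of $\rmD^2\Psi(\olu)$ and a weak-limit argument on $Z_k^\perp$ (where $Z_k=\sum_i\cT_{a^i_k}Z$ and $Z$ is the negative eigenspace of $B$). Both should work, but the paper's route avoids cutoff-error bookkeeping. In Step~3 you assert strong convergence $z_{u_a}\to\sum_i\cT_{a^i}z_\olu$; the paper proves only the weak statement $\cT_{-a^i_k}z_{u_k}\rightharpoonup z_\olu$ for each $i$, which is exactly what is needed to pass to the limit in $(u_a,z_{u_a})_2$ and is easier to justify.
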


The key r\^ole of the sign of the scalar product
$\lr(){\olu, z_\olu}_2$ in this theorem is not surprising since
it is closely related to variational properties of the underlying
critical point $\olu$. More precisely, we shall see in
\th\ref{lem:simple-but-imp} below that it determines the
relationship between the Morse index of $\olu$ with respect to
$\Phi|_{\Sigma_{\alpha/n}}$ and its {\em free} Morse index with
respect to the functional
$u \mapsto \Phi(u)- \olambda \abs{u}_2^2$ on $H^1(\dR^N)$.
 
We now consider the special case where \ref{item:7} holds true and
$\olu$ is a nondegenerate local minimum of
$\Phi|_{\Sigma_{\alpha/n}}$.  By a nondegenerate local minimum we
mean a critical point $\olu$ of $\Phi|_{\Sigma_{\alpha/n}}$ with
Lagrangian multiplier $\olambda$ such that the quadratic form in
\eqref{eq:defi-quadratic} is positive definite on
$T_{u}\Sigma_{\alpha/n}$.
In this case, we shall see in Section~\ref{sec:result-about-pert}
below that $\olu$ is fully nondegenerate, and we will deduce the
following corollary from \th\ref{thm:two-bumps,thm:two-bumps-morse-index} in
Section~\ref{sec:morse-index-nond}.

\begin{corollary}
  \th\label{thm:two-bumps-local-min} Assume
  \ref{item:1}--\ref{item:7} and fix $\alpha>0$, $n\in\dN$,
  $n\ge 2$. Moreover, suppose that $\olu$ is a nondegenerate
  local minimum of $\Phi|_{\Sigma_{\alpha/n}}$ with Lagrangian
  multiplier $\olambda$.  Then for every $\varepsilon>0$ there
  exists $R_\varepsilon>0$ such that for every $a\in(\dZ^N)^n$
  with $d(a)\ge R_\varepsilon$ there is a critical point
  $u_a$ of $\Phi|_{\Sigma_\alpha}$ with Lagrange multiplier
  $\lambda_a$ such that
  \begin{equation*}
    \biggnorm{u_a-\sum_{i=1}^n \cT_{a^i} \olu}_{H^1(\dR^N)}\le\varepsilon
    \qquad\text{and}\qquad\abs{\lambda_a-\olambda}\le\varepsilon.
  \end{equation*}
  If $\varepsilon$ is chosen small enough then $u_a$ is unique.
  Moreover, $u_a$ does not change sign and has Morse index
  $m(u_a) \!=\! n\!-\!1$ with respect to $\Phi|_{\Sigma_\alpha}$.
\end{corollary}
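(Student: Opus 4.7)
The plan is to deduce the corollary from \th\ref{thm:two-bumps} and \th\ref{thm:two-bumps-morse-index} by verifying their hypotheses for a nondegenerate local minimum $\olu$ and then pinning down the sign of $\lr(){\olu, z_\olu}_2$. The first hypothesis, namely that $\olu$ is fully nondegenerate in the sense of \th\ref{def:fully-nondegenerate}, is supplied by the statement from Section~\ref{sec:result-about-pert} announced in the excerpt, which says that any nondegenerate local minimum is fully nondegenerate. With this in hand, \th\ref{thm:two-bumps} immediately yields the existence, uniqueness and proximity estimates for the critical points $u_a$.

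The core of the argument is the identification of the sign of $\lr(){\olu, z_\olu}_2$, which is needed to invoke \th\ref{thm:two-bumps-morse-index}. Since $\olu$ is a local minimum we have $m(\olu)=0$, so it suffices to rule out $\lr(){\olu, z_\olu}_2>0$ (which would force $m(u_a)=0$) and to establish $\lr(){\olu, z_\olu}_2<0$ (yielding $m(u_a)=n(m(\olu)+1)-1=n-1$). The strategy is to compare the constrained Morse index $m(\olu)=0$ with the \emph{free} Morse index of $\olu$ as a critical point of $u\mapsto \Phi(u)-\frac{\olambda}{2}\abs{u}_2^2$ on $H^1(\dR^N)$. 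To bound the free Morse index from below, I would test the associated self-adjoint operator $L\coloneqq-\Delta+V-\olambda-f'(\olu)$ against $\olu$ itself: using the Euler equation $-\Delta\olu+V\olu-\olambda\olu=f(\olu)$ one obtains
\begin{equation*}
  \lr(){L\olu,\olu}_2=\int_{\dR^N}\bigl(f(\olu)\olu-f'(\olu)\olu^2\bigr),
\end{equation*}
and the pointwise inequality $f(s)s\le f'(s)s^2$ that follows from the monotonicity of $s\mapsto f(s)/\abs{s}$ in \ref{item:7} (with strict inequality on a set of positive measure, since $f(s)/\abs{s}\to 0$ as $s\to 0$ by \ref{item:3} while $\olu\not\equiv 0$ has values away from zero) makes this quantity strictly negative. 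Hence the free Morse index of $\olu$ is at least $1$. Since \th\ref{lem:simple-but-imp} forces the free Morse index to equal either $m(\olu)$ or $m(\olu)+1$ depending on the sign of $\lr(){\olu,z_\olu}_2$, the only possibility compatible with $m(\olu)=0$ is $\lr(){\olu,z_\olu}_2<0$. Applying \th\ref{thm:two-bumps-morse-index} then gives $m(u_a)=n-1$.

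The remaining point is the sign property of $u_a$. Under \ref{item:7}, any nondegenerate local minimum $\olu$ of $\Phi|_{\Sigma_{\alpha/n}}$ may be assumed to be positive: passing to $\abs{\olu}$ does not increase the value of $\Phi$, and the strong maximum principle applied to the Euler equation upgrades $\olu\ge 0$ to $\olu>0$ on $\dR^N$. Because $f(s)s>0$ forces $f(\olu)>0$ pointwise where $\olu>0$, the positivity part of \th\ref{thm:two-bumps} yields $u_a>0$, so in particular $u_a$ does not change sign. The principal obstacle in this program is the sign determination of $\lr(){\olu,z_\olu}_2$; the remaining assertions are essentially formal consequences of \th\ref{thm:two-bumps}, \th\ref{thm:two-bumps-morse-index} and \th\ref{lem:simple-but-imp}.
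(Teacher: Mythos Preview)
Your approach mirrors the paper's: invoke full nondegeneracy of $\olu$ (this is \th\ref{lem:simple-but-imp-1} in the paper), apply \th\ref{thm:two-bumps}, determine the sign of $\lr(){\olu,z_\olu}_2$ by comparing the constrained Morse index $m(\olu)=0$ with the free Morse index via \th\ref{lem:simple-but-imp}, and then apply \th\ref{thm:two-bumps-morse-index}. Your computation $\lr(){L\olu,\olu}_2<0$ is exactly the content of the paper's \th\ref{lem:h-5-consequence}, so the sign determination is correct.

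The one genuine gap is your argument that $\olu$ does not change sign. The claim ``passing to $\abs{\olu}$ does not increase the value of $\Phi$'' is not justified: oddness of $f$ is \emph{not} among the hypotheses \ref{item:1}--\ref{item:7}, so there is no reason for $F(\abs{\olu})\ge F(\olu)$ pointwise. Even if the inequality $\Phi(\abs{\olu})\le\Phi(\olu)$ held, $\abs{\olu}$ need not lie in an $H^1$-neighbourhood of $\olu$, so the \emph{local}-minimum property of $\olu$ yields no contradiction; nor is it clear that $\abs{\olu}$ is itself a nondegenerate critical point to which the theorems could be applied instead. The paper's argument (inside \th\ref{lem:simple-but-imp-1}) is Morse-theoretic and reuses the very computation you already carried out: if $\olu$ changed sign, then testing the Hessian form against $\olu^+$ and $\olu^-$ separately --- using the Euler equation and \ref{item:7} exactly as in your estimate for $\olu$ --- shows that $\scp{B\cdot,\cdot}$ is negative definite on the two-dimensional space $\opspan(\olu^+,\olu^-)$. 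Since this space meets the codimension-one tangent space $T_{\olu}\Sigma_{\alpha/n}$ nontrivially, one would obtain $m(\olu)\ge 1$, contradicting $m(\olu)=0$. With $\olu>0$ (after possibly replacing $\olu$ by $-\olu$) and $f(\olu)>0$ from \ref{item:7}, the positivity clause of \th\ref{thm:two-bumps} then applies as you indicate.
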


Next we present an example where the nondegeneracy hypotheses of
the previous theorems can be verified.  For this we make the
following assumptions.

\begin{enumerate}[resume*=hypotheses]
\item\label{item:4} $V\in C^2(\dR^N)$ is $1$-periodic in all
  coordinates, positive, and has a nondegenerate critical point at
  some point $x_0 \in \dR^N$.
\item\label{item:5} $f(s)= \abs{s}^{p-2}s$ for some
  $p \in (2,2^*) \ssm \{2+\frac{4}{N}\}.$
\end{enumerate}
We then consider the constrained singularly perturbed equation
\begin{equation*}
  \tag*{\prob{\alpha,\varepsilon}}
  -\varepsilon^2\Delta u+V(x)u- \abs{u}^{p-2}u=\lambda u, \qquad         u\in H^1(\dR^N), \qquad        \abs{u}_2^2=\alpha
\end{equation*}
in the semiclassical limit $\varepsilon\to0$.  Its weak solutions
correspond, for each $\varepsilon>0$, to critical points and
Lagrange multipliers of the restriction of the functional
\begin{equation*}
  \Phi_\varepsilon\colon H^1(\dR^N)\to\dR,\qquad  \Phi_\varepsilon(u)
  \coloneqq   \frac12\int_{\dR^N}(\varepsilon^2\abs{\nabla u}^2+Vu^2)
  -\frac1p\int_{\dR^N}\abs{u}^p
\end{equation*}
to $\Sigma_\alpha$. We also consider the related free problem
\begin{equation}
  \label{eq:38}\tag*{\fprob{\varepsilon}}
  -\varepsilon^2\Delta u+V(x)u= \abs{u}^{p-2}u,\qquad
  u\in H^1(\dR^N),
\end{equation}
whose weak solutions coincide with critical points of
$\Phi_\varepsilon$, for every $\varepsilon>0$.  It is well known
(see \cite{MR1956951}) that there exists a locally unique curve
of solutions of \fprob{\varepsilon} that concentrate near $x_0$
as $\varepsilon\to0$.  For our purposes we need to show
additional properties of these solutions.
\begin{theorem}
  \th\label{teo:semiclassical-existence} Assume \ref{item:4} and
  \ref{item:5}. Then there exist $\varepsilon_0>0$ and a
  continuous map $(0,\varepsilon_0)\to H^1(\dR^N)$,
  $\varepsilon\to \olu_\varepsilon$, such that the following
  properties hold true:
  \begin{enumerate}[label=\textup{(\roman*)}]
  \item \label{item:9} for each $\varepsilon\in(0,\varepsilon_0)$
    the function $\olu_\varepsilon$ is a positive solution of
    \ref{eq:38};
  \item \label{item:11} as $\varepsilon \to 0$, the functions
    $x \mapsto \olu_\varepsilon$ concentrates near $x_0$ in the
    sense that the functions $x \mapsto \olu_\eps(x_0 + \eps x)$
    converge in $H^1(\dR^N)$ to the unique radial positive
    solution $u_0 \in H^1(\dR^N)$ of the equation
    $-\Delta u_0 + V(x_0)u_0 = u_0^{p-1}$ in $\dR^N$;
  \item \label{item:12} $\abs{\olu_\varepsilon}_2^2\to0$ as
    $\varepsilon\to0$;
  \item \label{item:10} for each
    $\varepsilon\in(0,\varepsilon_0)$ the function
    $\olu_\varepsilon$ is a fully nondegenerate critical point of
    the restriction of $\Phi_{\varepsilon}$ to
    $\Sigma_{\abs{\olu_\varepsilon}_2^2}$ with Morse index
    \begin{equation}
      \label{eq:morse-index-sing-perturbed}
      m(\olu_\varepsilon) = \left \{
        \begin{aligned}
          &m_V &&\quad \text{if $2<p < 2 + \frac{4}{N}$,}\\
          &m_V+1 &&\quad \text{if $2 + \frac{4}{N}<p<2^*$.}
        \end{aligned}
      \right.
    \end{equation}
    Here $m_V$ denotes the number of negative eigenvalues of the
    Hessian of $V$ at $x_0$.
  \end{enumerate}
\end{theorem}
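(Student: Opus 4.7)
The plan is to combine the semiclassical Lyapunov--Schmidt construction with a careful analysis of the linearised operator along the concentrating branch. Items~\ref{item:9} and~\ref{item:11} are essentially contained in the results of Ambrosetti and Malchiodi~\cite{MR1956951}: at the nondegenerate critical point $x_0$ of the positive $C^2$ periodic potential $V$ one obtains a continuous branch $\varepsilon\mapsto \olu_\varepsilon$ of positive solutions of \ref{eq:38} such that, after the rescaling $y=(x-x_0)/\varepsilon$, the functions $\wtu_\varepsilon(y):=\olu_\varepsilon(x_0+\varepsilon y)$ converge in $H^1(\dR^N)$ to the unique radial positive solution $u_0$ of $-\Delta u_0+V(x_0)u_0=u_0^{p-1}$. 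Item~\ref{item:12} is then immediate from the change of variables $x=x_0+\varepsilon y$, which yields $\abs{\olu_\varepsilon}_2^2 = \varepsilon^N\abs{\wtu_\varepsilon}_2^2 \to 0$.

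For item~\ref{item:10} I observe that, since $\olu_\varepsilon$ solves the free equation, it is a critical point of $\Phi_\varepsilon|_{\Sigma_{\abs{\olu_\varepsilon}_2^2}}$ with Lagrangian multiplier $\lambda_\varepsilon=0$, and its linearised operator is $L_\varepsilon z = -\varepsilon^2 \Delta z + V z - (p-1)\olu_\varepsilon^{p-2} z$. Under the same rescaling $L_\varepsilon$ is unitarily equivalent to $\wtL_\varepsilon w(y) = -\Delta w + V(x_0+\varepsilon y)w - (p-1)\wtu_\varepsilon^{p-2}w$, with formal limit $L_0 = -\Delta + V(x_0) - (p-1)u_0^{p-2}$. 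The operator $L_0$ is self-adjoint on $L^2(\dR^N)$ with $\ker L_0=\opspan\{\partial_{y_1}u_0,\dots,\partial_{y_N}u_0\}$ (classical nondegeneracy of the radial ground state) and Morse index one, the negative eigenspace being spanned by a radial function. A standard Lyapunov--Schmidt reduction splits $H^1(\dR^N)$ into the approximate kernel $K_\varepsilon:=\opspan\{\partial_{x_i}\olu_\varepsilon:i=1,\dots,N\}$ and its orthogonal complement. On the complement $\wtL_\varepsilon$ is uniformly bounded below away from zero, and on $K_\varepsilon$ it is represented, after appropriate normalisation, by a symmetric $N\times N$ matrix whose leading order is $\varepsilon^2 M$, where $M$ has the same signature as $\rmD^2V(x_0)$; this is obtained by expanding $V(x_0+\varepsilon y)-V(x_0)\approx\tfrac{\varepsilon^2}{2}y^\top\rmD^2V(x_0)y$ and exploiting the symmetries of the radial profile $u_0$. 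Since $\rmD^2V(x_0)$ is nondegenerate with $m_V$ negative eigenvalues, $L_\varepsilon\colon H^1(\dR^N)\to H^{-1}(\dR^N)$ is an isomorphism (proving the existence and uniqueness of $z_g$ in \th\ref{def:fully-nondegenerate}), and its free Morse index equals $1+m_V$ for all small $\varepsilon$.

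It remains to compute the sign of $\lr(){\olu_\varepsilon,z_{\olu_\varepsilon}}_2$. Rescaling $L_\varepsilon z_{\olu_\varepsilon}=\olu_\varepsilon$ gives $\wtL_\varepsilon w_\varepsilon=\wtu_\varepsilon$. Since $u_0$ is radial it is $L^2$-orthogonal to the odd functions $\partial_i u_0$, so it lies in the range of $L_0$ and $w_\varepsilon\to w_0$ in $H^1(\dR^N)$, where $L_0 w_0=u_0$ with $w_0\in(\ker L_0)^\perp$. To compute $\lr(){u_0,w_0}_2$ I introduce the one-parameter family $u_\mu(y):=\mu^{1/(p-2)}u_1(\sqrt{\mu}\,y)$ of solutions to $-\Delta u+\mu u=u^{p-1}$; by construction $u_{V(x_0)}=u_0$. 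Differentiating the equation in $\mu$ yields $L_0\partial_\mu u_\mu=-u_\mu$, whence $w_0=-\partial_\mu u_\mu|_{\mu=V(x_0)}$ and
\begin{equation*}
  \lr(){u_0,w_0}_2 = -\tfrac{1}{2}\,\tfrac{\rmd}{\rmd\mu}\abs{u_\mu}_2^2\Big|_{\mu=V(x_0)}.
\end{equation*}
The explicit formula $\abs{u_\mu}_2^2=\mu^{2/(p-2)-N/2}\abs{u_1}_2^2$ shows that the right-hand side has the sign of $\tfrac{N}{2}-\tfrac{2}{p-2}$, so $\lr(){u_0,w_0}_2<0$ when $p<2+\tfrac{4}{N}$ and $\lr(){u_0,w_0}_2>0$ when $p>2+\tfrac{4}{N}$. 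Since $\lr(){\olu_\varepsilon,z_{\olu_\varepsilon}}_2=\varepsilon^N\lr(){u_0,w_0}_2+o(\varepsilon^N)$, the same dichotomy persists for small $\varepsilon$; in particular this inner product is nonzero, completing the proof of full nondegeneracy.

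Applying \th\ref{lem:simple-but-imp} to pass from the free Morse index $1+m_V$ to the constrained Morse index then gives $m(\olu_\varepsilon)=m_V$ in the mass-subcritical regime and $m(\olu_\varepsilon)=m_V+1$ in the mass-supercritical regime, as required by~\eqref{eq:morse-index-sing-perturbed}. The main technical difficulty lies in the Lyapunov--Schmidt step: one must justify rigorously that the reduced $N\times N$ matrix inherits the signature of $\rmD^2V(x_0)$ and that $\wtL_\varepsilon$ is uniformly invertible on the orthogonal complement, which is exactly where the nondegeneracy of $x_0$ as a critical point of $V$ is used.
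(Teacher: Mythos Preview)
Your proposal follows essentially the same route as the paper: items~\ref{item:9}--\ref{item:11} are taken from the literature (the paper cites Grossi~\cite{MR1956951}, not Ambrosetti--Malchiodi, though the content is the same), item~\ref{item:12} is the obvious rescaling, the sign of $(\olu_\varepsilon,z_{\olu_\varepsilon})_2$ is computed via the scaling family $u_\mu(y)=\mu^{1/(p-2)}u_1(\sqrt{\mu}\,y)$ exactly as you do (this is the paper's \th\ref{lem:criterion}), the free Morse index is shown to equal $m_V+1$ by perturbing from $L_0$, and \th\ref{lem:simple-but-imp} converts this into the constrained Morse index.

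Two places where the paper is more careful than your sketch. First, you assert $w_\varepsilon\to w_0$ in $H^1$, but this is not automatic: since $L_0$ has an $N$-dimensional kernel, $\wtL_\varepsilon^{-1}$ has norm of order $\varepsilon^{-2}$, and a priori $w_\varepsilon$ could blow up along kernel directions. The paper argues by contradiction, showing that if $\|z_{\varepsilon_n}\|\to\infty$ then the normalised weak limit lies in $\ker L_0$, and then uses the identity $\int u_{\varepsilon_n}y_n\partial_iV_{\varepsilon_n}=0$ (obtained by differentiating the equation for $u_{\varepsilon_n}$) together with the nondegeneracy of $\rmD^2V(x_0)$ to force this limit to vanish. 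This is precisely the mechanism you allude to in your last paragraph, but it is needed already for the convergence of the scalar product, not only for the Morse index. In fact the paper never claims $w_\varepsilon\to w_0$; it only proves $(w_\varepsilon,u_\varepsilon)_2\to(w_0,u_0)_2$, which suffices since any weak limit of $w_\varepsilon$ differs from $w_0$ by an element of $\ker L_0$, and those are $L^2$-orthogonal to $u_0$. Second, the free Morse index computation is carried out in full in an appendix via explicit min--max estimates on the spaces $\opspan(u_0)\oplus\tilde Y_-$ and $Z\oplus\tilde Y_+$, using the refined estimate $\|u_{0,\varepsilon}-u_\varepsilon\|_{H^2}=O(\varepsilon^2)$ to control the off-diagonal terms; this is the rigorous version of the Lyapunov--Schmidt heuristic you outline.
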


We emphasize that properties \ref{item:9}--\ref{item:11} were
already proved in \cite{MR1956951}, and that \ref{item:12}
follows from \ref{item:11} by a simple change of variable. For
our purposes, the property~\ref{item:10} is of key importance. We
shall also see in Section~\ref{sec:example} below that, for
$\eps \in (0,\eps_0)$,
\begin{equation}
  \label{eq:z-sing-perturbed}
  \lr(){\olu_\varepsilon, z_{\olu_\varepsilon}}_2 <0
  \text{ if $2<p < 2 + \frac{4}{N}$}\quad \text{and}\quad
  \lr(){\olu_\varepsilon, z_{\olu_\varepsilon}}_2 >0
  \text{ if $2 + \frac{4}{N}<p<2^*$,}          
\end{equation}
where $z_{\olu_\eps}$ is given as in
\th\ref{def:fully-nondegenerate} corresponding to
$u=\olu_\eps$. Since the solutions $\olu_\varepsilon$ in
\th\ref{teo:semiclassical-existence} depend continuously on
$\varepsilon$ and $\abs{\olu_\varepsilon}_2^2\to0$ as
$\varepsilon\to0$, we can find, for every $\alpha>0$ and large
enough $n\in\dN$, a number $\varepsilon_n \in (0,\eps_0)$ such
that $\abs{\olu_{\varepsilon_n}}_2^2=\alpha/n$.  The combination
of
\th\ref{thm:two-bumps,thm:two-bumps-morse-index,teo:semiclassical-existence}
with \eqref{eq:z-sing-perturbed} therefore yields the following
corollary.
\begin{corollary}
  \th\label{cor:multibumps-example} Assume \ref{item:4} and
  \ref{item:5}. Then for every $\alpha>0$ there exist
  $n_\alpha \in\dN$ and a sequence $\varepsilon_n\to0$ such that
  for every $n\ge n_\alpha$ the problem
  \prob{\alpha,\varepsilon_n} has infinitely many geometrically
  distinct positive solutions. More precisely, for every
  $n \in \dN$ with $n\ge n_\alpha$, and every $\delta>0$ there
  exists $R_{\delta,n}>0$ such that for every $a\in(\dZ^N)^n$
  with $d(a)\ge R_{\delta,n}$ there is a critical point
  $u_a$ of $\Phi_{\varepsilon_n}|_{\Sigma_\alpha}$ with Lagrange multiplier
  $\lambda_a$ such that
  \begin{equation*}
    \biggnorm{u_a-\sum_{i=1}^n \cT_{a^i} \olu_{\eps_n}}_{H^1(\dR^N)}\le \delta 
    \qquad\text{and}\qquad \abs{\lambda_a}  \le \delta.
  \end{equation*}
  If $\delta$ is chosen small enough then $u_a$ is unique.
  Moreover, $u_a$ is a positive function, and its Morse index
  with respect to $\Phi|_{\Sigma_\alpha}$ is given by
  \begin{equation*}
    m(u_a) = \left \{
      \begin{aligned}
        &n(m_V+1)-1 &&\quad \text{if $ 2<p < 2 + \frac{4}{N}$,}\\
        &n (m_V+1) &&\quad \text{if $ 2 + \frac{4}{N}<p<2^*$,}
      \end{aligned}
    \right.
  \end{equation*}
  where $m_V$ denotes the number of negative eigenvalues of the
  Hessian of $V$ at $x_0$.
\end{corollary}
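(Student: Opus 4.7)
The plan is to assemble the three previous results of this section. First I would invoke \th\ref{teo:semiclassical-existence} to obtain the continuous branch $\varepsilon\mapsto\olu_\varepsilon$ of positive, fully nondegenerate solutions of \fprob{\varepsilon} with Morse index given by \eqref{eq:morse-index-sing-perturbed} and with $\abs{\olu_\varepsilon}_2^2\to0$ as $\varepsilon\to0$. Before passing to the constrained problem, I would observe that the Lagrange multiplier of $\olu_\varepsilon$ viewed as a critical point of $\Phi_\varepsilon|_{\Sigma_{\abs{\olu_\varepsilon}_2^2}}$ equals $0$: indeed, since $\olu_\varepsilon$ solves \fprob{\varepsilon} we have $\Phi_\varepsilon'(\olu_\varepsilon)=0$ in $H^{-1}(\dR^N)$, and the stationary relation $\Phi_\varepsilon'(\olu_\varepsilon)=\olambda\,\olu_\varepsilon$ then forces $\olambda=0$ because $\olu_\varepsilon\not\equiv 0$. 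Using the continuity of $\varepsilon\mapsto\abs{\olu_\varepsilon}_2^2$ on $(0,\varepsilon_0)$ together with $\abs{\olu_\varepsilon}_2^2\to 0$, an intermediate value argument then supplies an integer $n_\alpha$ and, for every $n\ge n_\alpha$, a value $\varepsilon_n\in(0,\varepsilon_0)$ with $\abs{\olu_{\varepsilon_n}}_2^2=\alpha/n$; since $\alpha/n\to0$, one can arrange $\varepsilon_n\to0$.

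For each $n\ge n_\alpha$ I would then apply \th\ref{thm:two-bumps} with the functional $\Phi_{\varepsilon_n}$ (the standing hypotheses \ref{item:1}--\ref{item:3} hold since $V$ is $1$-periodic by \ref{item:4} and $f(s)=\abs{s}^{p-2}s$ with $p\in(2,2^*)$ by \ref{item:5}), to the fully nondegenerate critical point $\olu_{\varepsilon_n}$ of $\Phi_{\varepsilon_n}|_{\Sigma_{\alpha/n}}$ with Lagrange multiplier $0$. Given $\delta>0$, this yields $R_{\delta,n}>0$ such that for every $a\in(\dZ^N)^n$ with $d(a)\ge R_{\delta,n}$ there is a unique critical point $u_a$ of $\Phi_{\varepsilon_n}|_{\Sigma_\alpha}$ in the prescribed $\delta$-tube, with $\abs{\lambda_a}=\abs{\lambda_a-0}\le\delta$. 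Positivity of $u_a$ follows from the last statement of \th\ref{thm:two-bumps}, because $\olu_{\varepsilon_n}>0$ implies $f(\olu_{\varepsilon_n})=\olu_{\varepsilon_n}^{p-1}\ge0$, not identically zero. Infinitely many geometrically distinct such solutions arise because $(\dZ^N)^n$ contains infinitely many tuples with $d(a)\ge R_{\delta,n}$ lying in pairwise distinct orbits under the diagonal $\dZ^N$-translation action, and the approximate multibump shape of $u_a$ localized near $a^1,\dots,a^n$ prevents identification across orbits.

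Finally I would apply \th\ref{thm:two-bumps-morse-index} in the same setting. By \eqref{eq:z-sing-perturbed} the scalar product $\lr(){\olu_{\varepsilon_n},z_{\olu_{\varepsilon_n}}}_2$ is negative when $2<p<2+\frac{4}{N}$ and positive when $2+\frac{4}{N}<p<2^*$; combining this with \eqref{eq:morse-index-sing-perturbed}, the Morse index formula of \th\ref{thm:two-bumps-morse-index} evaluates to $m(u_a)=n(m(\olu_{\varepsilon_n})+1)-1=n(m_V+1)-1$ in the mass-subcritical regime and to $m(u_a)=n\,m(\olu_{\varepsilon_n})=n(m_V+1)$ in the mass-supercritical regime, as claimed. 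There is no genuine analytical obstacle here; the only non-routine bookkeeping is the verification that the Lagrange multiplier $\olambda$ vanishes along the branch $\olu_\varepsilon$, which converts the conclusion $\abs{\lambda_a-\olambda}\le\varepsilon$ of the multibump theorem into the absolute bound $\abs{\lambda_a}\le\delta$ required by the corollary.
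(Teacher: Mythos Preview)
Your proposal is correct and follows essentially the same approach as the paper, which derives the corollary directly from the combination of \th\ref{thm:two-bumps}, \th\ref{thm:two-bumps-morse-index}, \th\ref{teo:semiclassical-existence}, and \eqref{eq:z-sing-perturbed} after using continuity of $\varepsilon\mapsto\abs{\olu_\varepsilon}_2^2$ to select $\varepsilon_n$ with $\abs{\olu_{\varepsilon_n}}_2^2=\alpha/n$. In fact you supply more detail than the paper does, including the explicit check that $\olambda=0$ and the geometric distinctness argument.
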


Our next result is concerned with the orbital instability of the
normalized multibump solutions we have constructed in the
previous theorems. For this we focus on odd nonlinearities $f$ in
$(P_\alpha)$ satisfying \ref{item:3} and therefore assume
\begin{enumerate}[resume*=hypotheses]
\item\label{item:13} the function $f$ is odd.
\end{enumerate}
We also assume \ref{item:1} and \ref{item:3}, so $\Phi$ in
\eqref{eq:def-functional-phi} is a well defined
$C^2$-functional. If $\varphi \in \Sigma_\alpha$ is a critical
point of $\Phi |_{\Sigma_\alpha}$ with Lagrangian multiplier
$\lambda$, then the function
\begin{equation}
  \label{eq:def-solitary-wave-solution-0}
  u_\varphi\colon \dR \times \dR^N \to \dC, \qquad u_\varphi(t,x)= \varphi(x)\rme^{i\lambda t}
\end{equation}
is a solution of the time-dependent nonlinear Schrödinger
equation
\begin{equation}
  \label{eq:time-schroedinger-0}
  -i u_t =- \Delta u +V(x) u - g(\abs{u}^2)u,
\end{equation}
where $g$ is defined by $f(t)=g(\abs{t}^2)t$.  Solutions of this
special type are usually called solitary wave solutions. The
solution $u_\varphi$ is called {\em orbitally stable} if for
every $\varepsilon>0$ there exists $\delta>0$ such that every
solution $u\colon [0,t_0) \to H^1(\dR^N,\dC)$ of
\eqref{eq:time-schroedinger-0} with
$\norm{u(0,\cdot)-\varphi}_{H^1} < \delta$ can be extended to a
solution $[0,\infty) \to H^1(\dR^N,\dC)$ which satisfies
\begin{equation*}
  \sup_{0<t< \infty} \inf_{s \in \dR} \norm{u(t,\cdot)-
    u_\varphi(s,\cdot)}_{H^1} < \varepsilon.
\end{equation*}
Otherwise, $u_\varphi$ is called {\em orbitally unstable}. We
then have the following result.

\begin{theorem}
  \th\label{thm:orbital-instability}
  Assume \ref{item:1}, \ref{item:3}, and \ref{item:13}, and suppose that
  $\varphi \in \Sigma_\alpha$ is a positive function which is a
  critical point of $\Phi |_{\Sigma_\alpha}$ with positive Morse
  index and Lagrangian multiplier
  $\lambda < \inf \sigma_\rmess(-\Delta + V)$.  Then the
  corresponding solitary wave solution $u_\varphi$ of
  \eqref{eq:time-schroedinger-0} is orbitally unstable.
\end{theorem}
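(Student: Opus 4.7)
The plan is to reduce the orbital instability of $u_\varphi$ to a linearized spectral instability via the classical framework of Grillakis, using the positivity of $\varphi$ and the spectral gap $\lambda < \inf \sigma_\rmess(-\Delta + V)$ to verify the spectral hypotheses of that framework.

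Under \ref{item:1}, \ref{item:3} and \ref{item:13}, the Cauchy problem for \eqref{eq:time-schroedinger-0} is locally well-posed in $H^1(\dR^N,\dC)$ and the flow preserves $\Phi$, the mass $Q(u) := \tfrac12|u|_2^2$, and the $U(1)$-symmetry $u \mapsto e^{i\theta} u$ (here oddness of $f$ enters). Consequently the Lyapunov functional $\Lambda := \Phi - \lambda Q$ is conserved and has $\varphi$ as a critical point on $H^1(\dR^N,\dC)$. On complex perturbations $v + iw$ with real $v, w$, the Hessian $\Lambda''(\varphi)$ splits as a direct sum $\cL_+ \oplus \cL_-$ with
\begin{equation*}
  \cL_+ v = -\Delta v + (V-\lambda) v - f'(\varphi) v, \qquad \cL_- w = -\Delta w + (V-\lambda) w - \tfrac{f(\varphi)}{\varphi} w .
\end{equation*}
By $\varphi \in H^1$ and the growth condition \ref{item:3}, the multiplication operators $v \mapsto f'(\varphi) v$ and $w \mapsto \frac{f(\varphi)}{\varphi} w$ are relatively compact perturbations of $-\Delta + V - \lambda$, so $\inf \sigma_\rmess(\cL_\pm) = \inf \sigma_\rmess(-\Delta + V) - \lambda > 0$. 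Hence $\cL_\pm$ have purely discrete spectrum below zero with only finitely many negative eigenvalues, and any zero eigenvalue is isolated. Since $\varphi > 0$ satisfies $\cL_-\varphi = 0$, a Perron-Frobenius-type argument identifies $\varphi$ as the ground state of $\cL_-$, so $\cL_- \geq 0$ with simple kernel $\dR\varphi$. The positive Morse index hypothesis yields $\xi \in T_\varphi \Sigma_\alpha = \varphi^\perp$ with $\langle \cL_+\xi,\xi\rangle_2 < 0$, so $\cL_+$ admits a negative direction on $\varphi^\perp$.

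These spectral data are exactly the hypotheses of Grillakis' instability theorem (Comm.\ Pure Appl.\ Math.\ 41 (1988)): on the symmetry-reduced phase space (orthogonal to the orbit-tangent direction $i\varphi$), the linearized operator $JL$, with $J$ the complex-structure symplectic form and $L = \Lambda''(\varphi)$, possesses a real positive eigenvalue. The spectral instability is then upgraded to orbital instability by a standard modulation argument: assuming stability, decompose a perturbed solution as $u(t) = e^{i\theta(t)}(\varphi + \eta(t))$ with $\eta(t) \perp i\varphi$; the conservation of $\Lambda$ and $Q$ together with the spectral decomposition of $\Lambda''(\varphi)$ then force the projection of $\eta(t)$ onto the unstable eigenmode of $JL$ to grow exponentially, contradicting the assumed $H^1$-boundedness of $\eta$.

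The main obstacle is the extraction of the real positive eigenvalue of $JL$ in the presence of the nontrivial kernel $\dR\varphi \subset \ker \cL_-$. This is done by passing to $\varphi^\perp$, on which the spectral gap $\inf \sigma_\rmess(\cL_-) > 0$ makes $\cL_-$ positive definite with a uniform lower bound; one then forms the self-adjoint operator $\cL_-^{1/2} \cL_+ \cL_-^{1/2}$ on $\varphi^\perp$, whose negative direction is inherited from that of $\cL_+$, and extracts from it a real positive eigenvalue of $JL$ on the symmetry-reduced phase space. A secondary technical point is the careful handling of the essential spectrum when transferring from spectral to nonlinear instability, which is again resolved by the spectral gap provided by $\lambda < \inf \sigma_\rmess(-\Delta + V)$.
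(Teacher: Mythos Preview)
Your overall strategy matches the paper's: split the linearized operator at $\varphi$ into the real and imaginary parts $\cL_+,\cL_-$ (the paper's $L_1,L_2$), use $\lambda<\inf\sigma_\rmess(-\Delta+V)$ to push the essential spectrum of both above $0$, use positivity of $\varphi$ to identify $\ker\cL_-=\dR\varphi$, use the positive Morse index to produce a negative direction for $\cL_+$ on $\varphi^\perp$, and then produce a real positive eigenvalue of $JL$ and invoke the passage from linear to orbital instability. The paper cites Grillakis--Shatah--Strauss II (via the Esteban--Strauss reduction) for this last step rather than Grillakis (1988), but the reduction to a spectral instability of $JL$ is the same.

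Where you and the paper differ is in how the positive real eigenvalue of $JL$ is actually produced, and here your sketch glosses over the only genuinely nontrivial point. You propose to form $\cL_-^{1/2}\cL_+\cL_-^{1/2}$ on $\varphi^\perp$ and ``extract'' a negative eigenvalue from the inherited negative direction. But a negative direction only gives negative \emph{spectrum}; to get an eigenfunction you must show the bottom of that spectrum is attained, and this is not automatic because neither $\cL_-^{1/2}$ nor $\cL_+$ is compact. The paper handles exactly this issue by working instead with the equivalent Rayleigh quotient
\[
\mu=\inf_{v\in\varphi^\perp\setminus\{0\}}\frac{\langle \cL_+ v,v\rangle}{\langle \cL_-^{-1} v,v\rangle},
\]
and proving (their Lemma~6.3) that $\mu\in(-\infty,0)$ is \emph{attained}, via a careful weak-compactness argument using the spectral decomposition $\varphi^\perp=V^-\oplus V^+$ for $\cL_+$ and the spectral gap of $\cL_-$. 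The paper explicitly remarks that the standard references (Esteban--Strauss) do not apply directly to this setting, so you cannot simply cite an off-the-shelf result here. Once the minimizer $v$ exists, the eigenvector of $JL$ is written down explicitly from $v$ and $\cL_-^{-1}v$ together with a Lagrange multiplier term in the $\varphi$-direction, with eigenvalue $\sqrt{-\mu}$. Your conjugated-operator formulation is formally equivalent to this, but you would need to supply the same attainment argument to make it a proof.
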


Here and in the following, $\sigma_\rmess(-\Delta +V)$ denotes
the essential spectrum of the Schrödinger operator $-\Delta
+V$. We note that \th\ref{thm:orbital-instability} neither
requires periodicity of $V$, nor does it require the assumption
on the oddness of a certain difference of numbers of eigenvalues
in the seminal instability result in \cite[p.~309]{MR1081647}.
\th\ref{thm:orbital-instability} applies to the normalized
multibump solutions constructed in \th\ref{thm:two-bumps} and
Corollaries~\ref{thm:two-bumps-local-min} and
\ref{cor:multibumps-example} in the case where the nonlinearity
satisfies \ref{item:7} and \ref{item:13}.  In these cases, the
extra assumption $\lambda < \inf \sigma_\rmess(-\Delta + V)$
follows from \th\ref{lem:positive-spectrum} below and the fact
that the Lagrangian multipliers of the multibump solutions are
arbitrarily close to the multiplier of the initial solution.

There are many results on the orbital stability and instability
of the standing waves generated by solutions to \prob{\alpha},
see \cite{MR2527691, MR2465996, PhysRevA.66.063605, MR901236,
  MR677997}.  However, none of these results covers the situation
addressed in \th\ref{thm:orbital-instability}.

The paper is organized as follows.  In
Section~\ref{sec:result-about-pert} we collect some preliminary
notions and observations.  In particular, here we explain our new
notions of fully nondegenerate restricted critical point and of
the free Morse index.  In Section~\ref{sec:gluing-bumps} we then
prove \th\ref{thm:two-bumps}.  In
Section~\ref{sec:morse-index-nond} we derive a general result on
the Morse index of normalized multibump solutions which gives
rise to \th\ref{thm:two-bumps-morse-index}.  At the end of
this section, we also complete the proof of
\th\ref{thm:two-bumps-local-min}.  In
Section~\ref{sec:example}, we analyze the singular perturbed
problem \ref{eq:38} and we prove
\th\ref{teo:semiclassical-existence}. In
Section~\ref{sec:orbital-instability}, we then prove the orbital
instability result given in
\th\ref{thm:orbital-instability}. Finally, in the Appendix we
provide a computation of the free Morse index of the solutions
$u_\eps$ considered in
\th\ref{teo:semiclassical-existence}. This computation is
partly contained in \cite[Proof of Theorem 2.5]{MR2403325}, but
some details have been omitted there. We therefore provide a
somewhat different argument in detail for the convenience of the
reader.

We finally remark that the main results of our paper can be
extended to more general nonlinearities.  In particular,
\th\ref{thm:two-bumps} has an abstract proof that extends to
nonlinearities that also depend on $x$, $1$-periodically in every
coordinate. This proof also extends to nonlocal nonlinearities
with convolution terms as in \cite{MR2527691}.  This follows from
Brézis-Lieb type splitting properties for these nonlinearities
that were proved in \cite{MR2216902}. 

\subsection{Notation}
\label{sec:functional-setup}

In the remainder of the paper, we write $\abs{\cdot}_p$ for the
standard $L^p(\dR^N)$-norm, $1 \le p \le \infty$. We also use the
notation $(\cdot,\cdot)_2$ for the standard $L^2(\dR^N)$-scalar
product. For the sake of brevity, we write $L^2$ in place of
$L^2(\dR^N)$ and $H^k$ in place of $H^k(\dR^N)$, for $k \in \dN$.
By \ref{item:1}, $-\Delta+V$ is a self adjoint operator in $L^2$
with domain $H^2$.  Since we assume \ref{item:1} throughout the
paper and $\lambda$ is a free parameter in \prob{\alpha}, we may
assume without loss of generality that $\gamma\coloneqq\min\sigma(-\Delta+V)>0$,
where $\sigma(-\Delta+V)$ stands for the spectrum of
$-\Delta + V$.  Then $H^1$ is the form domain (the energy space)
of $-\Delta+V$, and we may endow $H^1$ with the scalar product
\begin{equation}
  \label{eq:def-scp-H-1}
  \scp{u,v} = \int_{\dR^N} \Bigl(\nabla u \cdot \nabla v + V uv\Bigr) , \qquad u,v\in H^1.
\end{equation}
The norm $\norm{\cdot}$ induced by $\scp{\cdot,\cdot}$ is
equivalent to the standard norm on $H^1$.  It will be convenient
to denote $S\coloneqq(-\Delta+V)^{-1}$; then we have
\begin{equation}
  \label{eq:rel-scp-H-1}
  \scp{u,v} =(S^{-1/2}u,S^{-1/2}v)_2 \qquad \text{for $u,v\in H^1$.}
\end{equation}
We point out that, for a subspace $Z \subset H^1$, the notation
$Z^\perp$ always refers to the orthogonal complement of $Z$ in
$H^1$ with respect to the scalar product $\scp{\cdot,\cdot}$.

We recall that the spectrum $\sigma(-\Delta+V)$ is purely
essential if \ref{item:2} is assumed. In this case, it also
follows that all powers of $S$ are equivariant with respect to
the action of $\dZ^N$.  Hence
\begin{equation*}
  \scp{\cT_a v,\cT_a w}=\scp{v,w}
  \qquad\forall v,w\in H^1,\ \forall a\in\dZ^N.
\end{equation*}

For any two normed spaces $X,Y$ the space of bounded linear
operators from $X$ in $Y$ is denoted by $\cL(X,Y)$, and we write
$\cL(X)\coloneqq\cL(X,X)$.

For a $C^1$-functional $\Theta$ defined on $H^1$, we let
$\rmd \Theta \colon H^1 \to (H^1)^*$ denote the derivative of
$\Theta$ and $\nabla \Theta\colon H^1 \to H^1$ the gradient with
respect to the scalar product $\scp{\cdot,\cdot}$ defined in
\eqref{eq:def-scp-H-1}.  Moreover, if $\Theta$ is of class $C^2$,
then $\rmd ^2 \Theta(u)\colon H^1 \times H^1 \to \dR$ denotes the
Hessian of $\Theta$ at a point $u \in H^1$, whereas
$\rmD^2 \Theta(u) \in \cL(H^1)$ stands for the derivative of the
gradient of $\Theta$ at $u$.  We then have
\begin{equation*}
  \scp{\rmD^2 \Theta(u) v, w} = \rmd ^2 \Theta(u)[v,w] \qquad
  \text{for $v,w \in H^1$.}
\end{equation*}

\textbf{Acknowledgement:}
The authors wish to thank the referee for his/her valuable comments and corrections.

\section{Some preliminary abstract results and notions}
\label{sec:result-about-pert}

In this section we state some abstract results which will be used
in Section~\ref{sec:gluing-bumps} in the proof of
\th\ref{thm:two-bumps}.  We start with a standard corollary of
Banach's fixed point theorem, which is sometimes referred to as a {\em Shadowing Lemma}.

\begin{lemma}
  \th\label{lem:some-prel-abstr-banach}
  Let $(E,\norm{\cdot})$ be a Banach space, let ${ h}\colon E \to E$
  be continuously differentiable with derivative
  $\rmd   h\colon E \to \cL(E)$, and let $v_0 \in E$, $\delta>0$,
  $q \in (0,1)$ satisfy the following:
  \begin{enumerate}[label=\textup{(\roman*)}]
  \item $T:= \rmd   h(v_0) \in \cL(E)$ is an isomorphism.
  \item
    $\norm{ h(v_0)} <
    \frac{\delta(1-q)}{\norm{T^{-1}}_{\cL(E)}}$.
  \item $\norm{ \rmd   h(y)-T}_{\cL(E)} \le
    \frac{q}{\norm{T^{-1}}_{\cL(E)}}$ for
    $y \in B_\delta(v_0)$.
  \end{enumerate}
  Then $ h$ has a unique zero in $B_\delta(v_0)$.
\end{lemma}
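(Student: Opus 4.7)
The plan is the standard Newton-type fixed-point argument: recast $h(v)=0$ as a fixed-point problem for a contraction. I would define
\[
K\colon E \to E,\qquad K(v) \coloneqq v - T^{-1} h(v),
\]
and note that, since $T^{-1}$ is a linear isomorphism, $K(v)=v$ if and only if $h(v)=0$. The whole proof then reduces to applying Banach's fixed point theorem to $K$ on the complete metric space $\overline{B_\delta(v_0)}$.

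Next I would check that $K$ is a $q$-contraction on $\overline{B_\delta(v_0)}$. Since $h$ is of class $C^1$, so is $K$, and
\[
\rmd K(v) = I - T^{-1}\,\rmd h(v) = -T^{-1}\bigl(\rmd h(v)-T\bigr).
\]
Hypothesis~(iii) then yields $\norm{\rmd K(y)}_{\cL(E)} \le \norm{T^{-1}}_{\cL(E)} \cdot \norm{\rmd h(y)-T}_{\cL(E)} \le q$ for every $y \in B_\delta(v_0)$, and by continuity this extends to the closed ball. The mean value inequality on the convex set $\overline{B_\delta(v_0)}$ then gives $\norm{K(v)-K(w)} \le q\norm{v-w}$ for $v,w \in \overline{B_\delta(v_0)}$.

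Then I would verify that $K$ maps $\overline{B_\delta(v_0)}$ into itself. For $v \in \overline{B_\delta(v_0)}$ the triangle inequality, the contraction estimate, and hypothesis~(ii) give
\[
\norm{K(v)-v_0} \le \norm{K(v)-K(v_0)} + \norm{T^{-1} h(v_0)} \le q\delta + \norm{T^{-1}}_{\cL(E)} \norm{h(v_0)} < q\delta + \delta(1-q) = \delta,
\]
so in fact $K\bigl(\overline{B_\delta(v_0)}\bigr) \subset B_\delta(v_0)$.

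Finally, Banach's fixed point theorem applied to $K$ on $\overline{B_\delta(v_0)}$ produces a unique fixed point, which by the strict inequality above lies in the open ball $B_\delta(v_0)$; this is the unique zero of $h$ in $B_\delta(v_0)$. I do not anticipate any real obstacle here — the argument is routine, and the only point to watch is that the strict inequality in~(ii) is exactly what is needed for the self-map estimate to close with a strict $<\delta$, placing the fixed point in the open ball as required.
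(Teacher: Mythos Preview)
Your proof is correct and follows exactly the approach sketched in the paper: the paper also defines the map $y \mapsto y - T^{-1}h(y)$, shows it is a $q$-contraction on $\overline{B_\delta(v_0)}$, applies Banach's fixed point theorem, and remarks that the strict inequality in~(ii) places the fixed point in the open ball. You have simply fleshed out the details the paper leaves as routine.
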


The proof of this lemma is standard by showing that the map
$y \mapsto y - T^{-1} h(y)$ defines a $q$-contraction on
$\overline{B_\delta(v_0)}$. Applying Banach's fixed point theorem
to this map gives rise to a unique zero of $ h$ in
$\overline{B_\delta(v_0)}$, and it easily follows from the above
assumptions that this zero is contained in $B_\delta(v_0)$.

We will use the following immediate corollary of
\th\ref{lem:some-prel-abstr-banach}.

\begin{corollary}
  \th\label{cor:some-prel-abstr-banach}
  Let $(E,\norm{\cdot})$ be a Banach space, let $h\colon E \to E$
  be differentiable and such that its derivative
  $\rmd   h\colon E \to \cL(E)$ is uniformly continuous on bounded
  subsets of $E$. Moreover, let $( v_k)_k$ be a bounded sequence
  in $E$ such that
  \begin{enumerate}[label=\textup{(\roman*)}]
  \item $ h( v_k) \to 0$ as $k \to \infty$;
  \item $\rmd   h( v_k) \in \cL(E)$ is an isomorphism for
    $k \in \mathbb{N}$, and
    $\sup_{k \in \mathbb{N}} \norm{\rmd   h(
      v_k)^{-1}}_{\cL(E)} < \infty$.
  \end{enumerate}
  Then there exist $k_0 \in \mathbb{N}$ and $u_k \in E$,
  $k \ge k_0$, with
  \begin{equation}
    \label{eq:-corollary-banach-seq-1}
     h(u_k)=0 \qquad \text{for $k \ge k_0$}  
  \end{equation}
  and
  \begin{equation}
    \label{eq:-corollary-banach-seq-2}
    \norm{u_k-v_k} \to 0 \qquad \text{as $k \to \infty$.}
  \end{equation}
  Moreover, the sequence $(u_k)_k$ is uniquely determined by
  properties
  \eqref{eq:-corollary-banach-seq-1},~\eqref{eq:-corollary-banach-seq-2}
  for large $k$.
\end{corollary}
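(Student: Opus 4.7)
The plan is to apply the Shadowing Lemma (\th\ref{lem:some-prel-abstr-banach}) separately at $v_0:=v_k$ with $T:=\rmd h(v_k)$ for each sufficiently large $k$, arranging a radius $\delta=\delta_k\to0$ for which all three hypotheses of \th\ref{lem:some-prel-abstr-banach} hold uniformly in $k$. Fix $q:=\tfrac12$ and set $M:=\sup_{k\in\dN}\norm{\rmd h(v_k)^{-1}}_{\cL(E)}$, which is finite by assumption (ii) of the corollary; this same assumption also provides hypothesis (i) of \th\ref{lem:some-prel-abstr-banach}.

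Since $(v_k)_k$ is bounded, a fixed $1$-neighbourhood of $\{v_k\mid k\in\dN\}$ is bounded, and the uniform continuity of $\rmd h$ on this bounded set yields $\delta_0\in(0,1]$ such that
\begin{equation*}
  \norm{\rmd h(y)-\rmd h(v_k)}_{\cL(E)}\le \frac{q}{M}\qquad\text{whenever}\quad \norm{y-v_k}\le \delta_0,
\end{equation*}
uniformly in $k$, which is hypothesis (iii) of \th\ref{lem:some-prel-abstr-banach} at $v_0=v_k$ for every $\delta\in(0,\delta_0]$. Using $\norm{h(v_k)}\to0$, I pick $k_0\in\dN$ with $\tfrac{M}{1-q}\norm{h(v_k)}<\delta_0$ for all $k\ge k_0$, and for such $k$ choose any $\delta_k\in\bigl(\tfrac{M}{1-q}\norm{h(v_k)},\,\delta_0\bigr]$ with $\delta_k\to0$ as $k\to\infty$ (for instance $\delta_k:=\max\bigl(\tfrac{2M}{1-q}\norm{h(v_k)},\,1/k\bigr)$, truncated at $\delta_0$). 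Hypothesis (ii) of \th\ref{lem:some-prel-abstr-banach} then reads $\norm{h(v_k)}<\delta_k(1-q)/M\le\delta_k(1-q)/\norm{T^{-1}}_{\cL(E)}$ and holds by construction, so \th\ref{lem:some-prel-abstr-banach} produces a unique $u_k\in B_{\delta_k}(v_k)$ with $h(u_k)=0$. The estimate $\norm{u_k-v_k}<\delta_k\to 0$ yields both \eqref{eq:-corollary-banach-seq-1} and \eqref{eq:-corollary-banach-seq-2}.

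For the uniqueness statement, let $(\wtu_k)_k$ be another sequence satisfying \eqref{eq:-corollary-banach-seq-1} and \eqref{eq:-corollary-banach-seq-2}. For any fixed $\delta\in(0,\delta_0]$, hypothesis (ii) of \th\ref{lem:some-prel-abstr-banach} at $v_0=v_k$ with radius $\delta$ holds for all $k$ large enough, so $u_k$ is the unique zero of $h$ in $B_\delta(v_k)$ for such $k$. Since $\norm{\wtu_k-v_k}\to0$ forces $\wtu_k\in B_\delta(v_k)$ for large $k$, we conclude $\wtu_k=u_k$ eventually. I do not anticipate any genuine obstacle; the argument is an essentially routine bookkeeping of constants, and the only mild care required is to pick $\delta_k$ small enough to tend to zero while still dominating $\tfrac{M}{1-q}\norm{h(v_k)}$ so as to satisfy both (ii) and (iii) of \th\ref{lem:some-prel-abstr-banach} simultaneously.
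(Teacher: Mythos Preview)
Your proof is correct and follows exactly the approach the paper intends: it states the corollary as an ``immediate'' consequence of \th\ref{lem:some-prel-abstr-banach} without spelling out the details, and your careful bookkeeping with $q=\tfrac12$, the uniform bound $M$, and the shrinking radii $\delta_k$ is precisely how one verifies the hypotheses of the Shadowing Lemma uniformly in $k$. The uniqueness argument via a fixed radius $\delta\le\delta_0$ is also the right way to handle the final claim.
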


In the remainder of this section, we collect some preliminary
results and notions related to the functional $\Phi$ defined in
\eqref{eq:def-functional-phi} and its restrictions to spheres
with respect to the $L^2(\dR^N)$-norm.  Recall that we are
assuming conditions~\ref{item:1} and~\ref{item:3}.  We denote
\begin{equation*}
  \Psi(u)\coloneqq\int_{\dR^N}F(u),
\end{equation*}
so
\begin{equation*}
  \Phi(u)=\frac12\norm{u}^2-\Psi(u).
\end{equation*}
Following \cite{MR2216902} we say that a map $g\colon X\to Y$ of
Banach spaces $X$ and $Y$ \emph{BL-splits} if
$g(x_n)-g(x_n-x^*) \to g(x^*)$ in $Y$ if $x_n\weakto x^*$ in $X$.
For example, by \cite[Remark~3.3]{MR2216902} the maps
$\norm{\cdot}^2$ and $\abs{\cdot}_2^2$ BL-split.  The next result
about BL-splitting maps is less obvious:
\begin{lemma}\th\label{lem:psi-bl-splits}
  $\Psi$, $\nabla \Psi$ and $\rmD^2\Psi$ BL-split, and these maps
  are uniformly continuous on bounded subsets of $H^1$.
\end{lemma}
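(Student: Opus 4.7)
The strategy is to reduce the four claims to Brezis--Lieb-type splittings and Nemytskii continuity results for the superposition operators associated with $F$, $f$ and $f'$, relying on the subcritical growth in \ref{item:3}, the Sobolev embeddings $H^1 \hookrightarrow L^r$ for $r \in [2, 2^*]$, and the splitting machinery developed in \cite{MR2216902}. I fix $q \in (2, 2^*)$ so that \ref{item:3} together with $f(0) = f'(0) = 0$ yields, for every $\eta > 0$, a constant $C_\eta$ with
\[
\abs{f(s)} \le \eta\abs{s} + C_\eta\abs{s}^{q-1} \qquad \text{and} \qquad \abs{f'(s)} \le \eta + C_\eta\abs{s}^{q-2} \qquad \text{for all } s \in \dR.
\]

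For uniform continuity on bounded subsets of $H^1$, the crucial object is $\rmD^2\Psi$: starting from the identity $\rmd^2\Psi(u)[v, w] = \int_{\dR^N} f'(u)vw$, H\"older's inequality with exponents $q/(q-2), q, q$ and the growth of $f'$ yields $\norm{\rmD^2\Psi(u)}_{\cL(H^1)} \le C(1 + \norm{u}^{q-2})$. Uniform continuity of $\rmD^2\Psi$ on bounded balls of $H^1$ then reduces via the same H\"older pairing to the continuity of $u \mapsto f'(u)$ as a Nemytskii-type operator on suitable Lebesgue spaces, verified by splitting into regions of small and large $\abs{u}$ and using the continuity of $f'$ together with its growth. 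Uniform continuity of $\Psi$ and $\nabla\Psi$ on bounded sets then follows by integrating $\rmD^2\Psi$ along line segments.

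For the BL-splittings, given $u_n \weakto u^*$ in $H^1$, I set $v_n := u_n - u^*$ and pass to a subsequence with $v_n \to 0$ a.e.\ via Rellich's theorem. The key tool is the pointwise Brezis--Lieb-type inequality
\[
\abs{G(a+b) - G(a) - G(b)} \le \eta\,(\abs{a}^2 + \abs{a}^q) + C_\eta\,(\abs{b}^2 + \abs{b}^q),
\]
valid for $a, b \in \dR$, $\eta > 0$, and any $C^1$ function $G$ with $G(0)=G'(0)=0$ and $\abs{G'(s)} \le C(\abs{s} + \abs{s}^{q-1})$. Applied with $G = F$ together with Vitali's convergence theorem and the uniform $L^2 \cap L^q$-boundedness of $(u_n)$, this gives $\Psi(u_n) - \Psi(v_n) \to \Psi(u^*)$. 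The BL-splittings for $\nabla\Psi$ and $\rmD^2\Psi$ are derived by duality: pairing against $\varphi \in H^1$ (resp.\ $v, w \in H^1$) of unit norm reduces the statements to $f(u_n) - f(v_n) - f(u^*) \to 0$ in $L^{q/(q-1)}$ (resp.\ $f'(u_n) - f'(v_n) - f'(u^*) \to 0$ in the $\cL(H^1)$-pairing sense), both of which are handled by the same Brezis--Lieb--Vitali scheme applied to $G = f$ and $G = f'$. Uniqueness of the limit $g(u^*)$ promotes the subsequential splitting to the full sequence. The main technical obstacle is the $\rmD^2\Psi$ case, since $f'$ vanishes at $0$ only continuously without a polynomial rate; here one must carefully partition $\dR^N$ according to the size of $u^*$ and $v_n$, controlling the low-amplitude region via the $L^q$-tail decay of $u^*$ and the continuity of $f'$ at $0$, and the high-amplitude region via dominated convergence with the uniformly $L^1$-bounded dominator $C(\abs{u_n}^q + \abs{v_n}^q + \abs{u^*}^q)$.
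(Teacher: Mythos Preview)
Your overall strategy is sound, but there is a genuine gap in the very first step: the growth bound you claim does not follow from \ref{item:3} when $N\ge 3$. Hypothesis \ref{item:3} only asserts $f'(s)=o(\abs{s}^{2^*-2})$ as $\abs{s}\to\infty$; it does \emph{not} guarantee the existence of a single exponent $q<2^*$ with $\abs{f'(s)}\le \eta + C_\eta\abs{s}^{q-2}$. For instance, $f'(s)=\abs{s}^{2^*-2}/\log(2+\abs{s})$ satisfies \ref{item:3} but violates your bound for every fixed $q<2^*$ once $\abs{s}$ is large. Since all your subsequent H\"older pairings, Vitali dominations, and Nemytskii continuity arguments are built on this estimate, the scheme as written does not cover the general nonlinearities allowed by \ref{item:3}.

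The paper resolves exactly this difficulty by an $\varepsilon$-dependent \emph{three-term} splitting of $f$: one writes $f=f_{1,\varepsilon}+f_{2,\varepsilon}+f_{3,\varepsilon}$ with $\abs{f_{1,\varepsilon}'}\le\varepsilon$, $\abs{f_{2,\varepsilon}'(s)}\le C_\varepsilon\abs{s}^{p-2}$ for a fixed subcritical $p$, and $\abs{f_{3,\varepsilon}'(s)}\le\varepsilon\abs{s}^{2^*-2}$. The middle piece then has honest subcritical growth, so the Brezis--Lieb splitting for $f_{2,\varepsilon}'$ in $L^{p/(p-2)}$ goes through (the paper invokes \cite[Theorem~1.3]{unicobl}); the two outer pieces contribute $O(\varepsilon)$ uniformly after H\"older pairing with $\abs{v}_2\abs{w}_2$ and $\abs{v}_{2^*}\abs{w}_{2^*}$, and one sends $\varepsilon\to 0$ at the end. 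Your two-term bound collapses the first and third pieces into one and thereby loses the ability to make the critical-growth tail small. If you adopt the three-term decomposition, the rest of your plan (Vitali for $\Psi$ and $\nabla\Psi$, the partition argument for the constant-near-zero part of $f'$) becomes essentially the paper's argument.
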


Before we give the proof we fix some $p\in(2,2^*)$ if $N\ge3$ and
we use $p$ given in \ref{item:3} if $N=1,2$.  Using \ref{item:3}
it is easy to construct, for every $\varepsilon>0$, functions
$f_{i,\varepsilon}\in C^1(\dR)$, $i=1,2,3$, and a constant
$C_\varepsilon>0$ such that
\begin{equation}\label{eq:22}
  f=\sum_{i=1}^3f_{i,\varepsilon}
\end{equation}
and such that
\begin{equation}\label{eq:26}
  \abs{f_{1,\varepsilon}'(s)}\le\varepsilon,\quad
  \abs{f_{2,\varepsilon}'(s)}\le C_\varepsilon\abs{s}^{p-2},
  \quad\text{and}\quad
  \abs{f_{3,\varepsilon}'(s)}\le\varepsilon\abs{s}^{2^*-2},
  \qquad\text{for all }s\in\dR.
\end{equation}
If $N=1,2$ we simply choose $f_{3,\varepsilon}\equiv 0$ and
ignore all terms that contain $2^*$.
\begin{proof}[Proof of \th\ref{lem:psi-bl-splits}]
  We only prove this in the case $N\ge3$; the other cases are
  treated similarly.  Consider $(u_n)\subseteq H^1$ such that
  $u_n\weakto u$.  Then $(u_n)$ is bounded in $H^1$ and therefore
  also in $L^q$ for $q\in[2,2^*]$.  For fixed $\varepsilon>0$ we
  have
  \begin{equation*}
    \abs{f'_{2,\varepsilon}(u_n)-f'_{2,\varepsilon}(u_n-u)-f'_{2,\varepsilon}(u)}_{p/(p-2)}\to0
  \end{equation*}
  by \cite[Theorem~1.3]{unicobl}.  On the other hand, there are
  varying constants $C>0$, independent of $\varepsilon$, such
  that
  \begin{equation*}
    \abs{f'_{1,\varepsilon}(u_n)-f'_{1,\varepsilon}(u_n-u)-f'_{1,\varepsilon}(u)}_\infty
    \le C\varepsilon
  \end{equation*}
  and
  \begin{equation*}
    \abs{f'_{3,\varepsilon}(u_n)-f'_{3,\varepsilon}(u_n-u)-f'_{3,\varepsilon}(u)}_{2^*/(2^*-2)}
    \le C\varepsilon
  \end{equation*}
  for all $n$.  For all $v,w\in H^1$ with $\norm{v}=\norm{w}=1$
  it follows that
  \begin{multline*}
    \bigabs{\bigscp{\biglr(){\rmD^2\Psi(u_n)-\rmD^2\Psi(u_n-u)-\rmD^2\Psi(u)}v,w}}\\
    \le C\varepsilon\abs{v}_2\abs{w}_2
    +\abs{f'_{2,\varepsilon}(u_n)-f'_{2,\varepsilon}(u_n-u)-f'_{2,\varepsilon}(u)}_{p/(p-2)}
    \abs{v}_p\abs{w}_p+C\varepsilon\abs{v}_{2^*}\abs{w}_{2^*}\\
    \le C(\varepsilon+o(1))
  \end{multline*}
  and hence
  $\limsup_{n\to\infty}\norm{\rmD^2\Psi(u_n)-\rmD^2\Psi(u_n-u)-\rmD^2\Psi(u)}
  _{\cL(H^1)}\le C\varepsilon$.  Letting $\varepsilon\to0$ we
  obtain the claim for $\rmD^2\Psi$.  The proof for the uniform
  continuity of $\rmD^2\Psi$ on bounded subsets of $H^1$ is
  similar.  Analogously, one treats the maps $\nabla\Psi$ and
  $\Psi$.
\end{proof}

We shall need the following simple consequence of assumption
\ref{item:7}.

\begin{lemma}
  \th\label{lem:h-5-consequence}
  If conditions~\ref{item:1} and \ref{item:3}--\ref{item:7} hold
  true and $u \in H^1 \ssm \{0\}$ satisfies
  $\nabla \Phi(u)= \lambda S u$ for some $\lambda \in \dR$, then
  \begin{equation*}
    \scp{(\rmD^2\Phi(u) -\lambda S)u, u} <0.
  \end{equation*}
\end{lemma}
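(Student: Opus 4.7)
The plan is to compute $\scp{(\rmD^2\Phi(u)-\lambda S)u,u}$ explicitly in terms of $f$, establish pointwise nonpositivity of the resulting integrand by \ref{item:7}, and then upgrade the nonstrict inequality to strict negativity using elliptic regularity of $u$.

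I would first expand using $\Phi=\tfrac12\norm{\cdot}^2-\Psi$, the identity $\scp{\rmD^2\Psi(u)v,w}=\int_{\dR^N} f'(u)vw$, and the relation $\scp{Su,v}=(u,v)_2$ (which follows from the definition of $S$), to obtain
\[
\scp{(\rmD^2\Phi(u)-\lambda S)u,u}=\norm{u}^2-\int_{\dR^N}f'(u)u^2-\lambda\abs{u}_2^2.
\]
Since $\nabla\Phi(u)=\lambda Su$ is equivalent to $u$ being a weak solution of $-\Delta u+Vu-f(u)=\lambda u$, testing this equation against $u$ itself gives $\norm{u}^2-\lambda\abs{u}_2^2=\int_{\dR^N} f(u)u$. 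Combining yields
\[
\scp{(\rmD^2\Phi(u)-\lambda S)u,u}=\int_{\dR^N}\bigl[f(u)u-f'(u)u^2\bigr].
\]

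Next, from \ref{item:7} I would derive $sf(s)-s^2f'(s)\le 0$ for every $s\in\dR$. Since $f\in C^1$, the map $s\mapsto f(s)/\abs{s}$ is $C^1$ on $\dR\ssm\{0\}$. On $(0,\infty)$ it reads $s\mapsto f(s)/s$, which is nondecreasing, so $sf'(s)\ge f(s)$; multiplying by $s>0$ gives the claim. On $(-\infty,0)$ it reads $s\mapsto -f(s)/s$, which is nondecreasing, so $sf'(s)\le f(s)$; multiplying by $s<0$ flips the inequality to the same conclusion. Hence the integrand above is pointwise $\le 0$.

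The remaining issue, and the main obstacle, is to exclude vanishing of the integral. Because $V\in L^\infty$ and $f$ has Sobolev-subcritical growth (by \ref{item:3}), a Brezis--Kato bootstrap together with standard Calder\'on--Zygmund or Schauder estimates shows that any solution of $-\Delta u+Vu=f(u)+\lambda u$ belongs to $C^{1,\alpha}_{\rmloc}(\dR^N)$. In particular the integrand $f(u)u-f'(u)u^2$ is continuous on $\dR^N$. A nonpositive continuous function with zero integral vanishes identically, which would force $sf'(s)=f(s)$ at $s=u(x)$ for every $x$. Since $u\ne 0$, the open set $\{u\ne 0\}$ is nonempty, and on any connected component $U$ the function $u$ keeps a constant sign. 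If $\partial U\ne\emptyset$, continuity gives $u=0$ on $\partial U$; otherwise $U=\dR^N$ and $u\in L^2$ prevents $u$ from being bounded away from zero. In either case $u(U)$ must contain an interval with endpoint $0$. On such an interval the ODE $sf'(s)=f(s)$ forces $f(s)/s$ to be constant, so $f(s)=cs$ there, and the continuity $f'(0)=0$ from \ref{item:3} forces $c=0$, contradicting $f(s)s>0$ from \ref{item:7}. Thus the integral is strictly negative, proving the lemma.
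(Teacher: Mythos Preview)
Your proof is correct and follows essentially the same approach as the paper: both reduce the quantity to $\int_{\dR^N}\bigl[f(u)u-f'(u)u^2\bigr]$ via the equation, use \ref{item:7} for the pointwise sign, and invoke elliptic regularity together with the fact that the range of $u$ contains values arbitrarily close to $0$ to obtain strictness from $f'(0)=0$ and $f(s)s>0$. The only cosmetic difference is that the paper states directly that $s\mapsto f'(s)s^2-f(s)s$ is positive on a nonempty open subset of every punctured neighborhood of $0$ and then uses $u(x)\to 0$ at infinity, whereas you argue the strictness by contradiction; the underlying ideas are identical.
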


\begin{proof}
  By \ref{item:3} and \ref{item:7}, the map
  $s\mapsto f'(s)s^2-f(s)s$ is nonnegative in $\dR$, and it is
  positive on a nonempty open subset of
  $(-\varepsilon,\varepsilon) \ssm \{0\}$ for every
  $\varepsilon>0$.  Moreover, since $u \in H^1$ is a weak
  solution of
  \begin{equation*}
    -\Delta u + [V(x)-\lambda] u = f(u) \qquad \text{in $\dR^N$}
  \end{equation*}
  by assumption, standard elliptic regularity shows that $u$ is
  continuous and that $u(x) \to 0$ as $\abs{x} \to
  \infty$. Consequently, we have
  \begin{align*}
    \scp{\rmD^2\Phi(u)u,u}-\lambda\scp{Su,u}
    &=\scp{\rmD^2\Phi(u)u,u}-\scp{\nabla\Phi(u),u}\\
    &=\scp{\nabla\Psi(u),u}-\scp{\rmD^2\Psi(u)u,u}
    =\int_{\dR^N}(f(u)u-f'(u)u^2)<0,
  \end{align*}
  as claimed.
\end{proof}

As before, for $\alpha>0$, we consider the sphere
$\Sigma_\alpha \subset H^1$ as defined in
\eqref{eq:def-Sigma-alpha}, and we let
$J_\alpha\colon\Sigma_\alpha\to\dR$ denote the restriction of
$\Phi$ to $\Sigma_\alpha$.  We note that, for
$u \in \Sigma_\alpha$, the tangent space of $\Sigma_\alpha$ at
$u$ is given by
\begin{equation}
  \label{eq:tan-space}
  T_{u}\Sigma_\alpha  = \{v \in H^1\mid (v,u)_2 = 0\} = \{v \in H^1\mid\scp{v, S u}  = 0\} \subset H^1,
\end{equation}
where latter equality follows from \eqref{eq:rel-scp-H-1}.  If
$u$ is a critical point of $J_\alpha$, we have
\begin{equation}
  \label{eq:lagrange-mult-basic}
  \nabla \Phi(u) = \lambda S u
\end{equation}
for some $\lambda \in \dR$, the corresponding Lagrange
multiplier. Moreover, the Hessian $\rmd ^2 J_\alpha (u)$ is a
well-defined quadratic form on $T_{u} \Sigma_\alpha$ given by
\begin{equation}
  \label{eq:formula-second-der-J-alpha}
  \rmd ^2 J_\alpha(u)[v,w] 
  = \scp{\rmD^2 \Phi(u)v, w} - \lambda \scp{Sv,w}
  \qquad \text{for $v, w \in T_{u} \Sigma_\alpha$.}
\end{equation}
For the general definition of the Hessian of $C^2$-functionals on
Banach manifolds at critical points, see
e.g. \cite[p. 307]{MR0158410}. To see
\eqref{eq:formula-second-der-J-alpha}, one may argue with local
coordinates for $\Sigma_\alpha$ at $u$, as is done, e.g., in
\cite[Theorem~8.9]{MR1319337} in the finite dimensional case.
Alternatively, to prove \eqref{eq:formula-second-der-J-alpha} we
may consider smooth vector fields $\tilde v$, $\tilde w$ on
$\Sigma_\alpha$ with $\tilde v(u)=v$, $\tilde w(u)=w$, and we
extend $\tilde v$, $\tilde w$ arbitrarily as smooth vector fields
$\tilde v,\tilde w\colon H^1 \to H^1$. Using
\eqref{eq:lagrange-mult-basic}, we then have
\begin{align*}
  \rmd ^2 J_\alpha(u)[v,w] &= \partial_{\tilde v} \partial_{\tilde w}
  \Phi(u)= \partial_{\tilde v} \big|_{u} \scp{\nabla \Phi, w
} = \scp{\rmD^2 \Phi(u) v,w}+
  \scp{\nabla \Phi(u),  \rmd  \tilde w(u) v}\\
  &=\scp{\rmD^2 \Phi(u) v,w} + \lambda (u , \rmd  \tilde
  w(u) v)_2 =\scp{\rmD^2 \Phi(u) v,w} - \lambda ( v ,
  w)_2,
\end{align*}
where the last equality follows from the fact that the function
$u_* \mapsto h(u_*):= (u_*, \tilde w (u_*))_2$ vanishes on
$\Sigma_\alpha$ and therefore
$0=\partial_{\tilde v} h(u)= ( v , w)_2 + (u , \rmd  w (u)
v)_2$.  

We need the following definitions.
\begin{definition}
  \th\label{def:various}
  Let $u\in H^1$ be a critical point of $J_\alpha$ with Lagrange
  multiplier $\lambda$.  Put $\Lambda\coloneqq T_u \Sigma_\alpha$ and let
  $P  \in \cL(H^1,\Lambda)$ denote the
  $\scp{\cdot,\cdot}$-orthogonal projection onto $\Lambda$.  Moreover, put
  $B\coloneqq \rmD^2 \Phi(u)-\lambda S$.
  \begin{enumerate}[label=\textup{(\alph*)}]
  \item The {\em Morse index}
    $m(u) \in \dN \cup \{0, \infty\}$ of $u$ with respect to
    $J_\alpha$ is defined as
    \begin{equation*}
      m(u):= \sup \{ \dim  Z\mid \text{$ Z$ subspace of
        $\Lambda$ with $\scp{Bv,v}<0$ for all
        $v \in  Z \ssm \{0\}$}\}.
    \end{equation*}   
  \item The {\em free Morse index}
    $m_\rmf(u) \in \dN \cup \{0, \infty\}$ of $u$ is defined as
    \begin{equation*}
      m_\rmf(u):= \sup \{ \dim  Z\mid \text{$ Z$ subspace of
        $H^1$ with $\scp{Bv,v}<0$
        for all $v \in  Z \ssm \{0\}$}\}.
    \end{equation*}   
  \item We call $u$ a {\em nondegenerate} critical point of
    $J_\alpha$ if $P  B|_\Lambda $ is an isomorphism
    of $\Lambda$.
  \item We call $u$ {\em freely nondegenerate} if $B$ is an
    isomorphism of $H^1$. In this case we put
    \begin{equation*}
      z_u:= B^{-1}S u \in H^1.
    \end{equation*}
  \end{enumerate}
\end{definition}
For a critical point $u\in H^1$ of $J_\alpha$, it is
  clear that
  \begin{equation}
    \label{eq:-cases-morse-index}
    m_\rmf(u) = m(u) \qquad \text{or}\qquad   m_\rmf(u)= m(u)+1.
  \end{equation}
In the case where $u$ is freely nondegenerate, the scalar product $(z_u,u)_2$ determines whether $u$ is nondegenerate and which case occurs in (\ref{eq:-cases-morse-index}). More precisely, we have the following simple but important lemma.  
\begin{lemma}
  \th\label{lem:simple-but-imp}
  Let $u\in H^1$ be a freely nondegenerate critical point 
of $J_\alpha$ with Lagrange multiplier $\lambda$.  
  \begin{enumerate}[label=\textup{(\alph*)}]
  \item\label{item:17} $u$ is nondegenerate if and only if $(z_u,u)_2\not =0$.
  \item\label{item:19} If $m(u)$ is finite and $(z_u,u)_2>0$, then $m_\rmf(u)=m(u)$.
  \item\label{item:20} If $m(u)$ is finite and $(z_u,u)_2<0$, then $m_\rmf(u)=m(u)+1$.
  \end{enumerate}
\end{lemma}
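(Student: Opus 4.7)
The plan is to exploit a $B$-orthogonal direct sum decomposition of $H^1$ derived from the defining equation $B z_u = Su$. Since $\scp{z_u, Su} = (z_u, u)_2$ by \eqref{eq:rel-scp-H-1}, the vector $z_u$ lies outside the hyperplane $\Lambda = \{v \in H^1 \mid \scp{v, Su}=0\}$ precisely when $(z_u, u)_2 \neq 0$, in which case $H^1 = \Lambda \oplus \dR z_u$ as an algebraic direct sum. Using that $B$ is self-adjoint with respect to $\scp{\cdot,\cdot}$ together with $Bz_u = Su$, one has
\begin{equation*}
  \scp{Bw, z_u} = \scp{w, B z_u} = \scp{w, Su} = (w, u)_2 = 0 \qquad \text{for every $w \in \Lambda$,}
\end{equation*}
so the decomposition is $B$-orthogonal, and for $v = w + t z_u$ with $w \in \Lambda$, $t \in \dR$ this yields the diagonalization
\begin{equation*}
  \scp{Bv, v} = \scp{Bw, w} + t^2 (u, z_u)_2.
\end{equation*}

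For part \ref{item:17} this splitting gives both directions at once. If $(z_u, u)_2 \neq 0$ and $w \in \Lambda$ satisfies $PBw = 0$, then $Bw \in \Lambda^\perp = \dR Su = B(\dR z_u)$, so injectivity of $B$ forces $w \in \dR z_u \cap \Lambda = \{0\}$; for surjectivity, any $\xi \in \Lambda$ equals $Bv$ for some $v = w + t z_u$, and then $PBw = P(\xi - t Su) = \xi$ since $Su \perp \Lambda$. Conversely, if $(z_u, u)_2 = 0$, then $z_u \in \Lambda$ is nonzero (because $Bz_u = Su \neq 0$) and $PBz_u = PSu = 0$, exhibiting a nontrivial kernel of $PB|_\Lambda$.

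For parts \ref{item:19} and \ref{item:20}, the diagonalization identifies $\scp{B\cdot,\cdot}$ on $H^1$ with the $B$-orthogonal sum of its restriction to $\Lambda$ and the one-dimensional form $t \mapsto t^2 (u, z_u)_2$ on $\dR z_u$, whose Morse index is $0$ if $(u, z_u)_2 > 0$ and $1$ if $(u, z_u)_2 < 0$. In the case $(u, z_u)_2 > 0$, the projection $\pi_1 \colon H^1 \to \Lambda$ along $\dR z_u$ is injective on any $B$-negative subspace $Z$ (a kernel element $v = tz_u$ would satisfy $\scp{Bv,v} = t^2(u,z_u)_2 \ge 0$) and sends $Z$ onto a $B$-negative subspace of $\Lambda$ (since $\scp{Bw,w} \le \scp{Bv,v} < 0$ for the unique lift $v \in Z$ of $w$), giving $m_\rmf(u) \le m(u)$; the reverse inequality is trivial. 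In the case $(u, z_u)_2 < 0$, for any $B$-negative subspace $W \subset \Lambda$ of dimension $m(u)$, the sum $W \oplus \dR z_u$ is $B$-negative by the diagonalization (the cross terms vanish and both diagonal contributions are negative), yielding $m_\rmf(u) \ge m(u)+1$; conversely any $B$-negative $Z \subset H^1$ satisfies $m(u) \ge \dim(Z \cap \Lambda) \ge \dim Z - 1$, so $m_\rmf(u) \le m(u)+1$.

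The only conceptual point is to notice the $B$-orthogonal splitting and the identity $\scp{Bz_u, z_u} = (u, z_u)_2$; no essentially infinite-dimensional difficulty arises, since the finiteness of $m(u)$ in \ref{item:19} and \ref{item:20} reduces all signature statements to finite-dimensional linear algebra.
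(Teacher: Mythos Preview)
Your proof is correct and follows essentially the same approach as the paper's: both hinge on the algebraic decomposition $H^1 = \Lambda \oplus \dR z_u$ (valid precisely when $(z_u,u)_2 \neq 0$) and the diagonalization $\scp{B(w+tz_u),\,w+tz_u} = \scp{Bw,w} + t^2(u,z_u)_2$, after which parts \ref{item:17}--\ref{item:20} reduce to elementary linear algebra. The only cosmetic differences are that the paper proves \ref{item:17} via a kernel/range argument for $PB$ rather than checking injectivity and surjectivity separately, and for \ref{item:19} it exhibits a $B$-nonnegative subspace of codimension $m(u)$ rather than projecting $B$-negative subspaces into $\Lambda$; for the upper bound in \ref{item:20} the paper simply invokes the trivial observation $m_\rmf(u)\in\{m(u),\,m(u)+1\}$, which is exactly your intersection argument $\dim(Z\cap\Lambda)\ge\dim Z-1$.
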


\begin{proof}
In the following, we let $\cN(L)$ denote the kernel and $\cR(L)$ denote the range of a linear operator $L$. Moreover, we let $B$, $P$ and $\Lambda$ be as in \th\ref{def:various}. 

\textbf{\ref{item:17}:} By definition, we have
$z_u = B^{-1} Su \in \cN(PB) \setminus \{0\}$. Moreover, we have
$\dim \cN(PB)=1$ since $B\colon H^1 \to H^1$ is an
isomorphism. Consequently,
\begin{equation*}
  \cN(PB) = \opspan(z_u) \qquad \text{and}\qquad \cR(PB)= \Lambda. 
\end{equation*}
Now, again by definition, $u$ is nondegenerate if and only if
$P B|_\Lambda\colon \Lambda \to \Lambda$ is an isomorphism, and
this holds true if and only if $H^1 = \opspan(z_u) \oplus \Lambda$.  By
(\ref{eq:tan-space}), the latter property is equivalent to
$(z_u,u)_2 \not =0$.

\textbf{\ref{item:19} and \ref{item:20}:} Since $\codim \Lambda=1$ and $z_u\notin \Lambda$, there are, for every
  $\phi\in H^1$, unique elements $\mu\in\dR$ and $w\in \Lambda$ such that
  \begin{equation}
    \label{eq:9}
    \phi=\mu z_u+w.
  \end{equation}
  Recall that $\opspan(Su)=\cN(P )=\Lambda^\bot$.  We therefore have the
  representation
  \begin{equation}
    \label{eq:10}
    \begin{aligned}
    \scp{B\phi,\phi}
      &=\mu^2\scp{Bz_u,z_u}+2\mu\scp{Bz_u,w}+\scp{Bw,w}\\
      &=\mu^2\scp{Su,z_u}+2\mu\scp{Su,w}+\scp{Bw,w}\\
      &=\mu^2(z_u,u)_2+\scp{Bw,w}.
    \end{aligned}
  \end{equation}
  To see \ref{item:19}, recall that the definition of $m(u)$
  implies the existence of a subspace $Z \subset \Lambda$ of
  codimension $m(u)$ in $\Lambda$ such that
  $\scp{B \phi, \phi} \ge 0$ for all $\phi \in
  Z$. Since $z_u \notin \Lambda$, the space
  $\widetilde Z:= \opspan(z_u) \oplus Z $ has at most codimension $m(u)$
  in $H^1$. Moreover, in the representation \eqref{eq:9} for
  $\phi\in \widetilde Z$ we find $w \in Z$.  Therefore,
  \eqref{eq:10} yields $\scp{B\phi,\phi} \ge\scp{Bw,w}\ge0$.
  This implies $m_\rmf(u) \le m(u)$, and thus equality follows by
  \eqref{eq:-cases-morse-index}.

  To see \ref{item:20}, let $Z \subset \Lambda$ be an
  $m(u)$-dimensional subspace such that
  $\scp{B w, w}<0$ for all $w \in Z \ssm
  \{0\}$. Put $\widetilde Z:= \opspan(z_u) \oplus Z$. Then
  $\dim \widetilde Z = m(u)+1$, and for the representation
  \eqref{eq:9} for $\phi\in \widetilde Z \ssm \{0\}$ we find
  $w \in Z$.  Then \eqref{eq:10} implies $\scp{B \phi, \phi} < 0$
  since either $\mu \not = 0$ or $w \in Z \ssm
  \{0\}$. Consequently, $m_\rmf(u) \ge m(u)+1$, and thus equality
  follows by \eqref{eq:-cases-morse-index}.
\end{proof}

Parts \ref{item:19} and \ref{item:20} of \th\ref{lem:simple-but-imp} can also be derived from  \cite[(2.7) of
Theorem~2]{MR772868}, see also \cite{MR957686}.  For the
convenience of the reader we gave a simple direct proof.

\begin{definition}
  \th\label{def:fully-nondegenerate-2} 
A critical point $u\in H^1$ of $J_\alpha$ will be called 
{\em fully
    nondegenerate} if $u$ is freely nondegenerate and 
the equivalent properties in \th\ref{lem:simple-but-imp}\ref{item:17} hold true.\end{definition}

\th\ref{def:fully-nondegenerate-2} is consistent with
\th\ref{def:fully-nondegenerate}, as the function $z_u = B^{-1} S u$ defined in \th\ref{def:various} is uniquely determined as the weak solution of \eqref{eq:determining-equation} with $g=u$.

In the next lemma, we show that nondegenerate local minima of $J_\alpha$ are fully nondegenerate critical points.

\begin{lemma}
  \th\label{lem:simple-but-imp-1}
Suppose that \ref{item:7} holds true, and let $u\in H^1$ be a nondegenerate critical point 
of $J_\alpha$ with $m(u)=0$ (i.e., $u$ is a nondegenerate
    local minimum of $J_\alpha$). Then $u$ is fully nondegenerate, and either $u$ or $-u$ is a positive function.
\end{lemma}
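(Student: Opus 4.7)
The plan is to verify three things: (i) $u$ is freely nondegenerate; (ii) $(z_u,u)_2\ne 0$; and (iii) $u$ or $-u$ is strictly positive. Throughout I set $B:=\rmD^2\Phi(u)-\lambda S$, $\Lambda:=T_u\Sigma_\alpha$, and $e:=Su/\norm{Su}\in\Lambda^\perp$, so that $H^1=\Lambda\oplus\dR e$ $\scp{\cdot,\cdot}$-orthogonally. The nondegeneracy hypothesis is that $T:=PB|_\Lambda\in\cL(\Lambda)$ is an isomorphism, and $m(u)=0$ gives $\scp{Bv,v}\ge 0$ for $v\in\Lambda$; together with the self-adjointness of $T$, these yield the coercivity estimate $\scp{Bv,v}\ge c\norm{v}^2$ for $v\in\Lambda$ and some $c>0$. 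The extra ingredient is \th\ref{lem:h-5-consequence}, which invokes \ref{item:7} to give $\scp{Bu,u}<0$.

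For (i), I will show $B\in\cL(H^1)$ is bijective via a Schur complement argument. Put $\gamma:=PBe\in\Lambda$, $K:=T^{-1}$, and
\[
  \tau:=\scp{Be,e}-\scp{K\gamma,\gamma}.
\]
Completion of the square over $w\in\Lambda$ in the representation $v=w+\nu e$ yields $\min_{w\in\Lambda}\scp{B(w+\nu e),w+\nu e}=\nu^2\tau$. Since $u=Pu+s e$ with $s=\scp{u,e}=\alpha/\norm{Su}>0$ and $\scp{Bu,u}<0$, this forces $\tau<0$. Given $h\in H^1$, the equation $Bv=h$ with ansatz $v=w+\nu e$ splits: projecting with $P$ determines $w=K(Ph-\nu\gamma)$ in terms of $\nu$, and the $e$-component becomes a single linear equation in $\nu$ with coefficient $\tau\ne 0$, so $\nu$ is uniquely determined. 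Hence $B$ is a bijection of $H^1$, and $u$ is freely nondegenerate. For (ii), combining this with the constrained nondegeneracy and \th\ref{lem:simple-but-imp}\ref{item:17} yields $(z_u,u)_2\ne 0$, establishing full nondegeneracy in the sense of \th\ref{def:fully-nondegenerate-2}.

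For (iii), assume for contradiction that $u$ changes sign, so $u^+\ne 0\ne u^-$. With $\beta:=-\abs{u^+}_2^2/\abs{u^-}_2^2<0$, set $v:=u^++\abs\beta u^-$; by construction $(v,u)_2=0$, so $v\in\Lambda\ssm\{0\}$. Testing the weak equation $-\Delta u+(V-\lambda)u=f(u)$ against $u^+$ and $-u^-$ and subtracting $\int f'(u)(u^\pm)^2$ expresses $\scp{Bu^\pm,u^\pm}$ as an integral of $sf(s)-s^2f'(s)$; since the pointwise inequality $sf(s)\le s^2f'(s)$ on $\dR$ is equivalent to the monotonicity of $s\mapsto f(s)/\abs s$ assumed in \ref{item:7}, we obtain $\scp{Bu^\pm,u^\pm}\le 0$. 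Disjointness of supports gives $\scp{Bu^+,u^-}=0$, so
\[
  \scp{Bv,v}=\scp{Bu^+,u^+}+\beta^2\scp{Bu^-,u^-}\le 0,
\]
contradicting the coercivity of $v\mapsto\scp{Bv,v}$ on $\Lambda$. Hence $u$ has constant sign, and the strong maximum principle applied to the linear Schrödinger equation $-\Delta u+(V-\lambda-f(u)/u)u=0$ (whose zero-order coefficient is bounded because $f(s)/s\to 0$ as $s\to 0$ by \ref{item:3}) yields the strict positivity of $u$ or $-u$.

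The principal obstacle is step (i): $B$ is generally not a compact perturbation of a fixed isomorphism, so standard Fredholm theory is not directly available. The Schur complement perspective sidesteps this by reducing global invertibility of $B$ on $H^1$ to the nonvanishing of the single scalar $\tau$, whose negativity is supplied by the negative-direction estimate $\scp{Bu,u}<0$ of \th\ref{lem:h-5-consequence}.
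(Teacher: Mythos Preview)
Your proof is correct and follows the same overall architecture as the paper's: establish free nondegeneracy of $B$ using the negative direction supplied by \th\ref{lem:h-5-consequence}, deduce full nondegeneracy via \th\ref{lem:simple-but-imp}\ref{item:17}, and then rule out sign change by producing a tangent vector on which the Hessian is nonpositive.

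The genuine difference lies in step~(i). The paper argues abstractly: from the surjectivity of $PB|_\Lambda$ it deduces that $\cR(B)$ has codimension at most one and is hence closed; from injectivity it gets $\dim\cN(B)\le 1$; and then it excludes $\dim\cN(B)=1$ because that would force $H^1=\cN(B)\oplus\Lambda$ and make $\scp{B\cdot,\cdot}$ positive semidefinite on all of $H^1$, contradicting \th\ref{lem:h-5-consequence}. Your Schur complement route is more constructive: you reduce invertibility of $B$ on $H^1=\Lambda\oplus\dR e$ to the nonvanishing of the scalar $\tau=\scp{Be,e}-\scp{T^{-1}\gamma,\gamma}$, and then read off $\tau<0$ from the variational characterization $\min_{w\in\Lambda}\scp{B(w+\nu e),w+\nu e}=\nu^2\tau$ combined with $\scp{Bu,u}<0$. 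Both arguments hinge on the same input (coercivity on $\Lambda$ plus one negative direction), but yours yields an explicit inverse and the sign of $\tau$, which in fact encodes the sign of $(z_u,u)_2$; the paper's argument is shorter and avoids the completion-of-the-square computation. For step~(iii), the two arguments are essentially identical---the paper phrases it as ``$\opspan(u^+,u^-)$ is a two-dimensional negative subspace intersecting $\Lambda$ nontrivially,'' while you exhibit the intersection vector $v=u^++|\beta|u^-$ directly.
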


\begin{proof}
We continue using the notation from the proof of \th\ref{lem:simple-but-imp}.  
Since $u$ is nondegenerate, we have $\Lambda=\cR(P B|_\Lambda)$ and therefore 
  $H^1=\cN(P)+ \cR(B|_\Lambda)$.  This implies
  $\codim \cR(B) \le \codim \cR(B|_\Lambda)\le 1$ and hence
  that $\cR(B)$ is closed. Since
  $P B|_\Lambda$ is injective, $\cN(B)\cap \Lambda=\{0\}$ and hence
  $\dim\cN(B)\le 1$.  If $\dim\cN(B)=1$ were true, then we would
  have $H^1=\cN(B)\oplus \Lambda$.  Since the quadratic form
  $\scp{B\cdot,\cdot}$ is positive definite on $\Lambda$ it would be
  positive semidefinite on $H^1$, in contradiction with
  \th\ref{lem:h-5-consequence}.  Therefore $\cN(B)=\{0\}$ and $B$,
  being symmetric with closed range, is an isomorphism. Hence $u$ is freely nondegenerate, and thus it is also fully nondegenerate.  

  Next, we suppose by contradiction that $u$ changes sign. A
  variant of the proof of \th\ref{lem:h-5-consequence} then shows
  that the quadratic form $\scp{B\cdot,\cdot}$ is negative
  definite on the two-dimensional subspace
  $\opspan(u^+,u^-) \subset H^1$, where $u^\pm\coloneqq\max\{0,\pm u\}$
  denotes the positive, respectively negative part of $u$. Since
  this space has a nontrivial intersection with $\Lambda$, we
  thus obtain a contradiction to the assumption $m(u)=0$.
\end{proof}

Next we add an observation for the case where
$u$ is a fully nondegenerate critical point of
$J_\alpha$ and a {\em positive} function.

\begin{lemma}
  \th\label{lem:positive-spectrum} Let $u \in H^1$ be a fully
  nondegenerate critical point of $J_\alpha$ with Lagrangian
  multiplier $\lambda$ such that $u$ is a positive function and
  $f(u) \ge 0$ on $\dR^N$, $f(u) \not \equiv 0$. Then we
  have
  \begin{equation}
    \label{eq:lambda-spectrum}
    \lambda < \inf \sigma(-\Delta + V).  
  \end{equation}
\end{lemma}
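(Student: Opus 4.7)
The plan is to prove $\lambda < \mu \coloneqq \inf \sigma(-\Delta + V)$ by combining three ingredients: elliptic regularity with decay, an Agmon--Allegretto--Piepenbrink (Picone type) identity yielding $\lambda \le \mu$, and the exclusion of $\lambda = \mu$ through a case distinction based on the nature of $\mu$ as a point of $\sigma(-\Delta+V)$.

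First I would run a standard elliptic bootstrap on $-\Delta u + (V-\lambda)u = f(u)$: combined with $V \in L^\infty$ and the subcritical growth in \ref{item:3}, this yields $u \in L^\infty \cap C^{1,\alpha}_{\loc}$, and uniform continuity together with $u \in L^2$ gives $u(x) \to 0$ as $\abs{x}\to\infty$; since $f'(0)=0$, also $f'(u(\cdot)) \in L^\infty$ is continuous and vanishes at infinity. Because $u > 0$ is continuous, $\phi^2/u$ lies in $H^1$ for every $\phi \in C_c^\infty(\dR^N)$; testing the equation against $\phi^2/u$ and invoking the pointwise identity $\nabla u \cdot \nabla(\phi^2/u) = \abs{\nabla \phi}^2 - u^2 \abs{\nabla(\phi/u)}^2$ yields
\[
  \int_{\dR^N}(\abs{\nabla \phi}^2 + V \phi^2 - \lambda \phi^2) = \int_{\dR^N} u^2 \abs{\nabla(\phi/u)}^2 + \int_{\dR^N} \frac{f(u)}{u} \phi^2 \ \ge\ 0,
\]
where both right-hand integrals are nonnegative since $f(u)\ge 0$ and $u>0$. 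Density of $C_c^\infty$ in $H^1$ and the variational characterization of $\mu$ then give $\lambda \le \mu$.

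To rule out $\lambda = \mu$, I would split into two cases depending on whether $\mu$ is an $L^2$-eigenvalue of $L_0 \coloneqq -\Delta + V$. If it is, then by Perron--Frobenius / the strong maximum principle for Schrödinger operators there is a positive eigenfunction $\psi_0 \in H^2$ with $L_0 \psi_0 = \mu \psi_0$; pairing $L_0 u = \lambda u + f(u)$ with $\psi_0$ in $L^2$ and using self-adjointness gives $(\mu-\lambda)(u,\psi_0)_2 = (f(u),\psi_0)_2 > 0$, forcing $\lambda < \mu$. If $\mu$ is not an $L^2$-eigenvalue, then $\mu \in \sigma_\rmess(L_0)$; since $f'(u)$ is bounded and vanishes at infinity, multiplication by $f'(u)$ is a relatively compact perturbation of $L_0$, so Weyl's theorem gives $\sigma_\rmess(L_0 - f'(u)) = \sigma_\rmess(L_0) \ni \mu$, whence $\mu \in \sigma(L_0 - f'(u))$. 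The full nondegeneracy of $u$ (through condition~(i) of \th\ref{def:fully-nondegenerate}) is precisely the statement that $L_0 - f'(u) - \lambda\colon H^1 \to H^{-1}$ is an isomorphism, equivalently $\lambda \notin \sigma(L_0 - f'(u))$. Hence $\lambda \ne \mu$, and combined with the previous step we conclude $\lambda < \mu$.

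The main obstacle is precisely the exclusion of the boundary case $\lambda = \mu$: the Picone identity only delivers the weak inequality $\lambda \le \mu$, and to upgrade to strict inequality one must separately handle $\mu$ as an isolated $L^2$-eigenvalue (via the Perron--Frobenius pairing, where full nondegeneracy is not used) and $\mu$ as a non-eigenvalue point of the essential spectrum (where the relatively compact perturbation argument together with the full nondegeneracy of $u$ is crucial). The elliptic regularity, the Picone identity, and the Perron--Frobenius pairing are otherwise standard.
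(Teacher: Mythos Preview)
Your argument is correct, and it reaches the conclusion by a somewhat different route than the paper. The paper introduces the auxiliary operator $L = -\Delta + V - f(u)/u$, observes that $u$ is a positive eigenfunction of $L$ with eigenvalue $\lambda$, uses full nondegeneracy and Weyl's theorem to see that $\lambda \notin \sigma_\rmess(L)$, and then invokes the Perron--Frobenius principle (a positive eigenfunction corresponds to the bottom of the spectrum) to conclude $\lambda = \inf\sigma(L) \le \inf\sigma(L_0)$; strict inequality is then obtained by a Rayleigh-quotient contradiction using a positive ground state of $L_0$. Your Picone identity bypasses the auxiliary operator $L$ entirely and gives $\lambda \le \mu$ in one stroke, which is arguably more elementary. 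For the strict inequality, your case distinction is slightly more general in that it does not presuppose $\lambda \notin \sigma_\rmess(L_0)$ at the outset: your Case~A (eigenvalue) handles the endpoint via a direct $L^2$-pairing rather than a Rayleigh quotient, and your Case~B coincides with the paper's use of full nondegeneracy plus Weyl. The paper's organization is a bit more streamlined because it establishes $\lambda \notin \sigma_\rmess(L_0)$ first and thereby reduces immediately to the eigenvalue case; your version trades this economy for not needing the ``positive eigenfunction $\Rightarrow$ ground state'' step.
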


\begin{proof}
  Since $u$ is freely nondegenerate, we see that
  \begin{equation}
\label{eq:persson-0}   
 \lambda \not \in \sigma(-\Delta +V -f'(u)).
  \end{equation}
  Moreover, $u(x) \to 0$ as $|x| \to \infty$ by standard elliptic
  estimates, and the same is true for the functions
  $x \mapsto f'(u(x)), \: x \mapsto
  \frac{f(u(x))}{u(x)}$. Consequently, by (\ref{eq:persson-0}),
  Theorem~14.6 and the proof of Theorem~14.9 in \cite{MR1361167}
  we have for $L_0\coloneqq-\Delta+V$ and
  $L\coloneqq-\Delta+V-\frac{f(u)}{u}$ that
  \begin{equation*}
    \lambda \notin \sigma_\rmess(-\Delta +V -f'(u))
    =\sigma_\rmess(L_0)
    = \sigma_\rmess(L),
  \end{equation*} 
  where $\sigma_\rmess$ denotes the essential spectrum. Since $u$
  is an eigenfunction of the Schrödinger operator $L$
  corresponding to the eigenvalue $\lambda$, it follows that
  $\lambda$ is isolated in $\sigma(L)$.  Since moreover $u$ is
  positive, it is then easy to see that
  $\lambda= \inf \sigma(L)$, and that $\lambda$ is a simple
  eigenvalue.  On the other hand, the assumption $\frac{f(u)}{u}
  \ge 0$ implies that
  \begin{equation*}
    \inf\sigma(L_0)\ge\inf\sigma(L)=\lambda.
  \end{equation*}
  If $\lambda=\inf\sigma(L_0)$ were true, we could obtain
  from $\lambda\notin\sigma_\rmess(L_0)$ that $\lambda$ is
  also an isolated eigenvalue of $L_0$ with a positive
  eigenfunction $v$.  But then, since $f(u) \not \equiv 0$ by assumption,
  \begin{equation*}
    \lambda
    =\frac{\int_{\dR^N}(\abs{\nabla v}^2+Vv^2)}{\int_{\dR^N}v^2}
    >\frac{\int_{\dR^N}(\abs{\nabla
        v}^2+(V-f(u)/u)v^2)}{\int_{\dR^N}v^2}
    \ge\lambda,
  \end{equation*}
  a contradiction.  Hence $\lambda<\inf\sigma(L_0)$.
\end{proof}

We close this section by introducing the {\em extended Lagrangian} 
\begin{equation*}
  G_\alpha\colon H^1\times\dR\to\dR, \qquad G_\alpha(u,\lambda
  )\coloneqq\Phi(u)-\frac{\lambda}{2} \xlr(){\abs{u}_2^2-\alpha} =
  \Phi(u)-\frac{\lambda}{2} \xlr(){\scp{S u,u} - \alpha}.
\end{equation*}
By definition, $u\in H^1$ is a critical point of $J_\alpha$ with Lagrange
multiplier $\lambda$ if and only if $(u,\lambda )$ is a critical
point of $G_\alpha$. We endow $H^1\times\dR$ with the natural
scalar product
\begin{equation*}
  \scp{(u,s),(v,t)}\coloneqq\scp{u,v}+st.
\end{equation*}
The respective gradient of $G_\alpha$ is
\begin{equation}\label{eq:8}
  \nabla G_\alpha\colon H^1 \times \dR \to H^1 \times \dR,  
  \qquad  \nabla G_\alpha(u,\lambda)=\xlr(){\nabla\Phi(u)- \lambda
    Su,-\frac{1}{2}\xlr(){\abs{u}^2_2- \alpha}}.
\end{equation}
Moreover, we have
\begin{equation}\label{eq:7}
  \rmD^2 G_\alpha(u,\lambda)[(v,\mu)]=\biglr(){\rmD^2\Phi(u)v-\lambda
    S v-\mu Su, -\scp{Su,v}}.
\end{equation}

The operator $\rmD^2G_\alpha(u,\lambda)$ is known in the
literature as the \emph{Bordered Hessian} of $\Phi$ at
$(u,\lambda)$.  It has been used extensively in finite
dimensional settings to discern local extrema of restricted
functionals, see, e.g., \cite{MR1791915, MR759215, MR1321457,
  MR1145891, MR1237226, MR1465784}. We will use it only in
Section~\ref{sec:gluing-bumps} below for a gluing procedure
respecting an $L^2$-constraint.

Although we do not need this property in the present paper, we
note that a critical point $u \in H^1$ of $J_\alpha$ is
nondegenerate if and only if $\rmD^2G_\alpha(u,\lambda)$ is an
isomorphism of $H^1\times\dR$. The proof is straightforward.

\section{Gluing Bumps with $L^2$-Constraint}
\label{sec:gluing-bumps}
This section is devoted to the proof of
\th\ref{thm:two-bumps}, which we reformulate in the
following way for matters of convenience. We continue to use the
notation introduced in Section~\ref{sec:result-about-pert}.

\begin{theorem}
  \th\label{thm:two-bumps-general} Assume \ref{item:1}--\ref{item:3}
  and fix $\alpha>0$.  Given $n\in\dN$, $n\ge 2$, suppose that
  $\olu$ is a fully nondegenerate critical point of
  $J_{\alpha/n}$ with Lagrange multiplier
  $\olambda$.  Let also $(a_k)\subseteq(\dZ^N)^n$ be a sequence
  such that $d(a_k)\to\infty$ as $k\to\infty$. Then there exists
  $k_0 \in \dN$ such that for $k \ge k_0$ there exist critical
  points $u_k$ of $J_{\alpha}$ with Lagrange
  multiplier $\lambda_k$. Moreover, we have
  \begin{equation}\label{eq:24-bump-general}
    \norm{u_k-v_k} \to 0 
    \quad\text{and}\quad\abs{\lambda_k-\olambda} \to 0\quad
    \text{as $k \to \infty$,}\qquad \text{where }  v_k := \sum_{i=1}^n \cT_{a^i_k} \olu \in H^1, 
  \end{equation}
  and the sequence $(u_k)_k$ is uniquely determined by these
  properties for large $k$.  Furthermore, if $\olu$ is a positive
  function and $f(\olu) \ge 0$ on $\dR^N$,
  $f(\olu) \not \equiv 0$, then $u_k$ is positive as well for
  large $k$.
\end{theorem}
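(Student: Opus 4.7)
The strategy is to apply the Shadowing Lemma \th\ref{cor:some-prel-abstr-banach} to the map $h = \nabla G_\alpha\colon H^1 \times \dR \to H^1 \times \dR$ at the approximate critical points $(v_k, \olambda)$, where $v_k = \sum_{i=1}^n \cT_{a_k^i}\olu$. The uniform continuity of $\rmd h = \rmD^2 G_\alpha$ on bounded subsets of $H^1\times\dR$ is immediate from the formula \eqref{eq:7} together with \th\ref{lem:psi-bl-splits}. The remaining two hypotheses of \th\ref{cor:some-prel-abstr-banach} are that $\nabla G_\alpha(v_k, \olambda) \to 0$ and that $T_k := \rmD^2 G_\alpha(v_k, \olambda)$ is invertible with uniformly bounded inverse; the second is the main obstacle.

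To see $\nabla G_\alpha(v_k, \olambda) \to 0$, observe that since $\olu$ is a critical point of $J_{\alpha/n}$ with multiplier $\olambda$ and $V$ is $\dZ^N$-periodic (so $S$ commutes with $\cT_b$ for $b \in \dZ^N$), each translate satisfies $\nabla\Phi(\cT_{a_k^i}\olu) = \olambda S\cT_{a_k^i}\olu$ and $\abs{\cT_{a_k^i}\olu}_2^2 = \alpha/n$. Substituting into \eqref{eq:8} yields
\begin{equation*}
  \nabla G_\alpha(v_k, \olambda) = \Bigl(\sum_{i=1}^n \nabla\Psi(\cT_{a_k^i}\olu) - \nabla\Psi(v_k),\ -\tfrac12 \sum_{i\ne j}(\cT_{a_k^i}\olu, \cT_{a_k^j}\olu)_2\Bigr).
\end{equation*}
The $L^2$ cross terms vanish as $d(a_k)\to\infty$ because $\olu \in L^2$ decays at infinity by standard elliptic regularity. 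The first component tends to $0$ in $H^1$ by iterating the BL-splitting of $\nabla\Psi$ from \th\ref{lem:psi-bl-splits} along the $n$ weakly convergent sequences $\cT_{-a_k^j} v_k \weakto \olu$.

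The main difficulty is the uniform invertibility of $T_k$. Arguing by contradiction, suppose along a subsequence there exist $(w_k, \mu_k) \in H^1 \times \dR$ with $\norm{w_k}^2 + \mu_k^2 = 1$ and $T_k(w_k, \mu_k) \to 0$, i.e.\ by \eqref{eq:7}
\begin{equation*}
  \rmD^2\Phi(v_k) w_k - \olambda S w_k - \mu_k S v_k \to 0 \text{ in } H^1, \qquad \scp{S v_k, w_k} \to 0.
\end{equation*}
For each $i \in \{1,\ldots,n\}$, set $w_k^i := \cT_{-a_k^i} w_k$; after passing to a further subsequence, $w_k^i \weakto w^i$ in $H^1$ and $\mu_k \to \mu_\infty$. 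Translating the first relation by $\cT_{-a_k^i}$, using $\cT_{-a_k^i}v_k \weakto \olu$ and the BL-splitting of $\rmD^2\Psi$ (\th\ref{lem:psi-bl-splits}) to pass to the weak limit, each $w^i$ solves
\begin{equation*}
  B w^i = \mu_\infty S\olu \quad \text{in $H^1$,} \qquad B := \rmD^2\Phi(\olu) - \olambda S,
\end{equation*}
while the $L^2$-constraint becomes $\sum_{i=1}^n (\olu, w^i)_2 = 0$. By full nondegeneracy of $\olu$, $B$ is an isomorphism of $H^1$ and $(z_\olu,\olu)_2 \neq 0$ for $z_\olu = B^{-1}S\olu$. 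Hence $w^i = \mu_\infty z_\olu$ for every $i$, and the constraint forces $n\mu_\infty(z_\olu,\olu)_2 = 0$, so $\mu_\infty = 0$ and all $w^i = 0$. A standard nonlinear vanishing argument, again exploiting the BL-splitting to localize the $H^1$-mass of $w_k$ only near the bump centers, then upgrades this to $\norm{w_k} \to 0$ and $\mu_k \to 0$, contradicting $\norm{w_k}^2 + \mu_k^2 = 1$. With all hypotheses of \th\ref{cor:some-prel-abstr-banach} verified, we obtain for large $k$ a unique zero $(u_k, \lambda_k)$ of $\nabla G_\alpha$ close to $(v_k, \olambda)$ and satisfying \eqref{eq:24-bump-general}.

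For the positivity statement, \th\ref{lem:positive-spectrum} applied to $\olu$ gives $\olambda < \inf\sigma(-\Delta + V)$, and hence $\lambda_k < \inf\sigma(-\Delta + V)$ for large $k$ since $\lambda_k \to \olambda$. I would then redo the Shadowing construction with the truncated nonlinearity $\tilde f(s) := f(s^+)$; since $\tilde f$ agrees with $f$ on the range of any small $H^1$-perturbation of the strictly positive, uniformly bounded $v_k$ (via elliptic $L^\infty$-estimates), the hypothesis check goes through unchanged and, by the uniqueness assertion of \th\ref{cor:some-prel-abstr-banach}, the resulting solution coincides with $u_k$. Testing the equation with $u_k^-$ and using that $-\Delta + V - \lambda_k$ is coercive on $H^1$ forces $u_k^- \equiv 0$, and the strong maximum principle applied to $-\Delta u_k + (V - \lambda_k) u_k = f(u_k) \ge 0$ then yields $u_k > 0$.
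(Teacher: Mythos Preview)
Your main argument is essentially the paper's: apply \th\ref{cor:some-prel-abstr-banach} to $\nabla G_\alpha$ at $(v_k,\olambda)$, verify $\nabla G_\alpha(v_k,\olambda)\to0$ via BL-splitting, and establish uniform invertibility of $\rmD^2 G_\alpha(v_k,\olambda)$ by contradiction. Your extraction of the weak limits is even slightly more direct than the paper's: you pass to the weak limit in the translated equation to obtain $Bw^i=\mu_\infty S\olu$, hence $w^i=\mu_\infty z_\olu$, and then the constraint kills $\mu_\infty$. The paper instead tests against $\cT_{a_k^i}z_\olu$ to get $(w^i,\olu)_2=\mu(\olu,z_\olu)_2$ first, concludes $\mu=0$, and only then shows $Bw^i=0$. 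Both routes are fine; the subsequent upgrade from $w^i=0$ to $\norm{w_k}\to0$ is carried out in the paper exactly along the lines you sketch, by subtracting the translated weak limits and using compactness of $\rmD^2\Psi(\olu)$.

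The positivity argument, however, has a genuine gap. Your key claim that ``$\tilde f$ agrees with $f$ on the range of any small $H^1$-perturbation of $v_k$'' is false: $v_k$ decays to $0$ at infinity, so it is not bounded away from zero, and an arbitrarily small $H^1$-perturbation can make it negative on a set of positive measure. Hence the maps $\nabla G_\alpha$ and $\nabla\tilde G_\alpha$ do \emph{not} coincide on an $H^1$-ball around $(v_k,\olambda)$, and the uniqueness assertion of \th\ref{cor:some-prel-abstr-banach} does not by itself identify the $\tilde f$-solution with $u_k$. The truncation idea can be salvaged by reordering: run the Shadowing lemma for $\tilde f$ (legitimate since $\tilde f\in C^1$ with $\tilde f'(0)=0$, and all evaluations at the positive functions $\olu$, $v_k$ coincide with those for $f$), obtain $\tilde u_k$, test the $\tilde f$-equation with $\tilde u_k^-$ to get $\tilde u_k^-\equiv0$ from coercivity and $\tilde f(\tilde u_k)\tilde u_k^-=0$, conclude $\tilde u_k$ also solves the $f$-equation, and \emph{then} invoke uniqueness for the original $\nabla G_\alpha$ to get $\tilde u_k=u_k$. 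The paper avoids truncation altogether: it tests the original equation with $u_k^-$, uses $\norm{u_k^-}\to0$ (from $\norm{u_k-v_k}\to0$ and $v_k>0$) together with the superlinear growth bound $\int f(u_k^-)u_k^-\le\varepsilon\norm{u_k^-}^2+C\norm{u_k^-}^p+\varepsilon\norm{u_k^-}^{2^*}$, and concludes $u_k^-=0$ for large $k$ directly.
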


The remainder of this section is devoted to the proof of this
theorem. Let $\alpha>0$, $n\ge2$, and $\olu$, $\bar\lambda$ be as
in the statement of the theorem.  Since $\olu$ is nondegenerate
and freely nondegenerate, \th\ref{def:various} and
\th\ref{def:fully-nondegenerate-2} imply that
\begin{equation}
  \label{eq:def-B}
  \text{$B:=\rmD^2 \Phi(\olu)-\olambda  S  \in  \cL(H^1)$ is an isomorphism}
\end{equation}
and that
\begin{equation}
  \label{eq:exist-z-u}
  \text{there exists $z_{\olu} \in H^1$ with $(z_\olu,\olu)_2 \not = 0$ and $B z_{\olu} = S \olu$.}
\end{equation}
Let $(a_k)\subseteq(\dZ^N)^n$ be a sequence such that
$d(a_k)\to\infty$ as $k\to\infty$, and let $v_k \in H^1$ be given
as in \eqref{eq:24-bump-general} for $k \in \dN$.  For simplicity
we assume that
\begin{equation}
  a^1_k=0 \qquad\text{for all }k\in\dN.\label{eq:51}
\end{equation}
We wish to prove that
\begin{equation}
  \label{eq:3}
  \nabla G_{\alpha}(v_k,\olambda )\to 0\qquad\text{as } k\to\infty
\end{equation}
and that
\begin{equation}\label{eq:5}
  \parbox{.8\linewidth}{$\rmD^2 G_{\alpha}(v_k,\olambda ) \in \cL(H^1\times\dR)$
    is invertible for large $k$,
    and the norm of the inverse remains bounded as $k\to\infty$.}
\end{equation}
Once these assertions are proved, we may apply
\th\ref{cor:some-prel-abstr-banach} with
$ h \coloneqq\nabla G_\alpha$ to find, for $k$ large, critical
points $u_k$ of $J_\alpha$ with Lagrange multiplier
$\lambda_k$ such that \eqref{eq:24-bump-general} holds true.
Here we use the fact that the sequence $(v_k)_k$ is bounded in
$H^1$ and that $\rmD^2 \Phi$ is uniformly continuous on bounded
subsets of $H^1$.

By the BL-splitting properties, \eqref{eq:8} implies
\begin{equation*}
  \biggnorm{\nabla G_{\alpha}(v_k,\olambda )
    -\sum_{i=1}^n\nabla G_{\alpha/n}(\cT_{a^i_k}\olu,\olambda )}_{\cL(H^1 \times \dR)}
  \to0.
\end{equation*}
Since
$\norm{\nabla G_{\alpha/n}(\cT_{a^i_k}\olu,\olambda )}_{\cL(H^1
  \times \dR)} = \norm{\nabla G_{\alpha/n}(\olu,\olambda
  )}_{\cL(H^1 \times \dR)} = 0$ for $i=1,2,\dots,n$ and every
$k$, \eqref{eq:3} follows.

We now turn to the (more difficult) proof of \eqref{eq:5}.  For
this we consider the operators
\begin{equation*}
  B_k:=\rmD^2\Phi(v_k)-\olambda  S \in \cL(H^1)
  \qquad \text{for $k \in \dN$.} 
\end{equation*}
and we claim that
\begin{equation}
  \label{eq:50}
  \cT_{-a^i_k} B_k\cT_{a^i_k} w \to B w \quad \text{in $H^1$ for $w \in H^1$, $i=1,2,\dots,n$}.
\end{equation}
To see this, we recall that $\rmD^2\Psi$ BL-splits and that
therefore
\begin{equation}
  \label{eq:61}
  \rmD^2\Psi(v_k)=\sum_{j=1}^n\rmD^2\Psi(\cT_{a^j_k} \olu)+o(1)\qquad \text{in $\cL(H^1)$,}
\end{equation}
which implies that
\begin{equation}
  \label{eq:equality-B_k}
  B_k= I-\olambda S - \rmD^2\Psi(v_k) = I-\olambda S- \sum_{j=1}^n\rmD^2\Psi(\cT_{a^j_k} \olu)+o(1)\qquad \text{in $\cL(H^1)$.}
\end{equation}
It is easy to see that
\begin{equation}
  \label{eq:17}
  \cT_{-a^i_k}\rmD^2\Psi(\cT_{a^i_k} \bar
  u)\cT_{a^i_k}=\rmD^2\Psi(\olu)
  \qquad\text{for $k \in \dN$ and $i=1,\dots,n$.}
\end{equation}
Moreover, if $i\neq j$, then for $w \in H^1$ we have
\begin{equation}
  \label{eq:62}
  \rmD^2\Psi(\cT_{a^j_k} \olu)\cT_{a^i_k} w = \cT_{a^j_k} \cT_{-a^j_k}\rmD^2\Psi(\cT_{a^j_k} \olu)\cT_{a^j_k}  \cT_{a^i_k-a^j_k} w= 
  \cT_{a^j_k}\rmD^2\Psi(\olu) \cT_{a^i_k-a^j_k} w    \to 0 
\end{equation}
in $H^1$, since $\cT_{a^i_k-a^j_k} w \rightharpoonup 0$ and
$\rmD^2\Psi(\olu) \in \cL(H^1)$ is a compact operator. Combining
\eqref{eq:equality-B_k}--\eqref{eq:62} and recalling that $S$
commutes with $\cT_{a^i_k}$, we find that
\begin{align*}
  \cT_{-a^i_k} B_k\cT_{a^i_k} w &= (I-\olambda S)w - \sum_{j=1}^n  \cT_{-a^i_k} \rmD^2\Psi(\cT_{a^j_k} \olu) \cT_{a^i_k}w  +o(1)\\
  &= (I-\olambda S)w -\rmD^2\Psi(\olu) w+o(1) = B w+o(1) \qquad
  \text{as $k \to \infty$}
\end{align*}
for $w \in H^1$ and $i=1,\dots,n$, as claimed in \eqref{eq:50}.

We note that \eqref{eq:50} implies that
\begin{equation}
  \label{eq:50-cor}
  \cT_{-a^i_k} B_k \cT_{a^j_k} w = \cT_{a^j_k-a^i_k} \cT_{-a^j_k}  B_k \cT_{a^j_k} w = \cT_{a^j_k-a^i_k} B w +o(1) \rightharpoonup 0  
  \quad \text{in $H^1$}
\end{equation}
for $w \in H^1$ and $i \not = j$.  We now prove \eqref{eq:5} by
contradiction. Supposing that \eqref{eq:5} does not hold true, we
find, after passing to a subsequence, that there are $w_k\in H^1$
and $\mu_k\in\dR$ such that $\norm{w_k}^2+\mu_k^2=1$ and
$\rmD^2 G_\alpha(v_k,\olambda )[(w_k,\mu_k)]\to0$. By
\eqref{eq:7} this implies
\begin{equation}
  \label{eq:4}
  B_k w_k-\mu_k Sv_k \to 0  \qquad \text{in $H^1$}
\end{equation}
and
\begin{equation}
  \label{eq:6}
  (v_k,w_k)_2 \to 0 \qquad \text{in $\dR$.}
\end{equation}
Define for $i=1,2,\dots, n$, possibly after passing to a
subsequence, the functions
\begin{equation*}
  w^i\coloneqq \wlim_{k\to\infty} \cT_{-a^i_k} w_k \in H^1
\end{equation*}
and $\mu:= \lim _{k \to \infty} \mu_k$. Let
$z_\olu \in H^1$ be given as in \eqref{eq:exist-z-u}. Forming the
$H^1$-scalar product of \eqref{eq:4} with $\cT_{a^i_k} z_{\olu}$
and using \eqref{eq:50} together with the fact that
$\cT_{-a^i_k} v_k \rightharpoonup \olu$ in $H^1$, we obtain that
\begin{align*}
  o(1) &=\scp{ B_k w_k ,\cT_{a^i_k} z_\olu}
  -\mu_k\scp{Sv_k ,\cT_{a^i_k} z_\olu}= \scp{   w_k , B_k \cT_{a^i_k} z_\olu}  - \mu_k (v_k , \cT_{a^i_k} z_\olu)_2\\
  &=\scp{ \cT_{-a^i_k} w_k, \cT_{-a^i_k} B_k \cT_{a^i_k} z_\olu
  }-\mu_k (\cT_{-a^i_k} v_k, z_\olu)_2 =
  \scp{w^i,  B  z_\olu }- \mu (\olu,  z_\olu)_2 + o(1)\\
  &= \scp{w^i, S \olu }- \mu (\olu, z_\olu)_2 +o(1)= (w^i,
  \olu)_2- \mu (\olu, z_\olu)_2 +o(1)
\end{align*}
for $i=1,\dots,n$. Hence
\begin{equation*}
  (w^i,  \olu)_2= \mu (\olu,  z_\olu)_2 \qquad \text{for $i=1,\dots,n$.}
\end{equation*}
By \eqref{eq:6} we thus have that
\begin{equation*}
  0 =\lim_{k \to \infty}(v_k,w_k)_2 = \lim_{k \to
    \infty}\sum_{i=1}^n (\cT_{a^i_k} \olu, w_k)_2 = \lim_{k \to
    \infty} \sum_{i=1}^n ( \olu, \cT_{-a^i_k} w_k)_2 = \sum_{i=1}^n
  ( \olu, w^i)_2 = n \mu (\olu, z_\olu)_2.
\end{equation*}
Since $(\olu, z_\olu)_2 \not =0$, this gives $\mu = 0$. Hence
\eqref{eq:4} reduces to
\begin{equation}
  \label{eq:4-reduced}
  B_k w_k \to 0 \qquad \text{in $H^1$ as $k \to \infty$.}
\end{equation}
We now set
\begin{equation*}
  z_k \coloneqq w_k-\sum_{j=1}^n \cT_{a^j_k} w^j\qquad \text{for
    $k \in \dN$,}
\end{equation*}
so
\begin{equation}
  \label{eq:55}
  \cT_{-a^i_k} z_k\weakto0 \qquad \text{for $i=1,\dots,n.$}
\end{equation}
By \eqref{eq:50},~\eqref{eq:50-cor} and \eqref{eq:4-reduced} we
have
\begin{equation}
  \begin{aligned}
    0 = \wlim_{k \to \infty} \cT_{-a^i_k} B_k w_k &=  \wlim_{k \to \infty} \Bigl[ \sum_{j=1}^n  \cT_{-a^i_k} B_k \cT_{a^j_k} w^j +   \cT_{-a^i_k} B_k z_k\Bigr] \\
    &= B w^i + \wlim_{k \to \infty} \cT_{-a^i_k} B_k z_k.
  \end{aligned}
  \label{eq:4-reduced-1}
\end{equation}
Moreover,
\begin{equation}
  \label{eq:z_k-olu-strongly}
  \rmD^2 \Psi(\olu) \cT_{-a^i_k} z_k  \to 0 \qquad \text{in $H^1$ for $i=1,\dots,n$}
\end{equation}
by \eqref{eq:55} and since $\rmD^2 \Psi(\olu) \in \cL(H^1)$ is a
compact operator, which by \eqref{eq:17} implies that
\begin{equation}
  \label{eq:z_k-olu-strongly-1}
  \cT_{-a^i_k} \rmD^2 \Psi(\cT_{a^j_k}\olu) z_k =  \cT_{a^j_k-a^i_k}\rmD^2 \Psi(\olu) \cT_{-a^j_k} z_k \to 0 \qquad \text{in $H^1$}
\end{equation}
for $i,j=1,\dots,n$. Using \eqref{eq:equality-B_k} again, we obtain
\begin{align*}
  \wlim_{k \to \infty} \cT_{-a^i_k} B_k z_k &= \wlim_{k \to \infty}  \Bigl(\cT_{-a^i_k} (I-\olambda S) z_k-   \sum_{j=1}^n\cT_{-a^i_k} \rmD^2 \Psi(\cT_{a^j_k}\olu) z_k\Bigr)\\
  &= \wlim_{k \to \infty} (I-\olambda S) \cT_{-a^i_k} z_k = 0
\end{align*}
for $i=1,\dots,n$. Combining this with \eqref{eq:4-reduced-1}, we
conclude that $B w^i =0$ for $i=1,\dots,n$ and thus
\begin{equation*}
  w^i = 0 \qquad \text{for $i=1,\dots,n$}
\end{equation*}
by \eqref{eq:def-B}. We therefore have $w_k= z_k$ for all
$k$. Recalling \eqref{eq:4-reduced}, \eqref{eq:equality-B_k},
\eqref{eq:51}, and choosing $i=1$ in \eqref{eq:z_k-olu-strongly}
and \eqref{eq:z_k-olu-strongly-1}, we find
\begin{align*}
  o(1) = B_k w_k=B_k z_k &=(I-\olambda S) z_k  -\sum_{j=1}^n \rmD^2 \Psi(\cT_{a^j_k}\olu) z_k +o(1)=  (I- \olambda S)z_k +o(1)\\
  &= (I- \olambda S)z_k -\rmD^2 \Psi(\olu) z_k+o(1) = B z_k+o(1)=B
  w_k + o(1).
\end{align*}
and thus $w_k \to 0$ in $H^1$ by \eqref{eq:def-B}.  Since
$\mu=0$, this contradicts our assumption that
$\norm{w_k}^2+\mu_k^2=1$ for all $k$.  This proves \eqref{eq:5},
as desired.

In the following we assume $N\ge3$.  The cases $N=1,2$ are proved
similarly, ignoring those terms below that include the critical
exponent $2^*$.

As remarked above, applying \th\ref{cor:some-prel-abstr-banach}
with $ h \coloneqq\nabla G_\alpha$ now yields, for $k$ large,
critical points $u_k$ of $J_\alpha$ with Lagrange
multiplier $\lambda_k$ such that \eqref{eq:24-bump-general} holds
true. To finish the proof of \th\ref{thm:two-bumps-general}, we
now assume that $\olu \in H^1$ is positive with $f(\olu) \ge 0$
in $\dR^N$, $f(\olu) \not \equiv 0$, and we show that $u_k$ is
also positive for $k$ large. By \th\ref{lem:positive-spectrum} we
then have $\olambda < \inf \sigma(-\Delta + V) =\gamma$, so
\begin{equation*}
  \int_{\dR^N} \Bigl( \abs{\nabla v}^2 + [V-\olambda] \abs{v}^2 \Bigr)
  \ge (\gamma-\olambda) \norm{v}^2 \qquad \text{for all $v \in H^1$.}
\end{equation*}
On the other hand, for fixed $\varepsilon\in (0,\gamma-\olambda)$ it
easily follows from \ref{item:3}, Sobolev embeddings, the
representation \eqref{eq:22}, and \eqref{eq:26}, that there is a
constant $C>0$ such that
\begin{equation*}
  \int_{\dR^N} f(v) v
  \le \varepsilon\norm{v}^2+ C \norm{v}^{p} +\varepsilon\norm{v}^{2^*} \qquad \text{for
    $v \in H^1$.}
\end{equation*}
Moreover, since $v_k$ is positive, \eqref{eq:24-bump-general}
implies that $u_k^- := \min \{u_k,0\} \to 0$ in $H^1$ as
$k \to \infty$. However, we have
\begin{align*}
  0 &=  \int_{\dR^N}\Bigl(-\Delta u_k + [V- \lambda_k]u_k- f(u_k)\Bigr)u_k^-   \\
  &= \int_{\dR^N} \Bigl( \abs{\nabla u_k^-}^2 + [V-\lambda_k]
  \abs{u_k^-}^2 \Bigr) - \int_{\dR^N} f(u_k^-)u^-_k
\end{align*}
and therefore
\begin{align*}
  (\gamma-\olambda)\norm{u_k^-}^2
    &\le \int_{\dR^N} \Bigl( \abs{\nabla u_k^-}^2 + [V-\olambda]
    \abs{u_k^-}^2 \Bigr)\\
    &=o(1) \abs{u_k^-}_2^2 +
    \int_{\dR^N} \Bigl( \abs{\nabla
      u_k^-}^2 + [V-\lambda_k] \abs{u_k^-}^2 \Bigr)\\
    &= o(1)\norm{u_k^-}^2 + \int_{\dR^N} f(u_k^-)u^-_k\\
    &\le (\varepsilon+o(1))\norm{u_k^-}^2 + C \norm{u_k^-}^p
    + \varepsilon \norm{u_k^-}^{2^*}.
\end{align*}
By the choice of $\varepsilon$, this implies that $u_k^- = 0$ for
large $k$. Consequently, $u_k$ is strictly positive on $\dR^N$
for large $k$ by the strong maximum principle. The proof of
\th\ref{thm:two-bumps-general} is finished.

\section{Morse Index and nondegeneracy of normalized multibump
  solutions}
\label{sec:morse-index-nond}
In this section, we prove  a general result on the
nondegeneracy and the Morse index of normalized multibump
solutions built from fully nondegenerate critical points of the
restriction of $\Phi$ to $\Sigma_{\alpha/n}$. Moreover, we also
complete the proof of \th\ref{thm:two-bumps-local-min} at
the end of the section.
 
Recall, for $\alpha>0$ and a critical point $u$ of
$J_\alpha = \Phi |_{\Sigma_{\alpha}}$, the definitions of the Morse index
$m(u)$ and the free Morse index $m_\rmf(u)$ given in
\th\ref{def:various}.  The following theorem is
the main result of this section, and together with
\th\ref{lem:simple-but-imp} it readily implies
\th\ref{thm:two-bumps-morse-index}.
 
\begin{theorem}
  \th\label{thm:morse-index-general}
  Assume \ref{item:1}--\ref{item:3} and fix $\alpha>0$.  Given
  $n\in\dN$, $n\ge 2$, suppose that $\olu$ is a fully
  nondegenerate critical point of $J_{\alpha/n}$
  with Lagrange multiplier $\olambda$ and finite Morse index
  $m(\olu)$. Furthermore, let $(a_k)\subseteq(\dZ^N)^n$ be a
  sequence such that $d(a_k)\to\infty$ as $k\to\infty$, and
  such that the critical points $u_k$ of
  $J_\alpha$ with Lagrange multiplier $\lambda_k$
  and with
  \begin{equation}\label{eq:24-morse-index}
    \norm{u_k-v_k} \to 0 
    \quad\text{and}\quad\abs{\lambda_k-\olambda} \to 0\quad
    \text{as $k \to \infty$,}\qquad \text{where }  v_k := \sum_{i=1}^n \cT_{a^i_k} \olu  \in  H^1
  \end{equation}
  from \th\ref{thm:two-bumps-general} exist for all $k$.  Then,
  for $k$ sufficiently large, $u_k$ is a nondegenerate critical
  point of $J_\alpha$, $m(u_k) = n(m(\olu)+1)-1$ if
  $\lr(){\olu,z_\olu}_2<0$, and $m(u_k)=nm(\olu)$ if
  $\lr(){\olu,z_\olu}_2>0$.  If \ref{item:7} holds true, then
  $m(u_k)>0$ for large $k$.
\end{theorem}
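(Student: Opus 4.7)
The plan is to prove \th\ref{thm:morse-index-general} in four steps, leveraging the invertibility machinery built in the proof of \th\ref{thm:two-bumps-general} together with the free Morse index dictionary of \th\ref{lem:simple-but-imp}. Throughout, write $B := \rmD^2 \Phi(\olu) - \olambda S$ and $\widetilde B_k := \rmD^2 \Phi(u_k) - \lambda_k S$. Nondegeneracy of $u_k$ is immediate from the remark at the end of Section~\ref{sec:result-about-pert}: the uniform invertibility of $\rmD^2 G_\alpha(v_k,\olambda)$ established in Section~\ref{sec:gluing-bumps} transfers to $\rmD^2 G_\alpha(u_k,\lambda_k)$ by continuity and \eqref{eq:24-morse-index}. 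An adaptation of the Section~\ref{sec:gluing-bumps} contradiction argument, now applied to $\widetilde B_k$ itself rather than the bordered Hessian, then shows that $\widetilde B_k$ is an isomorphism of $H^1$ with $\norm{\widetilde B_k^{-1}}_{\cL(H^1)}$ uniformly bounded. Here one uses that free nondegeneracy of $\olu$ combined with compactness of $\rmD^2 \Psi(\olu)$ in $\cL(H^1)$ forces $\olambda \notin \sigma_\rmess(-\Delta+V)$, which under \ref{item:2} gives $\olambda \notin \sigma(-\Delta+V)$, so $I-\lambda_k S$ remains uniformly invertible for large $k$.

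The main technical step is to prove $m_\rmf(u_k) = n\, m_\rmf(\olu)$ for large $k$. For the lower bound, set $m := m_\rmf(\olu)$ and fix an $m$-dimensional subspace $Z \subset H^1$ with $\scp{B\cdot,\cdot}$ negative definite on $Z$. Define $Z_k := \sum_{i=1}^n \cT_{a_k^i} Z$; near-orthogonality of distant translates (a direct consequence of BL-splitting of $\norm{\cdot}^2$) implies $\dim Z_k = nm$ for large $k$, and the translation asymptotics \eqref{eq:50}--\eqref{eq:50-cor}, transported from $B_k$ to $\widetilde B_k$ by continuity, yield
\begin{equation*}
  \scp{\widetilde B_k w, w} = \sum_{i=1}^n \scp{B z^i, z^i} + o(1)\sum_{i=1}^n\norm{z^i}^2 \qquad\text{for }w = \sum_{i=1}^n \cT_{a_k^i} z^i \in Z_k,
\end{equation*}
which is negative definite on $Z_k$ for large $k$. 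The matching upper bound $m_\rmf(u_k) \le nm$ is the hard part and will be the main obstacle. I would argue it via IMS-style localization with a partition of unity $\chi_k^0,\chi_k^1,\dots,\chi_k^n$ satisfying $\sum_i (\chi_k^i)^2 \equiv 1$, with $\chi_k^i$ supported in a ball of radius $r_k \to \infty$ (chosen so that $r_k \ll d(a_k)$) around $a_k^i$ for $i \ge 1$, and $\chi_k^0$ supported outside. The IMS commutator identity, together with $u_k \approx v_k$ being small on $\supp \chi_k^0$, shows that on the $\chi_k^0$-piece the quadratic form is asymptotic to $\scp{(I-\lambda_k S)\cdot,\cdot}$, which is positive definite because finiteness of $m_\rmf(\olu)$ combined with Weyl's theorem forces $\olambda < \inf\sigma(-\Delta + V)$. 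On each $\chi_k^i$-piece with $i \ge 1$, conjugation by $\cT_{-a_k^i}$ reduces the form to $\scp{B\cdot,\cdot}+o(1)$, contributing at most $m$ negative directions. Summing over $i$ yields the $nm$ upper bound. The delicate part is controlling the IMS commutator errors uniformly in $k$ and securing positivity on the outer region.

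Next, I would pin down the sign of $(u_k, z_{u_k})_2$ by comparing $z_{u_k} = \widetilde B_k^{-1} S u_k$ to the ansatz $y_k := \sum_{i=1}^n \cT_{a_k^i} z_\olu$. The identity $B z_\olu = S\olu$ together with the translation equivariance of $S$ and the asymptotics \eqref{eq:50} applied to $\widetilde B_k$ give $\widetilde B_k y_k = S u_k + o(1)$ in $H^1$, and the uniform invertibility of $\widetilde B_k$ from Step one then yields $z_{u_k} = y_k + o(1)$. Pairing with $u_k = \sum_i \cT_{a_k^i} \olu + o(1)$ in $L^2$ and using $(\cT_{a_k^i}\olu, \cT_{a_k^j} z_\olu)_2 \to 0$ for $i \neq j$ (a consequence of weak convergence of translates to $0$), we conclude
\begin{equation*}
  (u_k, z_{u_k})_2 \;\longrightarrow\; n\,(\olu, z_\olu)_2 \;\neq\; 0,
\end{equation*}
so for large $k$, $u_k$ is fully nondegenerate with $\sgn(u_k, z_{u_k})_2 = \sgn(\olu, z_\olu)_2$.

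Finally, applying \th\ref{lem:simple-but-imp} to both $\olu$ and $u_k$ converts the free Morse index identity $m_\rmf(u_k) = n\, m_\rmf(\olu)$ into the restricted Morse index formula in the theorem: if $(\olu, z_\olu)_2 > 0$ then $m(\olu) = m_\rmf(\olu)$ and $m(u_k) = m_\rmf(u_k) = n\,m(\olu)$; if $(\olu, z_\olu)_2 < 0$ then $m_\rmf(\olu) = m(\olu)+1$ and $m(u_k) = m_\rmf(u_k) - 1 = n(m(\olu)+1) - 1$. Under the additional assumption \ref{item:7}, \th\ref{lem:h-5-consequence} yields $\scp{B \olu, \olu} < 0$, hence $m_\rmf(\olu) \ge 1$, and therefore $m(u_k) \ge n\, m_\rmf(\olu) - 1 \ge n - 1 \ge 1$ since $n \ge 2$.
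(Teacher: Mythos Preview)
Your proof is correct and reaches the same conclusion via \th\ref{lem:simple-but-imp}, but your route to the upper bound $m_\rmf(u_k)\le n\,m_\rmf(\olu)$ differs from the paper's. The paper does not use IMS localization; instead it works directly on the orthogonal complement $Z_k^\perp$ of the space $Z_k=\sum_i \cT_{a_k^i}Z$ (where $Z$ is the negative eigenspace of $B$): for $y_k\in Z_k^\perp$ with $\norm{y_k}=1$ it extracts weak limits $w^i=\wlim \cT_{-a_k^i}y_k\in Z^\perp$, sets $z_k=y_k-\sum_i\cT_{a_k^i}w^i$, and BL-splits $\scp{B_k y_k,y_k}=\scp{B z_k,z_k}+\sum_i\scp{B w^i,w^i}+o(1)\ge\delta$. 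This stays entirely within the BL-splitting machinery of Section~\ref{sec:gluing-bumps} and avoids cutoff bookkeeping. Your IMS approach is sound in principle---and your preliminary observation that finite $m_\rmf(\olu)$ forces $\olambda<\inf\sigma(-\Delta+V)$ is correct and necessary for it---but you would still have to make the commutator errors and the outer-region positivity quantitative enough to beat them uniformly by the spectral gap of $B$, which is exactly the work the paper's argument sidesteps. Your Step~3 is also organized differently: you prove the \emph{strong} convergence $z_{u_k}-\sum_i\cT_{a_k^i}z_{\olu}\to 0$ using the uniform invertibility of $\widetilde B_k$, whereas the paper establishes only the weak limits $\cT_{-a_k^i}z_{u_k}\weakto z_{\olu}$ via a duality computation; both suffice for $(u_k,z_{u_k})_2\to n(\olu,z_{\olu})_2$, and your version is arguably cleaner once the uniform invertibility is in hand.
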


To prove this Theorem, we set $B:= \rmD^2 \Phi(\olu)-\olambda S$
and $B_k \coloneqq \rmD^2\Phi(v_k)-\olambda S$, as in
Section~\ref{sec:gluing-bumps}.  Moreover, we consider the self
adjoint operators
\begin{equation*}
  C_k\coloneqq \rmD^2\Phi(u_k)-\lambda_k
  S  \in\cL(H^1)
\end{equation*}
for $k\in\dN$.  First we show that the constrained critical
points $u_k$ of $\Phi$ are freely nondegenerate and that
\begin{equation*}
  m_\rmf(u_k)=nm_\rmf(\olu)\qquad\text{for large } k.
\end{equation*}
To this end it is sufficient to prove the following

\begin{lemma}
  \th\label{lem:free-morse-index-multi} It holds true that
  \begin{align}
    \limsup_{k\to\infty}\adjustlimits\inf_{\substack{W\leqslant H^1\\\dim W =n
        m_\rmf(\olu)\:}}
    \sup_{\substack{w\in W\\\norm{w}=1}}
    \scp{C_kw,w}
    &<0\\
    \shortintertext{and}
    \liminf_{k\to\infty}\adjustlimits\inf_{\substack{W\leqslant
        H^1\\\dim W = nm_\rmf(\olu)+1\:}}
    \sup_{\substack{w\in W\\\norm{w}=1}}
    \scp{C_kw,w}
    &>0.
  \end{align}
\end{lemma}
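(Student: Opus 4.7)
The plan is to first reduce to the operators $B_k := \rmD^2\Phi(v_k) - \olambda S$: \th\ref{lem:psi-bl-splits} combined with $\norm{u_k - v_k} \to 0$ and $\lambda_k \to \olambda$ implies $\norm{C_k - B_k}_{\cL(H^1)} \to 0$, so it suffices to prove both inequalities with $C_k$ replaced by $B_k$. Since $B := \rmD^2 \Phi(\olu) - \olambda S$ is a self-adjoint isomorphism of $H^1$ of finite Morse index $m_\rmf(\olu)$ and $\rmD^2\Psi(\olu)$ is compact, Weyl's theorem together with the finite Morse index forces $\sigma_\rmess(B) = \sigma_\rmess(I - \olambda S) \subset [2c,\infty)$ for some $c>0$; indeed, any negative essential spectrum would produce an infinite-dimensional negative spectral subspace. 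Consequently $H^1$ decomposes $B$-invariantly and $\scp{\cdot,\cdot}$-orthogonally as $Z^- \oplus Z^+$ with $\dim Z^- = m_\rmf(\olu)$, $\scp{Bw,w}\le -c\norm{w}^2$ on $Z^-$, $\scp{Bw,w}\ge c\norm{w}^2$ on $Z^+$; moreover, under \ref{item:2} the operator $I - \olambda S$ has purely essential spectrum, so $I - \olambda S \ge 2c$ on all of $H^1$.

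For the first inequality, set $W_k := \cT_{a_k^1} Z^- + \cdots + \cT_{a_k^n} Z^-$ and fix a basis $\{e_1,\dots,e_{m_\rmf(\olu)}\}$ of $Z^-$. Using the strong and weak convergence statements~\eqref{eq:50},~\eqref{eq:50-cor} of Section~\ref{sec:gluing-bumps}, together with $\scp{\cT_{a_k^i} e_\ell,\cT_{a_k^j}e_{\ell'}}\to 0$ for $i\neq j$ (weak decay of translated localized functions, using that each $e_\ell$ decays at infinity), one checks that the Gram matrix and the matrix of $\scp{B_k\cdot,\cdot}$ in the basis $\{\cT_{a_k^i}e_\ell\}$ of $W_k$ both become block-diagonal up to $o(1)$, with diagonal blocks converging to the Gram matrix of $\{e_\ell\}$ and to the matrix of $\scp{B\cdot,\cdot}|_{Z^-}$ respectively. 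Hence for large $k$ one has $\dim W_k = nm_\rmf(\olu)$ and $\scp{B_kw,w}\le -\tfrac{c}{2}\norm{w}^2$ on $W_k$, giving the first inequality.

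For the second inequality I would invoke the dual Courant--Fischer principle: it is enough to show that, for large $k$, the form $\scp{B_k\cdot,\cdot}$ admits a uniform positive lower bound on the codimension-$nm_\rmf(\olu)$ subspace $V_k := (\cT_{a_k^1} Z^-\oplus\cdots\oplus\cT_{a_k^n} Z^-)^\perp$. I would apply an IMS localization with a smooth partition $\sum_{i=0}^n(\chi_k^i)^2=1$ such that $\chi_k^i$ is supported in $B(a_k^i,R_k/3)$ for $i\ge 1$, $\chi_k^0$ vanishes on $\bigcup_{i=1}^n B(a_k^i,R_k/4)$, and $\norm{\nabla\chi_k^i}_\infty=O(1/R_k)$, with $R_k:=d(a_k)/3\to\infty$. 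Using the identity $\sum_i\chi_k^i\nabla\chi_k^i=0$, this yields
\begin{equation*}
  \scp{B_k w,w}=\sum_{i=0}^n\scp{B_k(\chi_k^iw),\chi_k^iw}+O(R_k^{-2})\norm{w}^2.
\end{equation*}
For $i\ge 1$: on $\supp\chi_k^i$ the function $v_k$ is exponentially close to $\cT_{a_k^i}\olu$, so $\scp{B_k(\chi_k^iw),\chi_k^iw}=\scp{B(\cT_{-a_k^i}(\chi_k^iw)),\cT_{-a_k^i}(\chi_k^iw)}+o(1)\norm{w}^2$. The orthogonality $w\perp\cT_{a_k^i}Z^-$ together with the commutator estimate $\scp{\chi_k^iw,\cT_{a_k^i}z}=\scp{w,\chi_k^i\cT_{a_k^i}z}+o(1)\norm{w}\norm{z}$ and the convergence $\chi_k^i\cT_{a_k^i}z\to\cT_{a_k^i}z$ in $H^1$ for any fixed $z\in Z^-$ (by pointwise decay of $z$) show that $\cT_{-a_k^i}(\chi_k^iw)$ is $o(1)$-close in $\scp{\cdot,\cdot}$ to $Z^+$; the spectral lower bound on $Z^+$ then gives $\scp{B_k(\chi_k^iw),\chi_k^iw}\ge c\norm{\chi_k^iw}^2-o(1)\norm{w}^2$. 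For $i=0$: on $\supp\chi_k^0$, $v_k$ is pointwise $o(1)$ uniformly, so by \ref{item:3} the potential $f'(v_k)$ tends to $0$ uniformly, giving $\scp{B_k(\chi_k^0w),\chi_k^0w}=\scp{(I-\olambda S)(\chi_k^0w),\chi_k^0w}+o(1)\norm{w}^2\ge 2c\norm{\chi_k^0w}^2-o(1)\norm{w}^2$. Summing and using $\sum_i\norm{\chi_k^iw}^2=\norm{w}^2+o(1)\norm{w}^2$ delivers a uniform positive lower bound.

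The main obstacle will be making the IMS decomposition effectively localize the Hessian: one must simultaneously control the commutator error, the pointwise decay of $v_k$ away from the bumps (to reduce the $i=0$ piece to the form of $I-\olambda S$), and the preservation under cutoff of the defining orthogonality $w\perp\cT_{a_k^i}Z^-$, which is what activates the spectral lower bound on $Z^+$ in each inside piece and thereby transfers the finite Morse index information of $B$ simultaneously to all $n$ bumps.
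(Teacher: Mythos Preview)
Your reduction from $C_k$ to $B_k$ and your treatment of the first inequality coincide with the paper's proof: both take $W_k=\sum_i\cT_{a_k^i}Z^-$ and use \eqref{eq:50}, \eqref{eq:50-cor} together with the finite-dimensionality of $Z^-$ to show that the form is uniformly negative on $W_k$.

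For the second inequality your route is genuinely different. The paper argues by a weak-limit profile decomposition: given $y_k\in W_k^\perp$ with $\norm{y_k}=1$, it extracts the weak limits $w^i:=\wlim_{k\to\infty}\cT_{-a_k^i}y_k$, checks $w^i\in Z^+$ directly from the orthogonality, sets $z_k:=y_k-\sum_i\cT_{a_k^i}w^i$, and then uses the BL-splitting of $\rmD^2\Psi$ together with the compactness of $\rmD^2\Psi(\olu)$ to obtain
\[
\scp{B_ky_k,y_k}=\scp{Bz_k,z_k}+\sum_{i=1}^n\scp{Bw^i,w^i}+o(1);
\]
since $P_{Z^-}z_k\to 0$ by weak convergence and finite rank, every piece on the right is bounded below by $\delta$ times its squared norm, and these norms recombine to $\norm{y_k}^2+o(1)$. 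Your argument instead localizes in physical space via an IMS partition: the inside pieces are handled by showing that the cutoff approximately preserves the constraint $w\perp\cT_{a_k^i}Z^-$, so each localized piece lies nearly in $\cT_{a_k^i}Z^+$; the outside piece is controlled by the bound $I-\olambda S\ge 2c$. You derive this last bound correctly from the standing hypotheses: under \ref{item:2} the operator $I-\olambda S$ has purely essential spectrum, which by Weyl's theorem equals $\sigma_\rmess(B)$, and the invertibility of $B$ together with $m_\rmf(\olu)<\infty$ forces $\sigma_\rmess(B)\subset(0,\infty)$. This incidentally yields $\olambda<\inf\sigma(-\Delta+V)$ without the positivity assumption needed in \th\ref{lem:positive-spectrum}.

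Both arguments are valid. The paper's approach is softer: it stays entirely within the abstract BL-splitting framework of \th\ref{lem:psi-bl-splits} and uses only weak convergence, with no appeal to pointwise decay of $\olu$ or to the sign of $\olambda-\inf\sigma(-\Delta+V)$. Your IMS approach is more geometric and makes the spatial mechanism explicit; it requires the pointwise decay $\olu(x)\to 0$ (available from elliptic regularity) and the coercivity of $I-\olambda S$, but in return produces a direct quantitative lower bound rather than a subsequence contradiction. One cosmetic point: ``exponentially close'' in your inside-piece estimate is stronger than needed and stronger than what you have established at that stage; uniform smallness of $\sum_{j\neq i}\cT_{a_k^j}\olu$ on $\supp\chi_k^i$, which follows from $\olu(x)\to 0$, is all that is used.
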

\begin{proof}
By \eqref{eq:24-morse-index} and since
$\rmD^2 \Phi\colon H^1 \to \cL(H^1)$ is uniformly continuous on
bounded subsets of $H^1$, the assertion follows once we have established the following estimates:
\begin{align}
  \limsup_{k\to\infty}\adjustlimits\inf_{\substack{W\leqslant H^1\\\dim W =n
      m_\rmf(\olu)\:}}
  \sup_{\substack{w\in W\\\norm{w}=1}}
  \scp{B_kw,w}
  &<0, \label{eq:13}\\
  \liminf_{k\to\infty}\adjustlimits\inf_{\substack{W\leqslant
      H^1\\\dim W = nm_\rmf(\olu)+1\:}}
  \sup_{\substack{w\in W\\\norm{w}=1}}
  \scp{B_kw,w}
  &>0.\label{eq:14}
\end{align}
Let $Z \subset H^1$ denote the generalized eigenspace of the
  self-adjoint operator $B$ in $H^1$ corresponding to its
  $m_\rmf(\olu)$ negative eigenvalues.  Pick $\delta>0$ such that
  $\scp{B w,w} \le -\delta \norm{w}^2$ for all $w \in Z$ and
  $\scp{B y,y} \ge \delta \norm{y}^2$ for all $y \in Z^\perp$.
  Put
  \begin{equation*}
    Z_k\coloneqq \sum_{i=1}^n  \cT_{a^i_k} Z \subset H^1 \qquad \text{for $k \in \dN$.}
  \end{equation*}
  Since $d(a_k)\to\infty$, the sum is direct and hence
  $\dim Z_k = nm_\rmf(\olu)$ for $k$ sufficiently large.  If
  $w_k\in Z_k$ satisfies $\norm{w_k}=1$ for all $k$, then it
  suffices to show
  \begin{equation}
    \limsup_{k\to\infty} \scp{B_k w_k,w_k} \le
    -\delta\label{eq:12}
  \end{equation}
  along a subsequence to prove \eqref{eq:13}.  We write
  \begin{equation*}
    w_k = \sum_{i=1}^n \cT_{a^i_k} \rho_k^i    \qquad \text{for $k \in \dN$ with $\rho_k^i \in Z$.} 
  \end{equation*}
  Since $Z$ is finite dimensional, we may pass to a subsequence
  such that $\rho_k^i \to \rho^i \in Z$ for $i=1,\dots, n$ as
  $k \to \infty$.  It is easy to see that then
  \begin{equation*}
    1= \norm{w_k}^2 = \sum_{i=1}^n \norm{\rho^i}^2 + o(1)\qquad \text{as
      $k \to \infty$.}
  \end{equation*}
  Thus \eqref{eq:50} and \eqref{eq:50-cor} imply that
  \begin{align*}
    \scp{B_k w_k,w_k} = \sum_{i,j=1}^n \scp{ B_k \cT_{a^i_k}
      \rho_k^i, \cT_{a^j_k} \rho_k^j }&= \sum_{i,j=1}^n
    \scp{\cT_{-a^j_k} B_k \cT_{a^i_k} \rho^i, \rho^j }+o(1)
    = \sum_{i=1}^n \scp{B \rho^i,  \rho^i} +o(1)\\
    & \le -\delta \sum_{i=1}^n \norm{\rho^i}^2 + o(1) = -\delta +
    o(1),
  \end{align*}
  that is, \eqref{eq:12}.

  If $y_k\in Z_k^\perp$ satisfies $\norm{y_k}=1$ for all $k$,
  then it suffices to show
  \begin{equation}
    \liminf_{k\to\infty} \scp{B_k y_k,y_k} \ge
    \delta\label{eq:15}
  \end{equation}
  for a subsequence to prove \eqref{eq:14}.  Passing to a
  subsequence, we may assume that
  \begin{equation*}
    w^i\coloneqq\wlim_{k\to\infty}\cT_{-a_k^i} y_k 
  \end{equation*}
  exists for $i=1,\dots,n$. Let $v \in Z$. Since
  $\cT_{a_k^i} v \in Z_k$, we infer that
  \begin{equation*}
    0
    = \scp{\cT_{a_k^i} v, y_k}
    = \scp{v, \cT_{-a_k^i} y_k}
    = \scp{v,w^i} +o(1)
    \qquad \text{for $i=1,\dots,n$.}
  \end{equation*}
  Consequently,
  \begin{equation}
    \label{eq:claim-w-sup-i}
    w^i \in  Z^\perp  \qquad \text{for $i = 1,\dots,n$.} 
  \end{equation}
  We now set
  \begin{equation*}
    z_k\coloneqq y_k-\sum_{i=1}^n \cT_{a_k^i} w^i \qquad \text{for $k \in \dN$,}
  \end{equation*}
  noting that
  \begin{equation}
    \label{eq:zero-weak-lim-z-k-1}
    \wlim_{k\to\infty}\cT_{-a_k^i} z_k=0 \qquad \text{for $i=1,\dots,n$.}
  \end{equation}
  In particular, this implies that
  \begin{equation}
    \label{eq:zero-weak-lim-z-k-2}
    z_k\weakto 0 \qquad \text{in $H^1$}
  \end{equation}
  by \eqref{eq:51} which we may again assume without loss of
  generality.  Using \eqref{eq:50}, \eqref{eq:50-cor}, and
  \eqref{eq:zero-weak-lim-z-k-1} we obtain the splitting
  \begin{equation}
    \begin{aligned}
      \scp{B_k y_k,y_k}
      &=     \scp{B_k z_k, z_k}+2 \sum_{i=1}^n \scp{ B_k \cT_{a_k^i} w^i, z_k}+ \sum_{i,j=1}^n \scp{ B_k \cT_{a_k^i} w^i, \cT_{a_k^j} w^j}\\
      &=     \scp{B_k z_k, z_k}+2 \sum_{i=1}^n \scp{\cT_{-a_k^i} B_k \cT_{a_k^i} w^i, \cT_{-a_k^i} z_k}+ \sum_{i,j=1}^n \scp{\cT_{-a_k^j} B_k \cT_{a_k^i} w^i, w^j} \\
      &= \scp{B_k z_k, z_k}+ \sum_{i=1}^n \scp{B w^i,w^i}+o(1),
    \end{aligned}
    \label{eq:morse-splitting-1}
  \end{equation}
  where
  \begin{equation}
    \begin{aligned}
      \scp{B_k z_k, z_k}
      & = \norm{z_k}^2- \lambda \abs{z_k}_2^2 - \scp{\rmD^2 \Psi(v_k) z_k,z_k}\\
      & = \norm{z_k}^2- \lambda \abs{z_k}_2^2 - \sum_{i=1}^n \scp{\rmD^2 \Psi(\cT_{a^i_k} \olu) z_k,z_k} +o(1) \\
      & = \norm{z_k}^2- \lambda \abs{z_k}_2^2
      - \sum_{i=1}^n \scp{\rmD^2 \Psi(\olu) \cT_{-a^i_k} z_k, \cT_{-a^i_k} z_k}+o(1) \\
      & = \norm{z_k}^2- \lambda \abs{z_k}_2^2 +o(1)\\
      & = \norm{z_k}^2- \lambda \abs{z_k}_2^2 - \scp{\rmD^2 \Psi(\olu) z_k, z_k} + o(1)  \\
      &= \scp{B z_k, z_k}+o(1).
    \end{aligned}
    \label{eq:morse-splitting-2}
  \end{equation}
  Here we have used \eqref{eq:61}, \eqref{eq:17}, \eqref{eq:zero-weak-lim-z-k-1},
  \eqref{eq:zero-weak-lim-z-k-2}, and the compactness of the
  operator $\rmD^2 \Psi(\olu) \in \cL(H^1)$.

  Let $P \in \cL(\cH^1)$ denote the
  $\scp{\cdot,\cdot}$-orthogonal projection on $Z$, and let
  $Q:= I-P$. Since $P$ has finite range, we see that
  \begin{equation}
    \label{eq:z_k-lambda-proj}
    z_k - Q z_k = P z_k  \to 0 \qquad \text{in $H^1$  as $k \to \infty$.}  
  \end{equation}
  Combining \eqref{eq:claim-w-sup-i},
  \eqref{eq:morse-splitting-1}, \eqref{eq:morse-splitting-2}, and
  \eqref{eq:z_k-lambda-proj}, we obtain
  \begin{align*}
    \scp{B_k y_k,y_k} &= \scp{B Q z_k, Q z_k} +  \sum_{i=1}^n \scp{B w^i,w^i}+o(1) \ge \delta \Bigl( \norm{Q z_k}^2 + \sum_{i=1}^n \norm{w^i}^2\Bigr)+o(1)\\
    &= \delta \Bigl( \norm{z_k}^2 + \sum_{i=1}^n
    \norm{w^i}^2\Bigr)+o(1)= \delta \norm{y_k}^2 +o(1)= \delta +
    o(1),
  \end{align*}
  and hence \eqref{eq:15}.
\end{proof}

From \th\ref{lem:free-morse-index-multi} it follows that $C_k$ is
invertible for large $k$ and that the norm of its inverse remains
bounded as $k\to\infty$.  We now recall the function
$z_{u_k}= C_k^{-1} S u_k \in H^1$, which by
\th\ref{lem:simple-but-imp} is of key importance to compute
$m(u_k)$.

\begin{lemma}
  \label{z-u-k-weak-lim}
  For $i=1,\dots,n$ we have that
  \begin{equation*}
    \cT_{-a^i_k}z_{u_k} \weakto z_{\olu} =B^{-1}S \olu \quad \text{in $H^1$ as $k \to \infty$.}
  \end{equation*}
\end{lemma}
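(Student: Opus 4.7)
The plan is to identify the weak limit $w^i := \wlim_k \cT_{-a^i_k} z_{u_k}$ along subsequences by testing the defining equation $C_k z_{u_k} = S u_k$ against arbitrary $\phi \in H^1$. First I would note that $(z_{u_k})_k$ is bounded in $H^1$: by \th\ref{lem:free-morse-index-multi} and the remark following it, $C_k$ is invertible for large $k$ with $\sup_k \norm{C_k^{-1}}_{\cL(H^1)} < \infty$, and $(Su_k)_k$ is bounded. Passing to a subsequence, I may therefore assume $\cT_{-a^i_k} z_{u_k} \weakto w^i$ in $H^1$ for each $i$; showing $w^i = z_\olu$ (and hence that the limit is subsequence-independent) will yield the claim for the full sequence.

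The next step is to establish the analog of \eqref{eq:50} with $C_k$ in place of $B_k$, namely $\cT_{-a^i_k} C_k \cT_{a^i_k} \phi \to B\phi$ in $H^1$ for every $\phi \in H^1$. Since $u_k - v_k \to 0$ in $H^1$ and $\lambda_k \to \olambda$ by \th\ref{thm:two-bumps-general}, and since $\rmD^2\Psi$ is uniformly continuous on bounded subsets of $H^1$ by \th\ref{lem:psi-bl-splits}, I would deduce $C_k - B_k \to 0$ in $\cL(H^1)$, so that \eqref{eq:50} transfers. In parallel, I would show $\cT_{-a^i_k} S u_k \weakto S\olu$: using that $S$ commutes with translations by \ref{item:2} and is weakly continuous on $H^1$, it suffices to check $\cT_{-a^i_k} u_k \weakto \olu$, which follows from $\cT_{-a^i_k}(u_k - v_k) \to 0$ strongly together with the identity $\cT_{-a^i_k} v_k = \olu + \sum_{j \ne i} \cT_{a^j_k - a^i_k} \olu$, where each summand with $j \ne i$ tends to zero weakly since $d(a_k) \to \infty$.

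To conclude, for any $\phi \in H^1$ I would exploit the self-adjointness of $C_k$ to write
\begin{equation*}
\scp{\cT_{-a^i_k} Su_k, \phi} = \scp{S u_k, \cT_{a^i_k} \phi} = \scp{C_k z_{u_k}, \cT_{a^i_k} \phi} = \scp{\cT_{-a^i_k} z_{u_k},\; \cT_{-a^i_k} C_k \cT_{a^i_k} \phi}.
\end{equation*}
The left-hand side tends to $\scp{S\olu,\phi}$ by the previous step, and the right-hand side pairs a weakly convergent sequence against a strongly convergent one, so its limit equals $\scp{w^i,B\phi}=\scp{Bw^i,\phi}$. Hence $Bw^i = S\olu$, and the invertibility of $B$ from \eqref{eq:def-B} forces $w^i = B^{-1}S\olu = z_\olu$. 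The main subtlety is the transfer of \eqref{eq:50} from $B_k$ to $C_k$: it relies on combining the $H^1$-convergence $u_k - v_k \to 0$ and $\lambda_k \to \olambda$ from \th\ref{thm:two-bumps-general} with the uniform continuity of $\rmD^2\Psi$ on bounded sets, but no further estimate beyond what is already available.
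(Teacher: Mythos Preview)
Your proposal is correct and follows essentially the same approach as the paper: both proofs test the defining relation $C_k z_{u_k}=Su_k$ against translated test functions, use self-adjointness of $C_k$, and rely on the strong convergence $\cT_{-a^i_k} C_k \cT_{a^i_k}\phi\to B\phi$ (transferred from \eqref{eq:50} via $C_k-B_k\to 0$ in $\cL(H^1)$). The only cosmetic difference is that the paper tests directly against $\varphi=B^{-1}\psi$ for arbitrary $\psi$, thereby obtaining $\scp{\cT_{-a^i_k}z_{u_k},\psi}\to\scp{z_\olu,\psi}$ for the full sequence without passing to subsequences, whereas you first extract a weak limit $w^i$ along subsequences and then identify it via $Bw^i=S\olu$.
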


\begin{proof}
  Let $\psi \in H^1$, and let $\varphi = B^{-1} \psi \in
  H^1$.  Recalling that $\rmD^2 \Phi\colon H^1 \to \cL(H^1)$ is
  uniformly continuous on bounded subsets of $H^1$, we may deduce
  from (\ref{eq:50}) that
\begin{equation*}
  \cT_{-a^i_k} C_k \cT_{a^i_k} \varphi
  = \cT_{-a^i_k} B_k \cT_{a^i_k} \varphi + o(1) \to B \varphi
  = \psi \qquad \text{in $H^1$}
\end{equation*}
as $k \to \infty$. Since moreover the sequence $(z_{u_k})_k$ is bounded in $H^1$ and 
$\cT_{-a^i_k} u_k \weakto \olu$ in $H^1$ as $k \to \infty$, we have that 
\begin{align*}
  \scp{ z_{\olu}, \psi }
  &=\scp{ B^{-1} (S \olu),  \psi }
  = \scp{ S \olu,  \varphi }
  = \scp{ S (\cT_{-a^i_k} u_k),  \varphi } + o(1)
  = \scp{ S u_k, \cT_{a^i_k} \varphi } + o(1)\\
  &= \scp{ C_k z_{u_k}, \cT_{a^i_k} \varphi } + o(1)
  = \scp{ z_{u_k}, C_k \cT_{a^i_k} \varphi } + o(1)
  = \scp{ \cT_{-a^i_k}z_{u_k}, \cT_{-a^i_k} C_k \cT_{a^i_k} \varphi } + o(1)\\
  &=\scp{ \cT_{-a^i_k}z_{u_k}, \psi }+o(1)
  \qquad \text{as $k \to \infty$.}
\end{align*}
\end{proof}

We may now complete the 

\begin{proof}[Proof of \th\ref{thm:morse-index-general}]
With the help of Lemma~\ref{z-u-k-weak-lim}, we compute
\begin{align*}
  \lr(){u_k,z_{u_k}}_2=\lr(){v_k,z_{u_k}}_2+o(1)= \sum_{i=1}^n \lr(){\cT_{a^i_k} \olu, z_{u_k}}_2+o(1)&= \sum_{i=1}^n \lr(){\olu, \cT_{-a^i_k} z_{u_k}}_2+o(1)\\
&=n\lr(){\olu,z_\olu}_2+o(1).
\end{align*}
Since $\lr(){\olu,z_\olu}_2 \not= 0$ as $\olu$ is fully nondegenerate by assumption, we infer that $\lr(){u_k,z_{u_k}}_2$ is also nonzero and has the same sign as $\lr(){\olu,z_\olu}_2$ for large $k$. Moreover, $u_k$ is freely
nondegenerate by
\th\ref{lem:free-morse-index-multi}, so \th\ref{lem:simple-but-imp} yields that $u_k$ is a fully nondegenerate critical point of $\Phi\big|_{\Sigma_\alpha}$ for large $k$.  Its Morse index is, by the same
token, $m(u_k) =m_\rmf(u_k)-1 =nm_\rmf(\olu)-1 =n(m(\olu)+1)-1$
if $\lr(){\olu,z_\olu}_2<0$, and it is
$m(u_k) =m_\rmf(u_k) =nm_\rmf(\olu) =nm(\olu)$ if
$\lr(){\olu,z_\olu}_2>0$.

To show the last statement of the present theorem, suppose that
\ref{item:7} is satisfied.  \th\ref{lem:h-5-consequence} implies
that $\scp{B\olu,\olu}<0$, that is, $m_\rmf(\olu)>0$.  In any
case it follows from the preceding calculations that $m(u_k)>0$
for large $k$.  This completes the proof of
\th\ref{thm:morse-index-general}.
\end{proof}

We close this section by completing the 

\begin{proof}[Proof of \th\ref{thm:two-bumps-local-min}]
  Let $\olu$ be a nondegenerate local minimum of
  $J_{\alpha/n}$ with Lagrange multiplier
  $\olambda$. Moreover, let $(a_k)\subseteq(\dZ^N)^n$ be a
  sequence such that $d(a_k)\to\infty$ as $k\to\infty$. By
  \th\ref{lem:simple-but-imp-1}, $\olu$ is fully
  nondegenerate and, without loss of generality, a positive
  function. Thus, \ref{item:7} and \th\ref{thm:two-bumps-general}
  imply the existence of positive critical points $u_k$ of
  $J_\alpha$ with Lagrange multiplier $\lambda_k$
  for large $k$ and such that \eqref{eq:24-morse-index} holds
  true. Moreover, the sequence $(u_k)_k$ is uniquely determined
  by these properties.  Since moreover $m_\rmf(\olu)>0=m(\olu)$ by
  \ref{item:7} and \th\ref{lem:h-5-consequence},
  \th\ref{thm:morse-index-general} now implies that $u_k$ is
  nondegenerate with $m(u_k) = n-1$ for large $k$.
\end{proof}

\section{Proof of \th\ref{teo:semiclassical-existence}}
\label{sec:example}

In this section we wish to prove
\th\ref{teo:semiclassical-existence}. For this we will assume
hypotheses \ref{item:4} and \ref{item:5}.  Without loss of
generality we may also assume for the nondegenerate critical
point $x_0$ of $V$ that
\begin{equation*}
  x_0 =0\qquad \text{and}\qquad V(x_0)=1.  
\end{equation*}
We are then concerned with positive solutions of the singularly
perturbed equation
\begin{equation}
  \label{eq:grossi-1}
  -\varepsilon^2 \Delta u+V(x)u = \abs{u}^{p-2}u, \qquad         u\in H^1,
\end{equation}
where $p \in (2,2^*)$. By \cite[Theorem 1.1]{MR1956951}, there
exists $\varepsilon_0$ and a family of positive single peak
solutions $\olu_\eps$, $\varepsilon \in (0,\varepsilon_0)$, of
\eqref{eq:grossi-1} which concentrates at $x_0=0$. This means
that each $\olu_\eps$ has only one local maximum, and the
rescaled functions
\begin{equation}
  \label{def-u-eps}
  u_\eps \in H^1, \qquad u_\eps(x) \coloneqq \olu_\varepsilon(\eps x)
\end{equation}
converge, as $\eps \to 0$, in $H^1$ to the unique radial positive solution
of the limit equation
\begin{equation}
  \label{eq:limit-equation}
  -\Delta u_0 + u_0 = u_0^{p-1} \qquad \text{in $\dR^N$.}
\end{equation}
Moreover, as follows from the uniqueness statement in
\cite[Theorem 1.1]{MR1956951}, this convergence property after
rescaling determines the solutions $\olu_\eps$ uniquely for
$\eps>0$ small. In addition, we can assume by \cite[Theorem
6.2]{MR1956951} that $\olu_\eps$ is nondegenerate, i.e., the linear operator
\begin{equation}
  \label{eq:nondegeneracy-grossi-olu}
 H^1 \mapsto H^{1},\qquad v \mapsto v - (p-1)(-\eps^2 \Delta + V)^{-1}\olu_\eps^{p-2}v \qquad \text{is an isomorphism}
\end{equation}
for $\eps\in(0,\varepsilon_0)$. Here, for $\eps >0$, the operator $-\eps^2\Delta + V \in \cL(H^{1},H^{-1})$ is understood as the Hilbert space isomorphism $H^1\to H^{-1}$ associated with the scalar product 
$$
(u,v) \mapsto  \int \limits_{\dR^N}(\eps^2 \nabla u\cdot\nabla v+V uv)
$$ 
 on $H^1$ via Riesz's representation theorem. Since $0<\min V\le\max V<\infty$, this scalar product is equivalent to the 
standard scalar product on $H^1$, which we denote by 
\begin{equation}
\label{standard-scalar-product}
  \scp{u,v}_{H^1}\coloneqq\int_{\dR^N}(\nabla u\cdot\nabla v+uv)
\end{equation}
 We also let $\norm{\cdot}_{H^1}$ denote 
the associated norm.

\begin{lemma}
  \th\label{lem:continuous-dependance} The map
  $(0,\varepsilon_0)\to H^1$, $\varepsilon\mapsto \olu_\varepsilon$
  is continuous.
\end{lemma}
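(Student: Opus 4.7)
The plan is to apply the implicit function theorem to the $C^1$ map
$$
F\colon (0,\infty)\times H^1\to H^{-1},\qquad F(\varepsilon,u)\coloneqq(-\varepsilon^2\Delta+V)u-\abs{u}^{p-2}u,
$$
which is $C^1$ because the subcritical growth $p\in(2,2^*)$ renders $u\mapsto\abs{u}^{p-2}u$ a $C^1$ superposition operator from $H^1$ to $H^{-1}$ by Sobolev embedding. For any fixed $\varepsilon^*\in(0,\varepsilon_0)$ I would factor the partial derivative $D_uF(\varepsilon^*,\olu_{\varepsilon^*})$ as the composition of the Hilbert space isomorphism $-\varepsilon^{*2}\Delta+V\colon H^1\to H^{-1}$ with the $H^1$-isomorphism appearing in \eqref{eq:nondegeneracy-grossi-olu}, deducing that $D_uF(\varepsilon^*,\olu_{\varepsilon^*})$ is an isomorphism $H^1\to H^{-1}$. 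The IFT then produces $\delta>0$, a radius $\rho>0$, and a $C^1$ map $g\colon(\varepsilon^*-\delta,\varepsilon^*+\delta)\to H^1$ with $g(\varepsilon^*)=\olu_{\varepsilon^*}$, $F(\varepsilon,g(\varepsilon))=0$, and $g(\varepsilon)$ characterized as the unique zero of $F(\varepsilon,\cdot)$ in $B_\rho(\olu_{\varepsilon^*})$.

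What remains is to identify $g(\varepsilon)$ with $\olu_\varepsilon$ for $\varepsilon$ near $\varepsilon^*$. To this end I would run a connectedness argument on
$$
\cC\coloneqq\{\varepsilon^*\in(0,\varepsilon_0)\mid\varepsilon\mapsto\olu_\varepsilon\text{ is continuous at }\varepsilon^*\}.
$$
Openness of $\cC$ follows from the IFT: if $\varepsilon^*\in\cC$, then $\olu_\varepsilon\in B_\rho(\olu_{\varepsilon^*})$ for $\varepsilon$ near $\varepsilon^*$, so local uniqueness forces $\olu_\varepsilon=g(\varepsilon)$, a $C^1$ function of $\varepsilon$, whence $\cC$ contains a full neighborhood of $\varepsilon^*$. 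Nonemptiness is provided by a small right neighborhood of $0$: the strong $H^1$-convergence of the rescaled functions $u_\varepsilon(x)=\olu_\varepsilon(\varepsilon x)$ to $u_0$ together with the uniqueness statement from \cite{MR1956951} (after possibly shrinking $\varepsilon_0$) yields continuity for small $\varepsilon$, which combined with IFT puts an interval $(0,\varepsilon_1)$ into $\cC$.

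The main obstacle is closedness of $\cC$ in $(0,\varepsilon_0)$. Given $\varepsilon_n\in\cC$ with $\varepsilon_n\to\varepsilon^{**}\in(0,\varepsilon_0)$, one needs to show $\olu_{\varepsilon_n}\to\olu_{\varepsilon^{**}}$ in $H^1$. I would combine: first, uniform $L^\infty$ and exponential decay estimates on the single-peak family over a compact $\varepsilon$-range (standard consequences of elliptic regularity and subcritical growth), yielding a uniform $H^1$-bound and the absence of mass escape to infinity; second, weak $H^1$-compactness to extract a subsequential weak limit $u^*$, upgraded to strong $H^1$-convergence via the uniform decay; third, passing to the limit in $F(\varepsilon_n,\olu_{\varepsilon_n})=0$ to conclude $F(\varepsilon^{**},u^*)=0$; and finally invoking local IFT uniqueness in $B_\rho(\olu_{\varepsilon^{**}})$ to identify $u^*=\olu_{\varepsilon^{**}}$, since the strong convergence places $u^*$ inside this ball. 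The usual subsequence principle then gives convergence of the full sequence, establishing $\varepsilon^{**}\in\cC$. As $\cC$ is open, closed, and nonempty in the connected set $(0,\varepsilon_0)$, we conclude $\cC=(0,\varepsilon_0)$, proving the lemma.
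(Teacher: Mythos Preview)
Your IFT setup matches the paper's exactly: define $F(\varepsilon,u)=(-\varepsilon^2\Delta+V)u-\abs{u}^{p-2}u$ from $(0,\infty)\times H^1$ to $H^{-1}$, check that $D_uF(\varepsilon^*,\olu_{\varepsilon^*})$ is an isomorphism via \eqref{eq:nondegeneracy-grossi-olu}, and obtain a local $C^1$ curve $g(\varepsilon)$ of solutions through $\olu_{\varepsilon^*}$. The paper then stops there, citing only the implicit function theorem; the identification $g(\varepsilon)=\olu_\varepsilon$ is left implicit and rests on Grossi's pointwise uniqueness \cite[Theorem~1.1]{MR1956951}: the IFT curve consists of positive single-peak solutions with peak near $x_0$ (by $H^1$-proximity to $\olu_{\varepsilon^*}$ and elliptic regularity), hence coincides with $\olu_\varepsilon$ for $\varepsilon$ near $\varepsilon^*$.

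Your connectedness scheme attempts to supply this identification, but the closedness step is circular. You extract a subsequential limit $u^*$ of $\olu_{\varepsilon_n}$ with $F(\varepsilon^{**},u^*)=0$ and then invoke ``local IFT uniqueness in $B_\rho(\olu_{\varepsilon^{**}})$ \ldots\ since the strong convergence places $u^*$ inside this ball.'' But the strong convergence only gives $\olu_{\varepsilon_n}\to u^*$; it provides no a~priori reason why $u^*$ should lie near $\olu_{\varepsilon^{**}}$ --- that is precisely the conclusion you are after. To identify $u^*=\olu_{\varepsilon^{**}}$ you need Grossi's uniqueness (verifying that $u^*$ is a positive single-peak solution with peak near $x_0$), not IFT local uniqueness. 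Once you grant yourself Grossi's uniqueness, the entire connectedness scaffolding becomes superfluous: at every $\varepsilon^*$ the IFT curve is immediately identified with $\olu_\varepsilon$, proving continuity directly, which is the paper's route. Your nonemptiness paragraph is likewise incomplete: the convergence of the rescaled $u_\varepsilon$ to $u_0$ as $\varepsilon\to 0$ says nothing about continuity of $\varepsilon\mapsto\olu_\varepsilon$ at any fixed interior point $\varepsilon^*>0$.
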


\begin{proof}
For $\eps>0$, let $K(\eps):= -\eps^2\Delta + V \in \cL(H^{1},H^{-1})$. Then the map $K\colon (0,\infty)\to\cL(H^1,H^{-1})$ is continuous. Moreover, since $p$ is subcritical, the nonlinear superposition operator $H^1 \to H^{-1}$, $u \mapsto |u|^{p-2}u$ is of class $C^1$. Consequently, 
  the map
$$
h\colon(0,\infty)\times H^1 \to H^{-1}, \quad \qquad
    (\varepsilon,u) \mapsto
    K(\varepsilon) u-\abs{u}^{p-2}u
$$
  is continuous, and continuously differentiable in its second
  argument. Since $\olu_\eps$ is a weak solution of (\ref{eq:grossi-1}), we have $h(\varepsilon,u_\varepsilon)=0$. Furthermore, the operator 
$$
h_u(\varepsilon,u_\varepsilon) = K(\varepsilon) -(p-1)\abs{\olu_\eps}^{p-2} \in \cL(H^1,H^{-1})
$$ 
is an isomorphism as a consequence of (\ref{eq:nondegeneracy-grossi-olu}).  Hence the claim follows from the implicit function theorem, see, e.g.,
  \cite[Theorem~15.1]{MR787404}.
\end{proof}

Since the map $\eps \mapsto \olu_\eps$ is continuous and
\begin{equation*}
  \abs{\olu_\eps}_2^2 = \int_{\dR^N} \olu^2_\eps = \eps^N
  \int_{\dR^N}u_\eps^2 = \eps^N \int_{\dR^N}u_0^2 + o(1)=
  o(1) \qquad \text{as $\eps \to 0$,}
\end{equation*}
the assertions \ref{item:9}--\ref{item:12} of
\th\ref{teo:semiclassical-existence} are already
verified. The remainder of this section is devoted to the proof
of \th\ref{teo:semiclassical-existence}\ref{item:10}.

For this we first note that the function $u_\eps \in H^1$ defined in (\ref{def-u-eps}) satisfies the rescaled equation
\begin{equation}
  \label{eq:66-1}
  -\Delta u_\eps +V_\eps(x)u_\eps = \abs{u_\eps}^{p-2}u_\eps, \qquad         u\in H^1
\end{equation}  
with
\begin{equation}
  \label{eq:def-Veps}
  V_\eps\colon \dR^N \to \dR, \qquad V_\eps(x)= V(\eps x).
\end{equation}
Moreover, by (\ref{eq:nondegeneracy-grossi-olu}), the linear operator
\begin{equation}
  \label{eq:nondegeneracy-grossi}
  B^\eps \in \cL(H^1), \qquad B^\eps v = v - (p-1)(-\Delta + V_\eps)^{-1}u_\eps^{p-2}v \qquad \text{is an isomorphism}
\end{equation}
for $\eps\in(0,\varepsilon_0)$.
We also note that the functions $u_\eps$ have uniform
exponential decay, i.e., there exist constants $\alpha,C>0$ such
that
\begin{equation}
  \label{eq:uniform-exp}
  \abs{u_\eps(x)} \le C \rme^{-\alpha \abs{x}} \qquad \text{for all $x \in \dR^N$, $\eps \in (0,\eps_0)$,}   
\end{equation}
see \cite[Lemma 4.2.(i)]{MR1956951}. Moreover,
\begin{equation}
  \label{eq:H2-conv-unif}
  u_\eps \to u_0 \qquad \text{in $H^2(\dR^N)$ and uniformly in $\dR^N$,}  
\end{equation}
see \cite[Theorem 4.1 and Lemma 4.2(ii)]{MR1956951}.  Note that
$u_\varepsilon$ satisfies \cite[Equation~(4.1)]{MR1956951} with
$c_{i,y,\varepsilon}=0$ since it is a solution of
\eqref{eq:66-1}.

We need to recall some properties of the unique radial positive
solution $u_0$ of the limit equation \eqref{eq:limit-equation}
and therefore consider the functional
\begin{equation*}
  \Phi_0^*\colon H^1\to\dR,\qquad  \Phi_0^*(u)\coloneqq   \frac12\int_{\dR^N}(\abs{\nabla u}^2+u^2)
  -\frac{1}{p}\int_{\dR^N}\abs{u}^p,
\end{equation*}
It is easy to see that $\rmD^2\Phi_0^*(u_0) \in \cL(H^1)$ has
exactly one negative eigenvalue, the value $2-p$, with
corresponding eigenspace generated by $u_0$.  Here, the symbol
$\rmD^2$ denotes the derivative of the gradient with respect to
the scalar product $\scp{\cdot,\cdot}_{H^1}$.

Its kernel is
spanned by the partial derivatives
$\partial_1{u_0},\partial_2{u_0},\dots,\partial_N{u_0}$, see
\cite[Lemma~4.2(i)]{MR94h:35072}.  Letting $\tilde H$ denote the
$\scp{\cdot,\cdot}$-orthogonal complement of
$\opspan(\partial_1{u_0},\partial_2{u_0},\dots,\partial_N{u_0})$ in
$H^1$, we therefore find that the operator
\begin{equation*}
  B^0 \in \cL(H^1), \qquad B^0 v = \rmD^2 \Phi_0^*({u_0}) v = v -
  (p-1)[\Delta+1]^{-1} {u_0}^{p-2}
\end{equation*} 
restricts to an isomorphism $\tilde H \to \tilde H$. Moreover,
$\tilde H$ contains all radial functions, so in particular
$u_*:= [\Delta+1]^{-1}{u_0} \in \tilde H$. Consequently, there
exists a unique ${z_*} \in \tilde H$ with $B^0 {z_*} = u_*$.

\begin{lemma}
  \th\label{lem:criterion}
  We have
  \begin{equation*}
    ({z_*},u_0)_2= \Bigl(\frac{N}{4}-\frac{1}{p-2}\Bigr)\abs{u_0}_2^2 =
    \frac{p-(2+4/N)}{4N(p-2)}\abs{u_0}_2^2.
  \end{equation*}
\end{lemma}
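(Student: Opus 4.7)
The plan is to identify $z_*$ in closed form by exploiting the scaling symmetry of the limit equation \eqref{eq:limit-equation}, and then to evaluate $(z_*,u_0)_2$ by a short integration by parts. First I would recast the defining relation $B^0 z_* = u_*$ in strong form: since $u_* = L^{-1}u_0$ with $L \coloneqq -\Delta+1$, applying $L$ to $B^0 z_* = u_*$ gives
\begin{equation*}
  Lz_* - (p-1)u_0^{p-2}z_* = u_0 \qquad \text{in } \dR^N.
\end{equation*}

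The key step is to produce an explicit solution of this PDE using the scaling family $u_\lambda(x) \coloneqq \lambda^{1/(p-2)}u_0(\sqrt{\lambda}\,x)$ for $\lambda>0$, which solves $-\Delta u_\lambda + \lambda u_\lambda = u_\lambda^{p-1}$ in $\dR^N$. Differentiating this parameter-family relation at $\lambda=1$, one obtains
\begin{equation*}
  w \coloneqq \frac{\partial u_\lambda}{\partial \lambda}\bigg|_{\lambda=1}
  = \frac{1}{p-2}\,u_0 + \frac{1}{2}\,x\cdot\nabla u_0 \in H^1,
\end{equation*}
where $w \in H^1$ follows from the standard exponential decay of $u_0$ and its derivatives. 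Differentiating the equation yields $Lw - (p-1)u_0^{p-2}w = -u_0$. Since $u_0$ is radial, $w$ is radial and hence orthogonal (for $\scp{\cdot,\cdot}_{H^1}$) to $\opspan(\partial_1 u_0,\dots,\partial_N u_0)$, so $w \in \tilde H$. Uniqueness of the solution in $\tilde H$ (which is the content of the isomorphism property of $B^0|_{\tilde H}$ recalled just before the lemma) therefore forces $z_* = -w$.

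With $z_* = -w$ in hand, the computation is immediate:
\begin{equation*}
  (z_*,u_0)_2 = -\frac{1}{p-2}\abs{u_0}_2^2 - \frac{1}{2}\int_{\dR^N} u_0\,(x\cdot\nabla u_0).
\end{equation*}
Writing $u_0\,\partial_i u_0 = \tfrac12 \partial_i(u_0^2)$ and integrating by parts gives $\int_{\dR^N} u_0(x\cdot\nabla u_0) = -\frac{N}{2}\abs{u_0}_2^2$, which produces
\begin{equation*}
  (z_*,u_0)_2 = \Bigl(\frac{N}{4}-\frac{1}{p-2}\Bigr)\abs{u_0}_2^2,
\end{equation*}
and an elementary rearrangement of $\frac{N}{4}-\frac{1}{p-2} = \frac{N(p-2)-4}{4(p-2)} = \frac{N(p-(2+4/N))}{4(p-2)}$ yields the second form claimed.

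The main (really the only) substantive obstacle is spotting the scaling ansatz in the second step; once one notices that $L-(p-1)u_0^{p-2}$ annihilates the $\lambda$-derivative of the mass-scaling family up to the source term $-u_0$, the identification $z_*=-w$ and all subsequent computations are routine. As a sanity check, the sign of $(z_*,u_0)_2$ is exactly the classical Vakhitov--Kolokolov dichotomy, negative for $p<2+\tfrac{4}{N}$ and positive for $p>2+\tfrac{4}{N}$, which is precisely what is needed in \eqref{eq:z-sing-perturbed}.
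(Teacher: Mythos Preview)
Your proof is correct and follows essentially the same route as the paper: both arguments introduce the scaling family $w_\lambda(x)=\lambda^{1/(p-2)}u_0(\sqrt{\lambda}\,x)$, differentiate the equation at $\lambda=1$ to see that $\tilde z:=\partial_\lambda w_\lambda|_{\lambda=1}$ solves $L\tilde z-(p-1)u_0^{p-2}\tilde z=-u_0$, and use radiality to conclude $z_*=-\tilde z$. The only cosmetic difference is the final computation: you write out $\tilde z=\frac{1}{p-2}u_0+\frac12 x\cdot\nabla u_0$ explicitly and integrate by parts, whereas the paper differentiates $|w_\lambda|_2^2=\lambda^{2/(p-2)-N/2}|u_0|_2^2$ directly; these are of course the same calculation.

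One small remark: your algebraic rearrangement gives $\frac{N}{4}-\frac{1}{p-2}=\frac{N(p-(2+4/N))}{4(p-2)}$, which is correct, while the second expression displayed in the lemma statement reads $\frac{p-(2+4/N)}{4N(p-2)}$. These differ by a factor $N^2$ (check e.g.\ $N=3$, $p=4$), so the statement as printed contains a typo; your version is the right one, and in any case only the sign --- governed by $p-(2+4/N)$ --- matters for the subsequent argument.
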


\begin{proof}
  For $\lambda>0$, consider the function
  \begin{equation*}
    w_\lambda \in H^1,\qquad w_\lambda(x)
    = \lambda^{\frac{1}{p-2}} u_0 (\sqrt{\lambda}x)\qquad \text{for $x \in \dR^N$,}
  \end{equation*}
  which is the unique radial positive solution of
  \begin{equation}
    \label{eq-tobias:1}
    -\Delta w_\lambda  + \lambda w_\lambda - w_\lambda^{p-1} = 0    
    \qquad \text{in $\dR^N$,}
  \end{equation}
  so $w_1 = u_0$. Moreover, consider
  \begin{equation*}
    \tilde z \in H^1, \qquad \tilde
    z(x)=\frac{\partial}{\partial \lambda}\Big|_{\lambda=
      1}w_\lambda(x).
  \end{equation*}
  We claim that $z_*= -\tilde z$. Indeed, we have
  $B^0 \tilde z = -u_*$ since differentiating \eqref{eq-tobias:1}
  at $\lambda=1$ yields
  \begin{equation}
    \label{eq-tobias:2}
    -\Delta \tilde z  +\tilde z - (p-1) u_0^{p-2}\tilde z
    =-u_0   \qquad \text{in $\dR^N$.}
  \end{equation}
  Moreover, $\tilde z \in \tilde H$ since $\tilde z$ is a radial
  function. By the remarks above, this implies that
  $z_*= -\tilde z$. We therefore compute that
  \begin{align*}
    (z_*,u_0)_2
    &= - (\tilde z,u_0)_2
    = - \frac12\frac{\rmd}{\rmd\lambda}\Big|_{\lambda=1}
    \abs{w_\lambda}_2^2
    =  - \frac12\frac{\rmd}{\rmd\lambda}\Big|_{\lambda=1}\Bigl(
    \lambda^{\frac{2}{p-2}}\int_{\dR^N} u_0^2(\sqrt{\lambda}x)\dint x \Bigr)\\
    & = - \frac12\frac{\rmd}{\rmd\lambda}\Big|_{\lambda=1}
    \lambda^{\frac{2}{p-2}-\frac{N}{2}}\abs{u_0}_2^2
    =\frac12\Bigl(\frac{N}{2}-\frac{2}{p-2}\Bigr)\abs{u_0}_2^2,
  \end{align*}
  as claimed.
\end{proof}
Next we collect some properties of the scaled potentials
$V_\varepsilon$, $\eps \in (0,\eps_0)$ defined in
\eqref{eq:def-Veps}. Note that these functions are uniformly
bounded and satisfy
\begin{equation}
  \label{eq:Veps-limit-1}
  \abs{V_\eps(x)-1} \le c\, \eps^2\abs{x}^2
  \qquad \text{for $x \in \dR^N$, $\eps \in (0,\eps_0)$, with a constant $c>0$.}
\end{equation}
We also note that
\begin{equation}
  \label{eq:Veps-limit-2}
  \lim_{\eps \to 0} \frac{\partial_i V_\eps(x)}{\eps^2} =\sum_{j=1}^N \partial_{ij} V(0) x_j 
  \qquad \text{locally uniformly in $x \in \dR^N$}
\end{equation}
for $i=1,\dots,N$, so
\begin{equation}
  \label{eq:Veps-limit-2-1}
  \abs{\partial_i V_\eps(x)} \le c\, \eps^2 \abs{x}
  \qquad \text{for $x \in \dR^N$, $\eps \in (0,\eps_0)$, with a constant $c>0$.}
\end{equation}
Next we consider
\begin{equation*}
  z_\eps:= [B^{\eps}]^{-1}(-\Delta + V_\eps)^{-1}u_\eps \in
  H^1, \qquad \text{for $\eps \in (0,\eps_0)$,}
\end{equation*}
where $B^\eps$ is defined in \eqref{eq:nondegeneracy-grossi}.
Hence $z_\eps$ is the unique weak solution of
\begin{equation}\label{eq:11}
  -\Delta z_\eps + V_\varepsilon(x) z_\eps -(p-1)u_\eps^{p-2}z_\eps
  = u_\eps \qquad \text{in $\dR^N$.}
\end{equation}
We claim that
\begin{equation}
  \label{eq:limit-scalar-product-relation}
  (z_\eps, u_\eps)_2 \to (z_*,u_0)_2 \qquad \text{as $\eps \to 0$.}  
\end{equation}
To prove this, we argue by contradiction and suppose that there
exists $\delta>0$ and a sequence $(\eps_n)_n \in (0,\eps_0)$ such
that $\eps_n \to 0$ as $n \to \infty$ and
\begin{equation}
  \label{eq:limit-scalar-product-relation-contra}
  \abs{(z_n,u_n)_2 - (z_*,w)_2} \ge \delta  \qquad \text{for all $n \in \dN$, where $z_n:= z_{\eps_n}$ and $u_n:= u_{\eps_n}.$}  
\end{equation}
We first claim that the sequence $(z_n)_n$ is bounded in
$H^1$. Indeed, if not, we can pass to a subsequence such that
$\norm{z_n}_{H^1}>0$ for all $n$ and $\norm{z_n}_{H^1} \to \infty$ as
$n \to \infty$. We then consider $y_n:= \frac{z_n}{\norm{z_n}_{H^1}}$,
and we may pass to a subsequence such that $y_n \weakto y$ in
$H^1$.  Since $y_n$ is a weak solution of the equation
\begin{equation}
  \label{eq:weak-eq-y-n}
  -\Delta y_n + V_{\varepsilon_n} y_n   -(p-1)u_n^{p-2}y_n = \frac{u_n}{\norm{z_n}_{H^1}}  \qquad \text{in $\dR^N$ for every $n$,}
\end{equation}
we have
\begin{align*}
  \int_{\dR^N} \Bigl[\nabla y \nabla v + y v-  (p-1)u_0^{p-2} v\Bigr] &= \lim_{n \to \infty} \int_{\dR^N} \Bigl[\nabla y_n \nabla v + V_{\eps_n} y_n v-  (p-1)u_n^{p-2} y_n v\Bigr]\\
  &= \lim_{n \to \infty} \frac{1}{\norm{z_n}_{H^1}} \int_{\dR^N} u_n
  v = 0 \qquad \text{for every $v \in H^1$.}
\end{align*}
Consequently, $y \in H^1$ is a weak solution of
$-\Delta y +y -(p-1)u_0^{p-2}y=0$ in $\dR^N$, which means that
$B^0 y = 0$. Hence there exist $a_1,\dots,a_N \in \dR$
with $y = \sum _{i=1}^N a_i \partial_i u_0$.  Next we
note that $\partial_i u_n$ solves the equation
\begin{equation*}
  -\Delta (\partial_i u_n) + V_\eps \partial_i u_n + u_n \partial_i
  V_{\eps_n} - (p-1)u_n^{p-2}\partial_i u_n= 0 \qquad \text{for
    $i=1,\dots,N$.}
\end{equation*}
Multiplying this equation with $y_n$ and integrating over
$\dR^N$, we obtain by \eqref{eq:weak-eq-y-n} that
\begin{equation*}
  \int_{\dR^N} u_n \, y_n \partial_i V_{\eps_n}  =-
  \frac{1}{\norm{z_n}_{H^1}} \int_{\dR^N}u_n \partial_i u_n  = 0 \qquad
  \text{for all $n \in \dN$.}
\end{equation*}
Dividing this equation by $\eps_n^2$ and passing to the limit, we
may then use \eqref{eq:uniform-exp}, \eqref{eq:Veps-limit-2},
\eqref{eq:Veps-limit-2-1} and Lebegue's Theorem to see that
\begin{align*}
  0 &= \lim_{n \to \infty}\frac{1}{\eps_n^2} \int_{\dR^N} u_n\, y_n \partial_i V_{\eps_n}    = \sum_{j=1}^N \int_{\dR^N}  \partial_{ij} V(0) x_j u_0(x) y(x) \dint x\\
  &=\sum_{\ell, j=1}^N a_\ell \partial_{ij} V(0) \int_{\dR^N} x_j
  u_0(x) \partial_{\ell} u_0(x) \dint x =-
  \frac{\abs{u_0}_2^2}{2} \sum_{j=1}^N a_j \partial_{ij} V(0)
  \quad \text{for $i=1,\dots,N$.}
\end{align*}
Here we have integrated by parts in the last step. Since $0$ is a
nondegenerate critical point of $V$ by assumption, we conclude
that $a_j=0$ for $j=1,\dots,N$ and therefore $y=0$.  This implies
in particular that $(y_n^2)$ is bounded in $L^{p/2}$ and that
$y_n^2\to0$ in $L^{p/2}_\loc$.  Moreover,
$u_n^{p-2}\to u_0^{p-2}$ in $L^{p/(p-2)}$.  Testing
\eqref{eq:weak-eq-y-n} with $y_n$ we obtain that
\begin{equation*}
  \int_{\dR^N} \bigl(\abs{\nabla y_n}^2 + V_{\varepsilon_n}\abs{y_n}^2
  \bigr)= (p-1) \int_{\dR^N}u_n^{p-2}\abs{y_n}^2 +
  \frac{1}{\norm{z_n}_{H^1}} \int_{\dR^N}u_n y_n  \to 0
\end{equation*}
as $n \to \infty$ and therefore $\norm{y_n}_{H^1} \to 0$ as
$n \to \infty$, which is a contradiction. We thus conclude that
the sequence $(z_n)_n$ is bounded.  We may thus pass to a
subsequence such that $z_n \weakto z$ in $H^1$. We then have by
\eqref{eq:11}
\begin{align*}
  \int_{\dR^N} \Bigl[\nabla z \nabla v + z v-  (p-1)u_0^{p-2} v\Bigr] &= \lim_{n \to \infty} \int_{\dR^N} \Bigl[\nabla z_n \nabla v + V_{\eps_n} z_n v-  (p-1)u_n^{p-2} z_n v\Bigr]\\
  &= \lim_{n \to \infty} \int_{\dR^N} u_n v = \int_{\dR^N}
  u_0 v \qquad \text{for every $v \in H^1$.}
\end{align*}
Consequently, $z \in H^1$ is a weak solution of
$-\Delta z +z -(p-1)u_0^{p-2}z=u_0$ in $\dR^N$, which means that
$B^0 z = u_*$. As a consequence, $B^0(z-z_*)=0$, which implies
that
$z-z_* \in \opspan(\partial_1 u_0,\partial_2u_0,\dots,\partial_N u_0)$ and
therefore $(z-z_*,u_0)_2 = 0$. We thus conclude that
\begin{equation*}
  (z_n,u_n)_2 \to (z,u_0)_2 = (z_*,u_0)_2 \qquad \text{as
    $n \to \infty$,}
\end{equation*}
contrary to \eqref{eq:limit-scalar-product-relation-contra}. This
shows \eqref{eq:limit-scalar-product-relation}, as
claimed. Combining \eqref{eq:limit-scalar-product-relation} with
\th\ref{lem:criterion}, we see that for fixed
$p \in (2,2^*) \ssm \{2+\frac{4}{N}\}$, we may take
$\eps_0>0$ smaller if necessary such that
\begin{equation}
  \label{eq:z-sing-perturbed-rescaled}
  (z_\eps,u_\eps)_2 <0 \quad \text{if $2<p < 2 + \frac{4}{N}$}\qquad \text{and}\qquad (z_\eps,u_\eps)_2 >0 \quad \text{if $2 + \frac{4}{N}<p<2^*$.}          
\end{equation}
Moreover, from \eqref{eq:z-sing-perturbed-rescaled} we
immediately deduce \eqref{eq:z-sing-perturbed} by rescaling.
Since $\olu_\varepsilon$ is a critical point of
$\Phi_\varepsilon$, it is also a critical point of
$\Phi_\varepsilon|_{\sum_{\abs{\olu_\varepsilon}_2^2}}$ with
Lagrange multiplier $0$, which implies, together with
\eqref{eq:z-sing-perturbed} and \th\ref{def:fully-nondegenerate},
that $\olu_\eps$ is a fully nondegenerate critical point of
$\Phi_\eps|_{\Sigma_{\abs{\olu_\eps}^2_2}}$.

To conclude the proof of
\th\ref{teo:semiclassical-existence}, it remains to compute
the Morse index of $\olu_\eps$ for $\eps >0$ small. From
\eqref{eq:z-sing-perturbed} and
\th\ref{lem:simple-but-imp}, we deduce that
\begin{equation}
  \label{eq:morse-shift-p}
  m(\olu_\eps)= m_\rmf(\olu_\eps)-1   \text{ if $2<p < 2 + \frac{4}{N}$}\quad \text{and}\quad m(\olu_\eps)=m_\rmf(\olu_\eps)   \text{ if $2 + \frac{4}{N}<p<2^*$.}          
\end{equation}
It therefore suffices to compute the free Morse index
$m_\rmf(\olu_\eps)$, which by rescaling is the same as the free
Morse index $m_\rmf(u_\eps)$ with respect to the rescaled potential
\begin{equation*}
  \Phi_\eps^* \colon H^1\to\dR,\qquad  \Phi_\eps^*(u)\coloneqq   \frac12\int_{\dR^N}(\abs{\nabla u}^2+V_\eps u^2)
  -\frac{1}{p}\int_{\dR^N}\abs{u}^p.
\end{equation*}
More precisely, the equalities in
\eqref{eq:morse-index-sing-perturbed} follow from
\eqref{eq:morse-shift-p} once we have shown that
\begin{equation}
  \label{eq:free-morse-u-eps}
  m_\rmf(u_\eps) = m_V +1  \qquad \text{for all $p \in (2,2^*)$ and $\eps>0$ small,}
\end{equation}
where $m_V$ denotes the number of negative eigenvalues of the
Hessian of $V$ at $x_0$.  The argument is partly contained in the
proof of \cite[Theorem 2.5]{MR2403325}. Nevertheless, since some
details are omitted there, we give a complete proof of
\eqref{eq:free-morse-u-eps} in
Appendix~\ref{sec:append-proof-refeq:f}. The proof of
\th\ref{teo:semiclassical-existence} is thus finished.

\section{Orbital instability}
\label{sec:orbital-instability}
This section is devoted to the proof of
\th\ref{thm:orbital-instability}.  To simplify the presentation
we only give a proof for the case $N\ge3$; the cases $N=1,2$ can
be treated similarly, slightly modifying the arguments below.

Throughout this section, we consider the special case where the
nonlinearity $f$ is odd.  We may therefore write it in the form
$f(t)= g(\abs{t}^2)t$, where $g\in C([0,\infty))\cap C^1((0,\infty))$
satisfies $g(0)=0$ and
\begin{equation*}
  \lim_{s\to\infty}\frac{g'(s)}{s^{\frac{2^*}{2}-2}}=0.
\end{equation*}
Note that in this case we have
\begin{equation*}
  \Phi(u)= \frac{1}{2} \norm{u}^2 - \int_{\dR^N}G(\abs{u}^2) =
  \frac{1}{2} \int_{\dR^N} \bigl(\abs{\nabla u}^2 +V\abs{u}^2\bigr)
  - \int_{\dR^N}G(\abs{u}^2)
\end{equation*}
for $u \in H^1$ with
$G(t)= \frac{1}{2} \int_0^{t} g$ for $t \ge 0$. To
prove the assertion on orbital instability given in
\th\ref{thm:orbital-instability}, we apply an argument
from \cite{MR1257002} with some modifications.  We identify
$\dC$ with $\dR^2$ and write the time-dependent nonlinear
Schrödinger equation \eqref{eq:time-schroedinger-0} as the
following system in $\mathbf{u}=
\begin{psmallmatrix} u_1\\u_2
\end{psmallmatrix}$ with $u_1 = {\Real}\, u$,
$u_2 = {\Imag}\, u$:
\begin{equation}
  \label{eq:time-schroedinger-system}
  \mathbf{u}_t
  = J \Bigl(- \Delta \mathbf{u} + V(x)\mathbf{u} -
  g(u_1^2+u_2^2)\mathbf{u}\Bigr)
  \qquad \text{with}\quad J\coloneqq \left(\begin{array}{cc}
      0&-1\\
      1&0  
    \end{array}
  \right).  
\end{equation}
In order to set up the functional analytic equation for this
system, we denote the dual paring between $H^{-1}$
and $H^1$  by $\scp{\cdot, \cdot}_{*}$. We put
$\cH:= H^1 \times H^1$ and write $\cH^*= H^{-1} \times H^{-1}$ for the
topological dual of $\cH$.  Recalling that we are assuming
$\min\sigma(-\Delta+V)>0$, we use the scalar product
\begin{equation*}
  \scp{u , v}_\cH = \scp{u_1,v_1} + \scp{u_2,v_2} =
  \sum_{i=1}^2 \int_{\dR^N} \bigl(\nabla u_i \cdot \nabla v_i +
  V u_i v_i\bigr),\qquad \text{for $u,v \in \cH$,}
\end{equation*}
and denote the induced norm by $\norm{\cdot}_{\cH}$. The dual
pairing between $\cH^*$ and $\cH$ is given by
\begin{equation*}
  \scp{\mathbf{u} ,\mathbf{v}}_{\cH^*,\cH} = \scp{u_1,
  v_1}_{*} + \scp{u_2, v_2}_{*} \qquad \text{for }
    \mathbf{u}=\binom{u_1}{u_2} \in \cH^*, \mathbf{v}=\binom{v_1}{v_2} \in \cH.
\end{equation*}
As usual in the context of Gelfand triples, we consider the
continuous embedding $I\colon H^1 \hookrightarrow H^{-1}$ given by
\begin{equation*}
  \scp{I u , v}_{*} : = \int_{\dR^N} u v  \qquad
  \text{for $u,v \in H^1$.}
\end{equation*}
The corresponding embedding $\cH \hookrightarrow \cH^*$ will also
be denoted by $I$, i.e., we set
\begin{equation*}
  \scp{I \mathbf{u} , \mathbf{v}}_{\cH^*,\cH} : =
  \int_{\dR^N} (u_1 v_1 + u_2 v_2) \qquad \qquad \text{for
    $\mathbf{u}=\binom{u_1}{u_2}, \mathbf{v}=\binom{v_1}{v_2} \in
    \cH$.}
\end{equation*}
With this notation, we write system
\eqref{eq:time-schroedinger-system} in the more abstract form of a
Hamiltonian system. For this we consider the functionals
\begin{equation*}
  \tilde \Phi \in C^2(\cH, \dR), \qquad \tilde \Phi(\mathbf{u})=
  \frac{1}{2}\norm{\mathbf{u}}_\cH^2- \int_{\dR^N}
  G(u_1^2+u_2^2)
\end{equation*}
and
\begin{equation*}
  \tilde \Phi_\lambda \in C^2(\cH, \dR), \qquad \tilde
  \Phi_\lambda(\mathbf{u})= \Phi(\mathbf{u})- \frac{\lambda}{2} \int_{\dR^N}
  \abs{\mathbf{u}}^2.
\end{equation*}
With this notation, \eqref{eq:time-schroedinger-system} writes as
\begin{equation*}
  (I \mathbf{u})_t = J \mathbf{\rmd } \tilde \Phi(\mathbf{u}) \qquad \text{in $\cH^*$}
\end{equation*}
where $\mathbf{\rmd } \tilde \Phi\colon \cH \to \cH^*$ denotes the
derivative of $\tilde \Phi$ and $J$ is regarded as a matrix
multiplication operator on $\cH^*= H^{-1} \times H^{-1}$.

Now let $\varphi \in \Sigma_\alpha$ satisfy the assumptions of
\th\ref{thm:orbital-instability}, and let
$\lambda \in \dR$ be the corresponding Lagrangian
multiplier. Moreover, in the following, we let
$\mathbf{\rmd ^2} \tilde \Phi_\lambda (\mathbf{\psi}) \in
\cL(\cH,\cH^*)$ denote the second derivative of
$\tilde \Phi_\lambda$ at
$\mathbf{\psi}:=
\begin{psmallmatrix} \varphi\\0
\end{psmallmatrix}
\in \cH$, which by direct
computation is given as
\begin{equation*}
  \mathbf{\rmd ^2} \tilde \Phi_\lambda  (\mathbf{\psi}) = 
  \left(\begin{array}{cc}
      L_1&0\\
      0&L_2  
    \end{array}
  \right),\qquad \text{where}\quad
  \left\{
    \begin{aligned}
      &L_1 w = -\Delta w + [V(x)-\lambda] w - f'(\varphi) w,\\
      &L_2 w = -\Delta w + [V(x)-\lambda] w - g(\abs{\varphi}^2) w.
    \end{aligned}
  \right.
\end{equation*}
Note here that $f'(t)= g(\abs{t}^2)+ 2 g'(\abs{t}^2)t^2$, so by
assumption \ref{item:3} we have $L_i \in \cL(H^1, H^{-1})$ for
$i=1,2$.  Similarly as noted in \cite[p. 187]{MR1257002}, the
orbital instability of the solitary wave solution $u_\varphi$ in
\eqref{eq:def-solitary-wave-solution-0} follows by the same
argument as in the proof of \cite[Theorem 6.2]{MR1081647} once we
have established the following.

\begin{proposition}
  \th\label{prop:orbital-instability}
  The operator
  \begin{equation*}
    \mathbf{M}:= J \mathbf{\rmd ^2} \tilde \Phi_\lambda (\mathbf{\psi})
    \in  \cL(\cH,\cH^*)
  \end{equation*} 
  has a positive real eigenvalue, i.e., there exists $\rho>0$ and
  $\mathbf{w} \in \cH \ssm \{0\}$ such that
  $\mathbf{M} \mathbf{w} = \rho I \mathbf{w}$.
\end{proposition}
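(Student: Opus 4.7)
The plan is to reduce the eigenvalue equation $\mathbf{M}\mathbf{w}=\rho I\mathbf{w}$ to a self-adjoint scalar variational problem and then exploit the positive Morse index hypothesis on $\varphi$ to produce a negative eigenvalue, along the lines of \cite{MR1257002}. First I would analyze $L_2$. Since $f(\varphi)=g(|\varphi|^2)\varphi$, the critical point equation for $\varphi$ gives directly $L_2\varphi=-\Delta\varphi+[V-\lambda]\varphi-g(|\varphi|^2)\varphi=0$, so $\varphi$ is a positive eigenfunction of $L_2$ at the eigenvalue $0$. Exactly the Persson-type argument in the proof of \th\ref{lem:positive-spectrum}, which uses precisely the assumption $\lambda<\inf\sigma_\rmess(-\Delta+V)$, yields $\inf\sigma_\rmess(L_2)\ge\gamma_*>0$. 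Hence $L_2$ is non-negative self-adjoint on $L^2$ with $\ker L_2=\opspan(\varphi)$ and $0$ isolated in $\sigma(L_2)$, so the pseudo-inverse $L_2^{-1}$ is a bounded positive self-adjoint operator on $\{\varphi\}^\perp\cap L^2$, mapping this space onto $\{\varphi\}^\perp\cap H^2$ by elliptic regularity.

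Next I would reformulate the eigenvalue problem. Componentwise, $\mathbf{M}\mathbf{w}=\rho I\mathbf{w}$ reads $L_1 w_1=\rho I w_2$ and $L_2 w_2=-\rho I w_1$ in $H^{-1}$. Testing the second equation with $\varphi$ and using $L_2\varphi=0$ forces $(w_1,\varphi)_2=0$. Applying $L_2^{-1}$ to the second equation gives $w_2=-\rho L_2^{-1}w_1+c\varphi$ for some $c\in\dR$, and substituting into the first produces $L_1 w_1+\rho^2 L_2^{-1}w_1=\rho c\,\varphi$. This is the Euler--Lagrange equation for
\begin{equation*}
  \mu_*\coloneqq\inf\Bigl\{\scp{L_1 w,w}_*:w\in H^1\cap\{\varphi\}^\perp,\ (L_2^{-1}w,w)_2=1\Bigr\},
\end{equation*}
with $\rho^2=-\mu_*$ and $c$ the multiplier enforcing $(w,\varphi)_2=0$. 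The Morse-index hypothesis $m(\varphi)>0$ furnishes some $w_0\in T_\varphi\Sigma_\alpha=H^1\cap\{\varphi\}^\perp$ with $\scp{L_1 w_0,w_0}_*<0$; since $L_2^{-1}$ is positive definite on $\{\varphi\}^\perp$, this forces $\mu_*<0$ after normalizing $w_0$.

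Finally I would attain $\mu_*$ and assemble the eigenvector. Writing $L_1=L_2+K$ with $K=g(|\varphi|^2)-f'(\varphi)=-2g'(|\varphi|^2)\varphi^2$, the standard exponential decay of $\varphi$ together with \ref{item:3} shows that $K$ is multiplication by a rapidly decaying function, hence relatively compact with respect to $-\Delta$. Consequently $\mathcal{L}\coloneqq L_2^{1/2}L_1 L_2^{1/2}=L_2^2+L_2^{1/2}KL_2^{1/2}$ is a compact perturbation of $L_2^2$ on $\{\varphi\}^\perp\cap L^2$, so $\sigma_\rmess(\mathcal{L})\subset[\gamma_*^2,\infty)$ and any negative part of $\sigma(\mathcal{L})$ (equivalently, of the generalized eigenvalue problem above) consists of finitely many isolated eigenvalues. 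A standard Rayleigh-quotient argument then shows $\mu_*$ is attained by some $w_*\in H^1\cap\{\varphi\}^\perp$. Setting $\rho\coloneqq\sqrt{-\mu_*}>0$ and $w_2\coloneqq-\rho L_2^{-1}w_*+(c/\rho)\varphi\in H^1$ (using $L_2^{-1}w_*\in H^2$ from elliptic regularity), a short check gives $L_1 w_*=\rho w_2$ and $L_2 w_2=-\rho w_*$, so $\mathbf{w}\coloneqq(w_*,w_2)^T\in\mathcal{H}\setminus\{0\}$ satisfies $\mathbf{M}\mathbf{w}=\rho I\mathbf{w}$.

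The principal obstacle will be the functional-analytic work behind the last step: the form $\scp{L_1\cdot,\cdot}_*$ is not coercive on $\{(L_2^{-1}w,w)_2=1\}$, so the attainment of $\mu_*$ relies delicately on the spectral gap of $L_2$ at $0$ (making $L_2^{\pm 1/2}$ well-behaved on $\{\varphi\}^\perp$) together with the exponential decay of $\varphi$ (making $K$ a compact perturbation). Both features depend essentially on the hypothesis $\lambda<\inf\sigma_\rmess(-\Delta+V)$, which is precisely why that assumption enters the statement of \th\ref{thm:orbital-instability}.
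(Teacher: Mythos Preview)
Your overall reduction coincides with the paper's: both show that $L_2\ge 0$ with simple isolated kernel $\opspan(\varphi)$, set up the scalar Rayleigh quotient
\[
\mu=\inf_{v\in\Lambda\setminus\{0\}}\frac{\scp{L_1v,v}_*}{\scp{IL_2^{-1}Iv,v}_*}
\qquad\text{on }\Lambda=\{v\in H^1:(v,\varphi)_2=0\},
\]
obtain $\mu<0$ from the positive Morse index, and then assemble the eigenvector as $\mathbf{w}=(v,\,-\rho L_2^{-1}Iv+\rho^{-1}\beta\varphi)$ with $\rho=\sqrt{-\mu}$ and $v$ a minimizer. Your final check is exactly the paper's.

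The only genuine difference is how attainment of $\mu$ is proved. The paper does \emph{not} conjugate by $L_2^{1/2}$; it works directly with a spectral splitting $\Lambda=V^-\oplus V^+$ of $L_1$ (not $L_2$). Since $\inf\sigma_\rmess(L_1)=\inf\sigma_\rmess(-\Delta+V-\lambda)>0$, one has $\dim V^-<\infty$ and coercivity $\scp{L_1w,w}_*\ge\delta\|w\|^2$ on $V^+$. For a minimizing sequence with $\scp{L_1v_n,v_n}_*<0$ one normalizes $\|v_n^-\|=1$, bounds $\|v_n^+\|$ from the negativity, passes to a weak limit $v\ne 0$, and uses weak lower semicontinuity of both numerator and denominator (together with $\mu<0$) to conclude $q(v)\le\mu$. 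This is elementary and uses only $\sigma_\rmess(L_1)>0$.

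Your conjugation route is in principle viable but more delicate than you indicate, and the key step is not justified. The operator $\mathcal{L}=L_2^{1/2}L_1L_2^{1/2}=L_2^2+L_2^{1/2}KL_2^{1/2}$ is fourth order, and the claim that $L_2^{1/2}KL_2^{1/2}$ is a compact (relatively compact) perturbation of $L_2^2$ does \emph{not} follow from relative $(-\Delta)$-compactness of $K$ alone: $L_2^{1/2}$ is unbounded on $L^2$, so one must control domains and show, e.g., compactness of $L_2^{1/2}KL_2^{-3/2}$ on $L^2$, which you do not do. You also need to match the form domain of $\mathcal{L}$ with the range of the substitution $z=L_2^{-1/2}w$ for $w\in H^1$. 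None of this is insurmountable, but the paper's direct argument avoids the fourth-order operator entirely and makes clear that only $\lambda<\inf\sigma_\rmess(-\Delta+V)$ is needed.
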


The remainder of this section is devoted to the proof of
\th\ref{prop:orbital-instability}. We first note that
\begin{equation*}
  L_2 \varphi = 0 \quad \text{in $H^{-1}$,}
\end{equation*}
since $\varphi$ is a critical point of $\Phi |_{\Sigma_\alpha}$
with Lagrangian multiplier $\lambda$. Moreover, since
$\lambda < \inf \sigma_\rmess(-\Delta + V)$ by assumption, and
since $g(\abs{\varphi}^2)$ vanishes at infinity, Persson's
Theorem \cite[Theorem~14.11]{MR1361167} implies that
\begin{equation*}
  0 < \inf \sigma_\rmess(-\Delta + V-\lambda)= \inf \sigma_\rmess
  (L_2).
\end{equation*}
Since moreover $\varphi$ is a positive eigenfunction of $L_2$
corresponding to the eigenvalue $0$, it follows that
$0= \inf \sigma(L_2)$ is a simple isolated eigenvalue.
Consequently, putting
\begin{equation*}
  \tilde \Lambda:= \bigl\{v \in H^{-1} \bigm| \scp{v, \varphi
}_*=0\}  \subset   H^{-1}
\end{equation*}
and
\begin{equation*}
  \Lambda:= I^{-1}(\tilde \Lambda) = \Bigl\{v \in H^1 \Bigm|
  \int_{\dR^N} v \varphi  = 0 \Bigr\}  \subset   H^1,
\end{equation*}
we see that the quadratic form
$v \mapsto \scp{L_2 v ,v}_*$ is positive definite on
$\Lambda$ and that $L_2$ defines an isomorphism
$\Lambda \mapsto \tilde \Lambda$. From these properties, we
deduce the following.
\begin{lemma}
  \th\label{lem:orbital-instability-easy}
  We have $\scp{I L_2^{-1}I v, v}_* >0$ for all
  $v \in \Lambda \ssm \{0\}$.
\end{lemma}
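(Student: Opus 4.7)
The plan is to reduce the claimed positivity to the positive definiteness of the quadratic form $w \mapsto \scp{L_2 w, w}_*$ on $\Lambda$, which has already been established in the paragraph preceding the lemma (as a consequence of $0$ being a simple isolated eigenvalue of $L_2$ with positive eigenfunction $\varphi$). The key observation is that the isomorphism $L_2 \colon \Lambda \to \tilde{\Lambda}$ lets us interpret $w := L_2^{-1} I v$ as an honest element of $\Lambda$, and then unwinding the definitions of $I$ and of the dual pairing identifies the scalar $\scp{I L_2^{-1} I v, v}_*$ with $\scp{L_2 w, w}_*$.

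More concretely, first I would verify that $w := L_2^{-1} I v$ is well defined and belongs to $\Lambda \setminus \{0\}$: since $v \in \Lambda$ we have $\scp{Iv, \varphi}_* = \int_{\dR^N} v \varphi = 0$, so $Iv \in \tilde{\Lambda}$; since $L_2 \colon \Lambda \to \tilde{\Lambda}$ is an isomorphism and $Iv \neq 0$ (because $v \neq 0$ as an element of $H^1 \hookrightarrow H^{-1}$), $w$ is a nonzero element of $\Lambda$. Second, I would compute
\begin{equation*}
  \scp{I L_2^{-1} I v, v}_* \;=\; \scp{I w, v}_* \;=\; \int_{\dR^N} w v,
\end{equation*}
directly from the definition of $I$. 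Third, testing the identity $L_2 w = I v$ in $H^{-1}$ against $w \in H^1$ gives
\begin{equation*}
  \scp{L_2 w, w}_* \;=\; \scp{I v, w}_* \;=\; \int_{\dR^N} v w.
\end{equation*}
Combining these two equalities yields $\scp{I L_2^{-1} I v, v}_* = \scp{L_2 w, w}_*$.

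Finally, since $w \in \Lambda \setminus \{0\}$ and the quadratic form $u \mapsto \scp{L_2 u, u}_*$ is positive definite on $\Lambda$ (as noted above), the right-hand side is strictly positive, concluding the proof. There is no real obstacle here beyond keeping the three different incarnations of the pairing (the $H^1$-scalar product, the duality pairing $\scp{\cdot,\cdot}_*$, and the $L^2$-integral induced by $I$) straight; once the identification $\scp{Iu, v}_* = \int u v$ is used consistently, the statement reduces to one line.
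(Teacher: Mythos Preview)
Your proof is correct and follows essentially the same argument as the paper: define $w = L_2^{-1} I v \in \Lambda \setminus \{0\}$ (the paper calls it $\tilde v$), use the symmetry $\scp{Iw,v}_* = \scp{Iv,w}_*$ to rewrite the quantity as $\scp{L_2 w, w}_*$, and conclude by the positive definiteness of $\scp{L_2 \cdot,\cdot}_*$ on $\Lambda$. Your write-up is slightly more explicit about why $Iv \neq 0$ and about the intermediate integral identities, but the logic is identical.
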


\begin{proof}
  Let $v \in \Lambda \ssm \{0\}$, then
  $Iv \in \tilde \Lambda$ and by the remarks above there exists
  $\tilde v \in \Lambda \ssm \{0\}$ with
  $L_2 \tilde v = I v$. Consequently, we have
  \begin{equation*}
    \scp{I L_2^{-1}I v, v}_* = \scp{I \tilde v, v
}_* = \scp{Iv, \tilde v}_* = \scp{L_2 \tilde
    v, \tilde v}_* >0,
  \end{equation*}
  by the positive definiteness of the quadratic form
  $\tilde v \mapsto \scp{L_2 \tilde v ,\tilde v}_*$ on
  $\Lambda$.
\end{proof}

The following lemma is the key step in the proof of
\th\ref{prop:orbital-instability}. It resembles
\cite[Lemma 2.2]{MR1257002}, but we need to prove it by a
different (more general) argument since our setting does not
satisfy the assumptions in \cite{MR1257002}.

\begin{lemma}
  \th\label{lem:properties-h-lambda-2}
  We have
  \begin{equation*}
    \mu:= \inf_{v \in \Lambda \ssm \{0\}} \frac{\scp{L_1 v, v
}_*}{\scp{I L_2^{-1}Iv,v}_*} \quad \in \quad
    (-\infty,0).
  \end{equation*}
  Moreover, $\mu$ is attained at some
  $v \in \Lambda \ssm \{0\}$ satisfying the equation
  \begin{equation}
    \label{eq:v-prae-eigenvalue-eq}
    L_1 v = \mu I L_2^{-1} I v + I \beta \varphi \qquad \text{in $H^{-1}$.}
  \end{equation}
  for some $\beta \in \dR$.
\end{lemma}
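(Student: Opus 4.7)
The strategy is to view $\mu$ as the infimum of a generalized Rayleigh quotient on $\Lambda$ and proceed in three stages: sign, finiteness, and attainment, with the Euler--Lagrange equation following from the last.

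\emph{Sign and finiteness.}
The positive Morse index assumption on $\varphi$, together with the identification of the Hessian on $T_\varphi\Sigma_\alpha = \Lambda$ with $\langle L_1\cdot,\cdot\rangle_*$ via \eqref{eq:formula-second-der-J-alpha}, produces some $v_0 \in \Lambda$ with $\langle L_1 v_0, v_0\rangle_* < 0$; combined with $\langle K v_0, v_0\rangle_* > 0$ from \th\ref{lem:orbital-instability-easy}, this gives $\mu < 0$. For the lower bound, the plan is to write $L_1 = L_2 - M$ with the bounded multiplication operator $M := 2g'(\varphi^2)\varphi^2$, where boundedness (in fact, decay at infinity) of $\varphi$ follows from standard elliptic regularity together with $\lambda < \inf\sigma_\rmess(-\Delta+V)$. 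Since $0$ is the simple isolated eigenvalue of $L_2$ identified in the preceding discussion, the restriction of $L_2$ to the $L^2$-closure of $\Lambda$ is positive self-adjoint with spectrum contained in $[c_0,\infty)$ for some $c_0 > 0$. Writing $d\rho_v := d(E_\nu v, v)_2$ for the associated spectral measure, the representations
\begin{equation*}
  \langle L_2 v, v\rangle_* = \int \nu\,d\rho_v,\qquad
  \langle Kv, v\rangle_* = \int \nu^{-1}\,d\rho_v,\qquad
  |v|_2^2 = \int d\rho_v,
\end{equation*}
together with the elementary pointwise estimate $\nu - \|M\|_\infty \geq -\|M\|_\infty^2\,\nu^{-1}$ valid for all $\nu > 0$, yield $\langle L_1 v, v\rangle_* \geq -\|M\|_\infty^2\,\langle Kv, v\rangle_*$, and hence $\mu \geq -\|M\|_\infty^2$.

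\emph{Attainment.}
This is the main obstacle, since $K$ fails to be coercive on $H^1$, so that $H^1$-boundedness of a minimizing sequence is not automatic. Taking $(v_n) \subset \Lambda$ with $\langle Kv_n, v_n\rangle_* = 1$ and $\langle L_1 v_n, v_n\rangle_* \to \mu$, Cauchy--Schwarz inside the spectral integral gives
\begin{equation*}
  |v_n|_2^2 \leq \langle L_2 v_n, v_n\rangle_*^{1/2}\,\langle Kv_n, v_n\rangle_*^{1/2} = \langle L_2 v_n, v_n\rangle_*^{1/2},
\end{equation*}
which combined with $\langle L_2 v_n, v_n\rangle_* \leq \langle L_1 v_n, v_n\rangle_* + \|M\|_\infty\,|v_n|_2^2$ produces a quadratic inequality that bounds $|v_n|_2^2$ and hence $\|v_n\|$ uniformly. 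Extracting a subsequence $v_n \weakto v$ in $H^1$, a Brézis--Lieb type splitting $v_n = v + s_n$ with $s_n \weakto 0$, using the decay of $f'(\varphi)$, $g(\varphi^2)$, and $M$ at infinity (combined with $s_n \to 0$ in $L^2_\rmloc$) to annihilate all cross terms, produces
\begin{equation*}
  \mu = \langle L_1 v, v\rangle_* + \gamma,\qquad 1 = \langle Kv, v\rangle_* + \beta,\qquad \beta,\gamma \geq 0,
\end{equation*}
with $\gamma := \lim\langle L_2 s_n, s_n\rangle_*$ and $\beta := \lim\langle K s_n, s_n\rangle_*$, both nonnegative since $L_2$ and $K$ define positive quadratic forms on $\Lambda$. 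Since $\mu < 0$ while $\gamma \geq 0$, the first identity forces $\langle L_1 v, v\rangle_* < 0$, in particular $v \neq 0$, whence $\langle Kv, v\rangle_* > 0$ by \th\ref{lem:orbital-instability-easy}. Applying the infimum property of $\mu$ at $v$ gives $\langle L_1 v, v\rangle_* \geq \mu\,\langle Kv, v\rangle_*$, i.e.\ $\gamma \leq \mu\beta$; with $\mu < 0$ and $\beta,\gamma \geq 0$ this forces $\beta = \gamma = 0$, so $v$ realizes the infimum with $\langle Kv, v\rangle_* = 1$.

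\emph{Euler--Lagrange equation.}
Applying the Lagrange multiplier rule to the minimization of $\langle L_1 v, v\rangle_*$ subject to the two constraints $\langle Kv, v\rangle_* = 1$ and $(v, \varphi)_2 = 0$ yields constants $\mu', \tilde\beta \in \dR$ with $L_1 v = \mu' Kv + \tilde\beta I\varphi$ in $H^{-1}$. Pairing with $v$ and using $(v, \varphi)_2 = \langle I\varphi, v\rangle_* = 0$ together with $\langle Kv, v\rangle_* = 1$ identifies $\mu' = \langle L_1 v, v\rangle_* = \mu$, giving \eqref{eq:v-prae-eigenvalue-eq} with $\beta := \tilde\beta$.
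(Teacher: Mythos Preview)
Your proof is correct, but it takes a genuinely different route from the paper's argument.

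The paper exploits the spectral structure of $L_1$ directly: since $\inf\sigma_\rmess(L_1)>0$, there is a decomposition $\Lambda=V^-\oplus V^+$ with $\dim V^-<\infty$ on which the $L_1$-form is respectively nonpositive and uniformly positive. A minimizing sequence is normalized so that $\|v_n^-\|=1$; compactness of $V^-$ gives convergence of $v_n^-$, the sign condition $\langle L_1 v_n,v_n\rangle_*<0$ then bounds $v_n^+$, and one concludes by weak lower semicontinuity of both the numerator and the denominator. Finiteness of $\mu$ is obtained only a posteriori, as a byproduct of attainment.

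You instead work through $L_2$: the identity $L_1=L_2-M$ with $M=2g'(\varphi^2)\varphi^2$ bounded, together with the spectral calculus of $L_2|_\Lambda$, gives an explicit lower bound $\mu\ge-\|M\|_\infty^2$ and, via Cauchy--Schwarz in the spectral integral, an a priori $L^2$-bound on a minimizing sequence normalized by $\langle Kv_n,v_n\rangle_*=1$. The passage to the limit is then handled by a Br\'ezis--Lieb style splitting, using the decay of $M$, $f'(\varphi)$ and $g(\varphi^2)$ to kill the interaction terms and a short sign argument ($\gamma\le\mu\beta$ with $\mu<0$, $\beta,\gamma\ge0$) to rule out loss of mass. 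This is more hands-on and yields the quantitative bound on $\mu$ for free; the paper's argument is shorter and uses only the qualitative spectral information $\inf\sigma_\rmess(L_1)>0$ without ever needing to relate $L_1$ and $L_2$.
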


\begin{proof}
  Since $\varphi$ has positive Morse index with respect to
  $\Phi |_{\Sigma_\alpha}$, there exists
  $v \in \Lambda \ssm \{0\}$ with
  $\scp{L_1 v ,v}_* < 0$, which implies that
  $\mu<0$. In the following, we consider the spectral
  decomposition
  \begin{equation*}
    \Lambda = V^- \oplus V^+
  \end{equation*}
  with the properties that $\dim V^- <\infty$ and
  \begin{equation}
    \label{eq:V+--prop}
    \scp{L_1 v, v}_* \le 0,\quad \scp{L_1 w, w}_*  \ge \delta \norm{w}^2, \quad \scp{L_1 v, w}_* = 0 \qquad \text{for $v \in V^-$, $w \in V^+$}
  \end{equation}
  with some $\delta>0$. The existence of such a decomposition
  follows from the fact that
  $\inf \sigma_\rmess (L_1)= \inf \sigma_\rmess(-\Delta +
  V-\lambda)>0.$ For $v \in \Lambda$, we now write $v= v^- + v^+$ with
  $v^- \in V^-$, $v^+ \in V^+$.  Let
  $(v_n)_n \subset \Lambda \ssm \{0\}$ be a minimizing
  sequence for the quotient
  \begin{equation*}
    v \mapsto q(v):= \frac{\scp{L_1 v, v}_*}{\scp{I
      L_2^{-1}Iv,v}_*}.
  \end{equation*}
  Since
  $\mu = \inf _{v \in \Lambda \ssm \{0\}} q(v)<0$, we
  may assume that
  \begin{equation}
    \label{eq:L1-splitting}
    \scp{L_1 v_n, v_n}_* = \scp{L_1 v_n^-, v_n^-}_* + \scp{L_1 v_n^+, v_n^+}_* < 0 \qquad \text{for all $n \in \dN$.}
  \end{equation}
  Thus $v_n^- \not = 0$, and we may assume that $\norm{v_n^-}=1$
  for all $n \in \dN$.  Since $V^-$ is finite dimensional, we may
  pass to a subsequence such that $v_n^- \to v_- \in V^-$ with
  $\norm{v_-}=1$. Then \eqref{eq:V+--prop} and
  \eqref{eq:L1-splitting} imply that
  \begin{equation*}
    \delta \limsup_{n \to \infty}\norm{v_n^+}^2 \le \limsup_{n \to
      \infty} \scp{L_1 v_n^+, v_n^+}_* \le - \lim_{n \to
      \infty} \scp{L_1 v_n^-, v_n^-}_* = -\scp{L_1 v_-,
    v_-}_*
  \end{equation*}
  and thus $v_n^+$ is bounded in $H^1$ as well. Hence
  $(v_n)_n \subset \Lambda$ is bounded in $H^1$, and we may thus
  pass to a subsequence such that
  \begin{align*}
    &v_n^+ \rightharpoonup v_+,   \qquad \qquad v_n \rightharpoonup v: = v_- + v_+\, \in \, \Lambda \ssm \{0\},\\
    &\scp{L_1 v_n, v_n}_* \to \kappa_1 \le 0 \qquad
    \text{and} \qquad \scp{I L_2^{-1} I v_n, v_n}_* \to
    \kappa_2 \ge 0
  \end{align*}
  as $n \to \infty$. By weak lower semicontinuity, we then have
  \begin{equation*}
    \scp{L_1 v_+, v_+}_* \le \lim_{n \to \infty} \scp{
    L_1 v_n^+, v_n^+}_* = \kappa_1 -\scp{L_1 v_-, v_-
}_*
  \end{equation*}
  and thus
  \begin{equation*}
    \scp{L_1 v, v}_* \le \kappa_1 \le 0.
  \end{equation*}
  Consequently, since also
  \begin{equation*}
    0< \scp{I L_2^{-1}I v, v}_* \le \kappa_2
  \end{equation*}
  by \th\ref{lem:orbital-instability-easy} and weak
  lower semicontinuity, we find that
  \begin{equation*}
    q(v) = \frac{\scp{L_1 v, v}_*}{\scp{I L_2^{-1}I v, v
}_*} \le \frac{\scp{L_1 v, v}_*}{\kappa_2} \le
    \frac{\kappa_1}{\kappa_2} = \mu.
  \end{equation*}
  Hence $v$ is a minimizer of $q$ in $\Lambda \ssm \{0\}$,
  and therefore $q(v)=\mu> -\infty$.  Moreover, $v$ minimizes the
  functional
  \begin{equation*}
    \Lambda \to \dR, \qquad w \mapsto \scp{L_1 w-\mu I L_2^{-1} I
    w,w}_*
  \end{equation*}
  and therefore we have
  \begin{equation*}
    \scp{L_1 v - \mu I L_2^{-1} Iv , w}_* = 0 \qquad
    \text{for all $w \in \Lambda$.}
  \end{equation*}
  This implies that there exists $\beta \in \dR$ such that
  \begin{equation*}
    \scp{L_1 v - \mu I L_2^{-1} Iv , w}_* = \beta
    \int_{\dR} \varphi w \qquad \text{for all $w \in H^1$,}
  \end{equation*}
  i.e.,
  \begin{equation*}
    L_1 v - \mu I L_2^{-1} I v = \beta I \varphi \qquad \text{in
      $H^{-1}$,}
  \end{equation*}
  which gives \eqref{eq:v-prae-eigenvalue-eq}.
\end{proof}

\begin{proof}[Proof of \th\ref{prop:orbital-instability}
  (completed)]
  Let $\mu$ and $v$ be as in
  \th\ref{lem:properties-h-lambda-2}, let
  $\rho = \sqrt{-\mu}>0$, and consider
  \begin{equation*}
    \mathbf{w}= \binom{v}{-\rho L_2^{-1} I v + \rho^{-1}\beta
      \varphi} \in \cH \ssm \{0\}.
  \end{equation*}
  Then we have
  \begin{equation*}
    \mathbf{M} \mathbf{w}= \left(\begin{array}{cc}
        0&-L_2\\
        L_1&0
      \end{array}
    \right) \mathbf{w} = \binom{\rho I v}{\mu I L_2^{-1} I v + I
      \beta \varphi} = \rho I \mathbf{w},
  \end{equation*}  
  so $\mathbf{w} \in \cH$ is an eigenfunction of $\mathbf{M}$
  corresponding to the eigenvalue $\rho>0$.
\end{proof}

\appendix
\section{Proof of \eqref{eq:free-morse-u-eps}}
\label{sec:append-proof-refeq:f}
In this section we compute the free Morse index of the rescaled
single peak solutions $u_\eps$ of \eqref{eq:66-1} studied in
Section~\ref{sec:example}. More precisely, we will prove the
equality \eqref{eq:free-morse-u-eps} for $\eps>0$ small. We
continue to use the notation from
Section~\ref{sec:example}. Recall that since $u_\varepsilon$ is a
critical point of $\Phi_\varepsilon^*$ on
$\sum_{\abs{u_\varepsilon}_2^2}$ with Lagrange multiplier $0$,
the free Morse index coincides with the Morse index of
$u_\varepsilon$ as a critical point of $\Phi_\varepsilon^*$ in
$H^1$.  Recall moreover that $u_\eps$ has a unique local maximum
point $x_\eps$, where $x_\eps \to 0$ as $\eps \to 0$ by
\cite[Proposition 5.2]{MR1956951}. Put
\begin{equation*}
  u_{0,\eps}:= u_0( \cdot - x_\eps) = \cT_{x_\eps} u_0 \in H^1
  \qquad \text{for $\eps \in (0,\eps_0)$.}
\end{equation*}
We first need the following refined convergence estimate:
\begin{equation}
  \label{eq:H2-conv-unif-refined}
  \norm{u_{0,\eps} - u_\eps}_{H^2} = O(\eps^2) \qquad \text{as $\eps \to 0$.}
\end{equation}
Suppose by contradiction that this is false, then along a
sequence $(\eps_n)_n \subset (0,\eps_0)$ with $\eps_n \to 0$ we
have
$d_n:= \norm{u_{0,\eps_n} - u_{\eps_n}}_{H^2} \ge n \eps_n^2$ for
all $n \in \dN$. Put $w_n:= \frac{u_{0,\eps_n}-u_{\eps_n}}{d_n}$;
then $w_n$ is a weak solution of the equation
\begin{equation}
  \label{eq:w_n}
  -\Delta w_n + w_n = \frac{1}{d_n} \Bigl( u_{0,\eps_n}^{p-1} -u_{\eps_n}^{p-1} + (V_{\eps_n}-1)u_{\eps_n}\Bigr)= \tau_n w_n + \frac{V_{\eps_n}-1}{d_n}u_{\eps_n}
\end{equation}
with
\begin{equation*}
  \tau_n(x)= (p-1) \int_0^1 [(1-s)u_{0,\eps_n} +
  su_{\eps_n}]^{p-2}\dint s.
\end{equation*}
We pass to a subsequence such that $w_n \weakto w$ in
$H^2$. Since $\tau_n \to (p-1) u_0^{p-2}$ as $n \to \infty$
uniformly in $\dR^N$ by \eqref{eq:H2-conv-unif}, and since
\begin{equation}
  \label{eq:est-v-d-n}
  \xabs{ \frac{V_{\eps_n}-1}{d_n}u_{\eps_n}(x)} \le \frac{c}{n} \abs{x}^2 \rme^{-\alpha \abs{x}} \qquad \text{for $x \in \dR^N$, $n \in \dN$ with constants $c,\alpha>0$}
\end{equation}
by \eqref{eq:uniform-exp} and \eqref{eq:Veps-limit-1}, we may
pass to the limit in \eqref{eq:w_n} to see that $w$ is a (weak)
solution of the equation
\begin{equation*}
  -\Delta w + w - (p-1)u_0^{p-2} w = 0.
\end{equation*}
Consequently,
$w = \sum _{\ell=1}^N a_\ell \partial_\ell u_0 $ with
$\ell =1,\dots,N$. However, since both $u_{0,\eps_n}$ and
$u_{\eps_n}$ attain a maximum at $x_{\eps_n}$, we infer from
\eqref{eq:w_n} and elliptic regularity that
\begin{equation*}
  0= \lim_{n \to \infty} \partial_j w_n(x_{\eps_n})= \partial_j
  w(0)= \sum_{\ell=1}^N a_\ell \partial_{\ell j} u_0(0) \qquad
  \text{for $j=1,\dots,N$.}
\end{equation*}
It is well known that $0$ is the only maximum point of $u_0$,
see, e.g., \cite[Lemma~1(b)]{MR94b:35105}.
Considering that $u_0(x)=U_0(\abs{x})$, where $U_0$ is the
solution with initial values $U_0(0)=u_0(0)$ and $U_0'(0)=0$ of
the ordinary differential equation on $[0,\infty)$ corresponding
to radial solutions of \eqref{eq:limit-equation}, and considering
the uniqueness of solutions to that ODE, it is clear that $0$ is
a nondegenerate maximum point for $u_0$.  Hence it follows that
$a_1,\dots,a_N=0$ and thus $w=0$. This implies that $w_n \to 0$
in $L^2_{\rmloc}(\dR^N)$, and thus
\begin{equation*}
  -\Delta w_n + w_n = o(1) \qquad \text{in $L^2(\dR^N)$}
\end{equation*}
by \eqref{eq:w_n}, \eqref{eq:est-v-d-n}, and since $\tau_n$ has
exponential decay in $x$, uniformly in $n$.  The boundedness of the
inverse of $-\Delta+1$ on $L^2$ implies that
$\norm{w_n}_{H^2}\to0$, contrary to the definition of
$w_n$. Hence \eqref{eq:H2-conv-unif-refined} follows.

We now consider the uniformly bounded families of linear
operators
\begin{equation*}
  A_\eps := \rmD^2 \Phi_\eps^*(u_\eps)   \in  \cL(H^1)
\end{equation*}
and
\begin{equation*}
  C_\eps := \cT_{-x_\eps} \circ A_\eps \circ \cT_{x_\eps}  \in  
  \cL(H^1), \qquad \eps \in (0,\eps_0).
\end{equation*}
Here, as before, the symbol $\rmD^2$ denotes the derivative of the gradient
with respect to the scalar product $\scp{\cdot,\cdot}_{H^1}$.  The
quadratic form associated with $A_\eps$ is given by
\begin{equation}
  \label{eq:quadratic-A-eps}
  \scp{A_\eps v, w}_{H^1} = \int_{\dR^N} \bigl( \nabla v \cdot \nabla w
  + [V_\eps - (p-1)u_\eps^{p-2}] v w\bigr) \qquad \text{for $v,w \in H^1$.}
\end{equation}
It is then clear that $A_\eps$ and $C_\eps$ share the same
spectrum.  We have
\begin{equation}
  \label{eq:strong-convergence-c-eps}
  \lim_{\eps \to 0} \norm{C_\eps v  - B^0 v}_{H^1} =\lim_{\eps \to 0} \norm{A_\eps v  - B^0 v}_{H^1} = 0  \qquad \text{for all $v \in H^1$,}
\end{equation}
where, as before, $B^0= \rmD^2 \Phi_0^*(u_0) \in \cL(H^1)$, and
the convergence is uniform on compact subsets of $H^1$. We claim
that
\begin{equation}
  \label{eq:C-eps-partial-der-0}
  \norm{C_\eps \partial_i u_0}_{H^1} = O(\eps^2) \qquad \text{for $i=1,\dots,N$,}
\end{equation}
and that
\begin{equation}
  \label{eq:C-eps-partial-der}
  \scp{C_\eps \partial_i u_0, \partial_j u_0}_{H^1}
  =\frac12 \eps^2  \partial_{ij}  V(0)  \abs{u_0}_2^2 +o(\eps^2) \qquad \text{for $i,j=1,\dots,N$} 
\end{equation}
as $\eps \to 0$. For this we recall that $\partial_i u_\eps$
solves the equation
\begin{equation}
  \label{eq:partial-j-equation}
  -\Delta (\partial_i u_{\eps}) + V_\eps \partial_j u_\eps  - (p-1)u_\eps^{p-2}\partial_j u_\eps= -  u_\eps \partial_j V_{\eps},
\end{equation}
and therefore \eqref{eq:uniform-exp} and \eqref{eq:Veps-limit-2}
yield
\begin{equation}
  \begin{aligned}
    A_\eps \partial_i u_{\eps} = (-\Delta + 1)^{-1}\Bigl( -\Delta (\partial_i u_{\eps}) &+ V_\eps \partial_i u_\eps  - (p-1)u_\eps^{p-2}\partial_i u_\eps\Bigr) \\
    &= - (-\Delta + 1)^{-1} u_\eps \partial_j V_{\eps} =
    O(\eps^2) \qquad \text{in $H^1$.}
  \end{aligned}
  \label{eq:A-eps}
\end{equation}
Combining this with \eqref{eq:H2-conv-unif-refined}, we find that
\begin{equation*}
  \norm{C_\eps \partial_i u_0}_{H^1} = \norm{A_\eps \partial_i u_{0,\eps}}_{H^1} =
  \norm{A_\eps \partial_i u_{\eps}}_{H^1} + O(\eps^2) = O(\eps^2),
\end{equation*}
as claimed in \eqref{eq:C-eps-partial-der-0}. To see
\eqref{eq:C-eps-partial-der}, we note that
\begin{equation}
  \begin{aligned}
    \scp{& C_\eps \partial_i u_0, \partial_j u_0}_{H^1}
    = \scp{A_\eps \partial_i u_{0,\eps}, \partial_j u_{0,\eps}}_{H^1} \\
    &= \scp{A_\eps \partial_i u_{\eps}, \partial_j u_{\eps}}_{H^1}
    + \scp{A_\eps \partial_i u_{0,\eps}, \partial_j
      (u_{0,\eps}-u_{\eps})}_{H^1} + \scp{A_\eps \partial_j
      u_{\eps}, \partial_i (u_{0,\eps}-u_{\eps})}_{H^1},
  \end{aligned}
\label{eq:C-eps-partial-der-1}
\end{equation}
where, since $\partial_ i u_{0,\eps}$ satisfies
$-\Delta \partial_ i u_{0,\eps} + \partial_ i u_{0,\eps} -
(p-1)u_{0,\eps}^{p-2} \, \partial_ i u_{0,\eps} =0$ in $\dR^N$,
\begin{equation*}
  \scp{A_\eps \partial_i u_{0,\eps}, \partial_j
  (u_{0,\eps}-u_{\eps})}_{H^1} = \int_{\dR^N} \bigl [V_\eps-1+
  (p-1) (u_{0,\eps}^{p-2}-u_\eps^{p-2})\bigr] \partial_i
  u_{0,\eps}\,
  \partial_j (u_{0,\eps}-u_{\eps})= o(\eps^2)
\end{equation*}
as $\eps \to 0$. Here, in the last step, we used
\eqref{eq:H2-conv-unif-refined} together with the fact that
\begin{equation*}
  \norm{[V_\eps-1+ (p-1)
    (u_{0,\eps}^{p-2}-u_\eps^{p-2})\bigr] \partial_i
    u_{0,\eps}}_{L^2} \to 0 \qquad \text{as $\eps \to 0$.}
\end{equation*}
Moreover,
\begin{equation*}
  \abs{\scp{A_\eps \partial_j u_{\eps}, \partial_i
    (u_{0,\eps}-u_{\eps})}_{H^1}} \le \norm{ A_\eps \partial_j
    u_{\eps}}_{H^1} \norm{\partial_i (u_{0,\eps}-u_{\eps})}_{H^1} \le O(\eps^4)
\end{equation*}
by \eqref{eq:H2-conv-unif-refined} and \eqref{eq:A-eps}.
Inserting these estimates in \eqref{eq:C-eps-partial-der-1} and
using \eqref{eq:partial-j-equation} once more, together with
\eqref{eq:uniform-exp}, \eqref{eq:H2-conv-unif}, and
\eqref{eq:Veps-limit-2} we find that
\begin{align*}
  \scp{C_\eps \partial_i u_0, \partial_j u_0}_{H^1}&= \scp{
  A_\eps \partial_i u_{\eps}, \partial_j u_{\eps}}_{H^1}+
  o(\eps^2)
  = - \int_{\dR^N} u_{\eps} \partial_i V_{\eps}   \, \partial_j u_{\eps}+o(\eps^2)\\
  &= - \eps^2 \sum_{\ell = 1}^N \partial_{i\ell} V(0)
  \int_{\dR^N} x_\ell u_0 \, \partial_j u_0\dint x +o(\eps^2)
  = \frac12\eps^2 \partial_{ij} V(0) \abs{u_0}_2^2 +o(\eps^2).
\end{align*}
In the last step we have integrated by parts again. This yields
\eqref{eq:C-eps-partial-der}.

To conclude the proof of \eqref{eq:free-morse-u-eps}, we now put
$X= \opspan(u_0)$, $Y:= \opspan(\partial_1 u_0,\dots, \partial_N u_0)$, and we
let $Z$ denote the $\scp{\cdot,\cdot}_{H^1}$-orthogonal complement of
$X \oplus Y$ in $H^1$. We then have the
$\scp{\cdot,\cdot}_{H^1}$-orthogonal decomposition
$H^1= X \oplus Y \oplus Z$, and we let $P_X,P_Y,P_Z \in \cL(H^1)$
denote the corresponding orthogonal projections onto $X$, $Y$,
and $Z$.  It then follows from \eqref{eq:C-eps-partial-der-0}
that
\begin{equation}
  \label{eq:P-Y-estimate}
  \norm{C_\eps P_Y}_{\cL(H^1)} = O(\eps^2) \qquad \text{as $\eps \to 0$.}
\end{equation}
Moreover, by the remarks before \th\ref{lem:criterion}, there
exists $0<\delta<1$ such that
\begin{equation}
  \label{eq:unperturbed-est-spectral}
  \scp{B^0 u_0, u_0}_{H^1} \le - \delta \qquad \text{and}\qquad \scp{B^0 w, w}_{H^1} \ge \delta \norm{w}_{H^1}^2\quad \text{for all $w \in Z$.}
\end{equation}
It then follows from \eqref{eq:strong-convergence-c-eps} that
\begin{equation}
  \label{eq:lower-min-max-0}
  \scp{C_\eps u_0, u_0}_{H^1} < - \frac{\delta}{2} \qquad \text{for $\eps>0$ sufficiently small.}
\end{equation}
We also claim that
\begin{equation}
  \label{eq:upper-min-max-0}
  \inf_{w \in Z,\norm{w}_{H^1}=1} \scp{C_\eps w, w}_{H^1} > \delta_+ := \frac{1}{2}\min \{\delta, \inf_{\dR^N} V\} \qquad \text{for $\eps>0$ sufficiently small.}
\end{equation}
Indeed, suppose by contradiction there exist
$\eps_n \in (0,\eps_0)$ and $w_n \in Z$ with $\norm{w_n}_{H^1}=1$ for
$n \in \dN$ such that $\eps_n \to 0$ as $n \to \infty$ and
\begin{equation}
  \label{eq:upper-min-max-0-contra}
  \scp{C_{\eps_n} w_n, w_n}_{H^1} \le \delta_+ \qquad \text{as $n \to \infty$.}
\end{equation}
Passing to a subsequence, we may then assume that $w_n \weakto w$
in $H^1$ with $w \in Z$. We put
$\tilde w_n := \cT_{x_{\eps_n}} w_n = w_n (\cdot - x_{\eps_n})$
for $n \in \dN$, then also $\tilde w_n \weakto w$, and we may
pass to a subsequence such that $\tilde w_n \to w$ in
$L^2_{\loc}(\dR^N)$ and $\tilde w_n \to w$ pointwise a.e. on
$\dR^N$. By \eqref{eq:uniform-exp} and \eqref{eq:H2-conv-unif}
this implies that
\begin{equation}
  \label{eq:loc-conv-w}
  \int_{\dR^N}u_{\eps_n}^{p-2} \tilde w_n^2  \to \int_{\dR^N}u_{0}^{p-2} w^2  \qquad \text{as $n \to \infty$.} 
\end{equation}
We also have that
\begin{equation*}
  \int_{\dR^N} \!\bigl(\abs{\nabla (\tilde w_n-w)}^2 +
  V_{\eps_n}(\tilde w_n-w)^2\bigr) = \!o(1)+ \int_{\dR^N}\!
  \bigl(\abs{\nabla \tilde w_n}^2 - \abs{\nabla w}^2 + V_{\eps_n}[\tilde
  w_n^2- w^2 - 2(\tilde w_n-w)w] \bigr),
\end{equation*}
where, since $\abs{\tilde w_n-w} \weakto 0$ in $L^2(\dR^N)$,
\begin{equation*}
  \Bigabs{\int_{\dR^N} V_{\eps_n} (\tilde w_n-w) w } \le
  \norm{V}_{L^\infty(\dR^N)} \int_{\dR^N} \abs{\tilde w_n-w} \abs{w}  \to
  0 \qquad \text{as $n \to \infty$.}
\end{equation*}
Moreover,
\begin{equation*}
  \int_{\dR^N} V_{\eps_n}w^2 \to \int_{\dR^N} w^2 \qquad
  \text{as $n \to \infty$}
\end{equation*}
by \eqref{eq:Veps-limit-1} and Lebesgue's theorem. Consequently,
\begin{align*}
  \int_{\dR^N} &\bigl(\abs{\nabla \tilde w_n}^2 + V_{\eps_n}\tilde w_n^2\bigr)\\
  &= \int_{\dR^N} (\abs{\nabla w}^2 + w^2) + \int_{\dR^N} \bigl(\abs{\nabla (\tilde w_n-w)}^2 + V_{\eps_n} (\tilde w_n-w)^2\bigr)+o(1)\\
  &\ge \norm{w}_{H^1}^2 + \min \{1, \inf_{\dR^N} V\} \norm{\tilde
    w_n-w}_{H^1}^2+o(1)
  \ge\norm{w}_{H^1}^2 + 2\delta_+ \norm{\tilde
    w_n-w}_{H^1}^2+o(1) ,
\end{align*}
and together with \eqref{eq:quadratic-A-eps},
\eqref{eq:unperturbed-est-spectral} and \eqref{eq:loc-conv-w}
this implies that
\begin{align*}
  \scp{&C_{\eps_n} w_n, w_n}_{H^1}
  = \scp{A_{\eps_n}  \tilde w_n, \tilde w_n}_{H^1} \ge \scp{B^0 w,w}_{H^1} +
  2\delta_+  \norm{\tilde w_n-w}_{H^1}^2+o(1)\\
  & \ge 2\delta_+ \norm{w}_{H^1}^2 + 2 \delta_+ \norm{\tilde w_n-w}_{H^1}^2+o(1)
  = 2 \delta_+\norm{w_n}_{H^1}^2+ o(1)
  = 2 \delta_+ +o(1).
\end{align*}
This contradicts \eqref{eq:upper-min-max-0-contra}, and hence
\eqref{eq:upper-min-max-0} follows.

In the following, we let $M \in \dR^{N \times N}$ denote the
Hessian of the potential $V$ at $0$ which is nondegenerate by
assumption. Then there exists a basis of eigenvectors
$b^1,\dots,b^N \in \dR^N$ of $M$ corresponding to the eigenvalues
$\mu_1 \le \dots \le \mu_N$, where
\begin{equation*}
  \mu_{i}<0 \quad \text{for $i \le m_V$}\quad \text{and}\quad
  \mu_{i}>0 \quad \text{for $i > m_V$.}
\end{equation*}
We then let
$w^1,\dots,w^N \in \opspan(\partial_1 u_0,\dots, \partial_N u_0)$ be
defined by
\begin{equation*}
  w^i:= \sum_{j=1}^N b^i_j \partial_j u_0 \qquad \text{for
    $i=1,\dots,N$,}
\end{equation*}
and we define the subspaces $\tilde Y_\pm \subset Y$ by
\begin{equation*}
  \tilde Y_-:=\opspan(w^1,\dots,w^{m}) \qquad \text{and}\qquad \tilde
  Y_+:=\opspan(w^{m+1},\dots,w^N).
\end{equation*}
By \eqref{eq:C-eps-partial-der} and construction, there exists
$\tilde \delta>0$ such that for $\eps>0$ sufficiently small we
have
\begin{equation}
  \label{eq:tilde-delta}
  \scp{C_{\eps} w, w}_{H^1} \le - \tilde \delta \eps^2 \norm{w}_{H^1}^2  \text{ for $w \in \tilde Y_-$}\quad \text{and}\quad \scp{C_{\eps} w, w}_{H^1} \ge  \tilde \delta \eps^2 \norm{w}_{H^1}^2  \text{ for $w \in \tilde Y_+$.}  
\end{equation}
We now consider the spaces
\begin{equation*}
  \tilde X:= \opspan(u_0) \oplus \tilde Y_- \qquad \text{and}\qquad \tilde
  Z:= Z \oplus \tilde Y_+.
\end{equation*}
Then \eqref{eq:free-morse-u-eps} follows once we have shown that
\begin{equation}
  \label{eq:lower-min-max}
  \sup_{w \in \tilde X,\norm{w}_{H^1}=1} \scp{C_\eps w, w}_{H^1} <0
\end{equation}
and
\begin{equation}
  \label{eq:upper-min-max}
  \inf_{w \in \tilde Z,\norm{w}_{H^1}=1} \scp{C_\eps w, w}_{H^1} >0
\end{equation}
for $\eps> 0$ sufficiently small.  We only show
\eqref{eq:upper-min-max}, the proof of \eqref{eq:lower-min-max}
is very similar but simpler. Suppose by contradiction that
\eqref{eq:upper-min-max} does not hold true for $\eps> 0$ sufficiently
small. Then there exist $\eps_n \in (0,\eps_0)$ and
$w_n \in \tilde Z$ with $\norm{w_n}_{H^1}=1$ for $n \in \dN$ such that
$\eps_n \to 0$ as $n \to \infty$ and
\begin{equation}
  \label{eq:upper-min-max-contra}
  \scp{C_{\eps_n} w_n, w_n}_{H^1} \le 0 \qquad \text{as $n \to \infty$.}
\end{equation}
With $w_n^1:= P_Z w_n \in Z$ and
$w_n^2 := P_Y w_n \in \tilde Y^+$ we have, by
\eqref{eq:P-Y-estimate}, \eqref{eq:upper-min-max-0} and
\eqref{eq:tilde-delta},
\begin{align*}
  \scp{C_{\eps_n} w_n, w_n}_{H^1}&= \scp{C_{\eps_n} w_n^1
  ,w_n^1}_{H^1} + \scp{C_{\eps_n} w_n^2 , w_n^2}_{H^1} +
  2  \scp{C_{\eps_n} w_n^2  , w_n^1}_{H^1}\\
  &\ge \delta_+ \norm{w_n^1}_{H^1}^2 + \tilde \delta \norm{w_n^2 }_{H^1}^2
  \eps_n^2 + O(\norm{w_n^1}_{H^1} \eps_n^2).
\end{align*}
Passing to a subsequence, we may assume that either
$\norm{w_n^1}_{H^1} \to 0$ and $\norm{w_n^2}_{H^1} \to 1$ as $n \to \infty$,
or that $\norm{w_n^1}_{H^1} \ge c$ for some constant $c>0$ and all
$n \in \dN$. In the first case, we deduce that
\begin{equation*}
  \scp{C_{\eps_n} w_n, w_n}_{H^1} \ge \tilde \delta \eps_n^2 +
  o(\eps_n^2)
\end{equation*}
and in the second case we obtain that
\begin{equation*}
  \scp{C_{\eps_n} w_n, w_n}_{H^1} \ge \delta_+ c^2 + o(1)
\end{equation*}
as $n \to \infty$.  In both cases we arrive at a contradiction
to \eqref{eq:upper-min-max-contra}, and thus
\eqref{eq:upper-min-max} is proved. As remarked before,
\eqref{eq:lower-min-max} is obtained similarly by using
\eqref{eq:lower-min-max-0} and the first inequality in
\eqref{eq:tilde-delta}. The proof of \eqref{eq:free-morse-u-eps}
is thus finished.

\def\cprime{$'$} \def\polhk#1{\setbox0=\hbox{#1}{\ooalign{\hidewidth
  \lower1.5ex\hbox{`}\hidewidth\crcr\unhbox0}}}
\providecommand{\bysame}{\leavevmode\hbox to3em{\hrulefill}\thinspace}
\providecommand{\MR}{\relax\ifhmode\unskip\space\fi MR }
\providecommand{\MRhref}[2]{%
  \href{http://www.ams.org/mathscinet-getitem?mr=#1}{#2}
}
\providecommand{\href}[2]{#2}

\end{document}